\numberwithin{equation}{section}
\newtheorem{theorem}{Theorem}[section]
\newtheorem{lemma}[theorem]{Lemma}
\newtheorem{proposition}[theorem]{Proposition}
\newtheorem{corollary}[theorem]{Corollary}
\newtheorem{definition}[theorem]{Definition}
\newtheorem{remark}[theorem]{Remark}
\numberwithin{equation}{section}
\def\XXint#1#2#3{{\setbox0=\hbox{$#1{#2#3}{\int}$ }
\vcenter{\hbox{$#2#3$ }}\kern-.6\wd0}}
\def\dashint{\Xint-}
\newcommand{\bd}{\operatorname{BD}}
\newcommand{\bv}{\operatorname{BV}}
\newcommand{\ld}{\operatorname{LD}}
\newcommand{\dif}{\operatorname{d}\!}
\newcommand{\tr}{\operatorname{Tr}}
\newcommand{\R}{\mathbb{R}}
\newcommand{\locc}{\operatorname{loc}}
\newcommand{\dista}{\operatorname{dist}}
\newcommand{\curl}{\operatorname{curl}}
\newcommand{\ball}{\operatorname{B}}
\newcommand{\sym}{\operatorname{sym}}
\newcommand{\sobo}{\operatorname{W}}
\newcommand{\lebe}{\operatorname{L}}
\newcommand{\hold}{\operatorname{C}}
\newcommand{\besov}{\operatorname{B}}
\newcommand{\bmo}{\operatorname{BMO}}
\newcommand{\sg}{\bm{\varepsilon}}
\newcommand{\gm}{\operatorname{GM}}
\newcommand{\E}{\operatorname{E}\!}
\newcommand{\D}{\operatorname{D}\!}
\newcommand{\di}{\operatorname{div}}
\newcommand{\sgn}{\operatorname{sgn}}
\newcommand{\trace}{\operatorname{Tr}}
\newcommand{\rsym}{\mathbb{R}_{\sym}^{n\times n}}
\renewcommand{\dashint}{\fint}
\newcommand{\ext}{\operatorname{Ex}}
\newcommand{\mres}{\!\mathbin{\vrule height 1.6ex depth 0pt width
0.13ex\vrule height 0.13ex depth 0pt width 1.1ex}\!}
\begin{document}
\title{SOBOLEV REGULARITY FOR CONVEX FUNCTIONALS ON BD}
\author[F.~Gmeineder]{Franz Gmeineder}
\address[F.~Gmeineder]{University of Bonn, Mathematical Institute, Endenicher Allee 60, 53115 Bonn, Germany}
\author[J.~Kristensen]{Jan Kristensen}
\address[J.~Kristensen]{University of Oxford, Andrew Wiles Building, Radcliffe Observatory Quarter, Woodstock Rd, Oxford OX2 6GG, United Kingdom}
\thanks{
The results of this paper appear as an extended and partially improved version of the results as presented in the first author's doctoral thesis \cite{Gmeineder1}. As such, financial support through the EPRSC is gratefully acknowledged. Moreover, the authors would like to thank Gregory Seregin for discussions related to this work.}
\maketitle
\begin{abstract}
We establish the first Sobolev regularity and uniqueness results for minimisers of autonomous, convex variational integrals of linear growth which depend on the symmetric rather than the full gradient. This extends the results available in the literature for the BV--setting to the case of functionals whose full gradients are a priori not known to exist as finite matrix--valued Radon measures. 
\end{abstract}
\section{Introduction}
Let $\Omega$ be an open and bounded Lipschitz domain of $\R^{n}$. In this work we study the regularity of minimisers of autonomous convex variational integrals of the form
\begin{align}\label{eq:main}
\mathfrak{F}[u]:=\int_{\Omega}f(\sg(u))\dif x,\qquad u\colon\Omega\to\R^{n},
\end{align}
subject to suitable Dirichlet boundary conditions. Here, $\sg(u):=\frac{1}{2}(\D u + \D^{\mathsf{T}}u)$ denotes the \emph{symmetric gradient} and $f\in\hold^{2}(\R^{n\times n})$ is a variational integrand of \emph{linear growth}. By this we understand that there exist two constants $0<c_{0}\leq c_{1}<\infty$ and $c_{2}\in\R$ such that 
\begin{align}\label{eq:lg}
c_{0}|\xi|-c_{2}\leq f(\xi)\leq c_{1}(1+|\xi|)\qquad\text{for all}\;\xi\in\R_{\sym}^{n\times n}.
\end{align}
Due to the growth condition \eqref{eq:lg}, the functional $\mathfrak{F}$ is well-defined on Dirichlet classes $u_{0}+\sobo_{0}^{1,1}(\Omega;\R^{n})$ for $u_{0}\in\sobo^{1,1}(\Omega;\R^{n})$. However, as opposed to the superlinear, i.e., $p>1$, growth regime, there is \emph{no} Korn inequality on $\lebe^{1}$. This result follows from \emph{Ornstein's Non-Inequality} \cite{Ornstein} (see~\cite{CFM,KirchKrist,Woj} for more recent contributions) and implies that there is \emph{no} constant $C>0$ such that 
\begin{align}\label{eq:Ornstein}
\int_{\Omega}|\D u|\dif x \leq  C\,\int_{\Omega}|\sg(u)|\dif x,
\end{align}
holds for all $u\in \hold_{c}^{1}(\Omega;\R^{n})$. As a consequence, neither $\mathfrak{F}$ nor a suitable relaxation to the space $\bv(\Omega;\R^{n})$ is coercive on these spaces. Note that, as a consequence of non-reflexivity of $\sobo^{1,1}$, even if $\mathfrak{F}$ \emph{had} bounded minimising sequences in $\sobo^{1,1}$, these could not be proven to be weakly precompact in $\sobo^{1,1}$. Hence, in this case, the relaxation to the space of functions of bounded variation would be necessary. 

Basically, the non--validity of estimate \eqref{eq:Ornstein} is a consequence of unboundedness of singular integral operators on $\lebe^{1}$. In this context, the appropriate substitutes are given by the spaces $\ld(\Omega)$ and $\bd(\Omega)$. These consist of all $u\in\lebe^{1}(\Omega;\R^{n})$ for which the distributional symmetric gradients $\sg(u)$ belong to $\lebe^{1}(\Omega;\R_{\sym}^{n\times n})$ or can be represented by a $\R_{\sym}^{n\times n}$--valued finite Radon measure $\E u$ on $\Omega$, respectively; see Section~\ref{sec:bd}. In particular, Ornstein's Non--Inequality  then implies that in general $\bv(\Omega;\R^{n})\subsetneq\bd(\Omega)$ and that the \emph{full} distributional gradients of $\bd$--functions might not even exist as locally finite measures. Hence the chief question which we shall treat in the present work is to find conditions on the variational integrand $f\in\hold^{2}(\R_{\sym}^{n\times n})$ such that minimisers \emph{indeed qualify as elements of $\bv(\Omega;\R^{n})$ or Sobolev spaces $\sobo^{1,p}(\Omega;\R^{n})$}. At present, such results were only available for the full gradient (i.e., $\bv$-) case and, by Ornstein's Non-Inequality, do \emph{not} apply to the situation considered here. Before we turn to a description of our results, we first introduce the concept of minima and the class of integrands we shall work with.
\subsection{Generalised Minimisers}
To define the concept of minimisers we shall work with, let $\widetilde{u}_{0}\in\lebe^{1}(\partial\Omega;\R^{n})$ be a given Dirichlet datum. Since all $\sobo^{1,1}(\Omega;\R^{n})$, $\ld(\Omega)$ and $\bd(\Omega)$ have trace space $\lebe^{1}(\partial\Omega;\R^{n})$ (see Section~\ref{sec:bd} for more detail), we find $u_{0}\in\ld(\Omega)$ such that $\trace(u_{0})=\widetilde{u}_{0}$ $\mathcal{H}^{n-1}$-a.e. on $\partial\Omega$. We hence consider the variational principle 
\begin{align}\label{eq:varprin}
\text{to minimise}\;\;\mathfrak{F}\;\;\text{within a Dirichlet class}\;\mathscr{D}_{u_{0}}:=u_{0}+\ld_{0}(\Omega), 
\end{align}
where $\ld_{0}(\Omega)$ is the closure of $\hold_{c}^{1}(\Omega;\R^{n})$ with respect to $\|v\|_{\ld}:=\|v\|_{\lebe^{1}}+\|\bm{\varepsilon}(v)\|_{\lebe^{1}}$. By virtue of the growth condition \eqref{eq:lg}, it is easy to show that $\inf\mathfrak{F}[\mathscr{D}_{u_{0}}]>-\infty$. In turn, by a standard compactness principle in $\bd$ to be recalled for the reader's convenience in Section~\ref{sec:bd}, a suitable subsequence converges to some $u\in\bd(\Omega)$ in the weak*-sense. 

As $\mathfrak{F}$ given by \eqref{eq:main} is merely defined for elements of $\ld(\Omega)$, it must be relaxed in order to be defined for the weak*-limit $u\in\bd(\Omega)$. Here we take advantage of convexity, thereby reducing to the classical theory of convex functions of measures due to \textsc{Reshetyak} \cite{Reshetnyak}. To capture the behaviour of the integrand at infinity, we hereafter define the
 \emph{recession function} $f^{\infty}$ associated with $f$ by 
\begin{align}\label{eq:recession}
f^{\infty}(\xi):=\lim_{t\searrow 0}tf\left(\xi/t\right),\qquad\xi\in\R_{\sym}^{n\times n}. 
\end{align}
Using convexity and linear growth of $f$, it is easy to show that $f^{\infty}$ is well--defined and convex, too. Let $u\in\bd(\Omega)$ and consider the Lebesgue-Radon-Nikod\'{y}m decomposition of $\E u=\E^{a}u+\E^{s}u$ into its absolutely continuous and singular parts with respect to $\mathscr{L}^{n}$. Then we put 
\begin{align}\label{eq:relaxed}
\begin{split}
\overline{\mathfrak{F}}_{u_{0}}[u]   :=\int_{\Omega}f(\E^{a}u)\dif x & + \int_{\Omega}f^{\infty}\left(\frac{\dif \E^{s}u}{\dif |\E^{s}u|}\right)\dif |\E^{s}u| \\ & + \int_{\partial\Omega}f^{\infty}\big((\trace(u_{0})-\trace(u))\odot\nu_{\partial\Omega}\big)\dif\mathcal{H}^{n-1}
\end{split}
\end{align}
where as usual $\dif \E^{s}u /\dif |\E^{s}u|$ denotes the Lebesgue density of $\E^{s}u$ with respect to its total variation measure $|\E^{s}u|$ and $\nu_{\partial\Omega}$ is the outward unit normal to $\partial\Omega$ (see Section \ref{sec:bd} for the relevant notation). Here, the boundary integral term penalises deviations of $u$ from the Dirichlet data $u_{0}$. Then $\overline{\mathfrak{F}}_{u_{0}}$ coincides with the corresponding weak*-lower semicontinuous envelope\footnote{Here, because $f$ is autonomous, convex with the lower bound of \eqref{eq:lg}, it is irrelevant whether we choose the weak*-- or $\lebe^{1}$--relaxation; indeed, they coincide in this case.} of $\mathfrak{F}$ over $\mathscr{D}_{u_{0}}$. 

We then call $u\in\bd(\Omega)$ a \emph{generalised minimiser} for $\mathfrak{F}$ (with respect to $u_{0}$) if and only if there holds
\begin{align}\label{eq:GMdef}
\overline{\mathfrak{F}}_{u_{0}}[u]\leq \overline{\mathfrak{F}}_{u_{0}}[v]\qquad\text{for all}\;v\in\bd(\Omega). 
\end{align}
The set $\gm(\mathfrak{F};u_{0})$ consists of all generalised minimisers for $\mathfrak{F}$ (with respect to $u_{0}$). For brievity, we shall write $\overline{\mathfrak{F}}$ instead of $\overline{\mathfrak{F}}_{u_{0}}$, tacitly assuming that $u_{0}$ is fixed. Most crucially, we have $\inf \mathfrak{F}[\mathscr{D}_{u_{0}}]=\min\overline{\mathfrak{F}}[\bd(\Omega)]$. Moreover, generalised minima can be conveniently characterised as those maps $v\in\bd(\Omega)$ for which there exists a minimising sequence $(v_{k})\subset\mathscr{D}_{u_{0}}$ that converges strongly to $v$ in $\lebe^{1}(\Omega;\R^{n})$. These results, which are similar to the $\bv$-case but hard to explicitely trace back for the situation at our disposal, shall be collected and demonstrated in the appendix of the paper, cf. Section~\ref{sec:relaxation}.


\subsection{$\mu$--elliptic Integrands}
Throughout the present work we shall further assume that $f\in\hold^{2}(\R_{\sym}^{n\times n})$ is a \emph{$\mu$--elliptic} (and thus, in particular, convex) integrand. Reminiscent of the classical \textsc{Bernstein} genre (cf.~\cite{Serrin}, \cite[Ex.~3.2ff.]{GMS} and the references therein), this notion of ellipticity had been rediscovered and studied in a series of papers by \textsc{Bildhauer, Fuchs and Mingione} \cite{Bild1,Bild2,FuchsMingione,BildhauerFuchsMingione} concerning minimisation problems of the type \eqref{eq:main}, where $\sg$ was replaced by the full gradients and which we recall here for completeness:
\begin{definition}[$\mu$--ellipticity]
Let $1< \mu<\infty$. A $\hold^{2}$--integrand $f\colon \R_{\sym}^{n\times n}\to \R_{\geq 0}$ is called $\mu$--\emph{elliptic} if and only if there exist $0<\lambda\leq\Lambda<\infty$ such that for all $\mathbf{A},\mathbf{B}\in \R_{\sym}^{n\times n}$ there holds
\begin{align}\label{eq:muell}
\lambda\frac{|\mathbf{A}|^{2}}{(1+|\mathbf{B}|^{2})^{\frac{\mu}{2}}}\leq \langle f''(\mathbf{B})\mathbf{A},\mathbf{A}\rangle \leq \Lambda \frac{|\mathbf{A}|^{2}}{(1+|\mathbf{B}|^{2})^{\frac{1}{2}}}.
\end{align} 
\end{definition} 
As a direct consequence of the definition, $\mu$--elliptic integrands are automatically strongly convex. An important class of examples is provided by the one--parameter family of integrands $\{\varphi_{\mu}\}_{\mu >1}$, given by 
\begin{align*}
\varphi_{\mu}(\xi):=\int_{0}^{|\xi|}\int_{0}^{s}\big(1+t^{2}\big)^{-\frac{\mu}{2}}\dif t\dif s,\qquad\xi\in\R^{n\times n}. 
\end{align*}
Then, as shown in \cite[Ex.~3.9 and 4.17]{Bild1}, each $\varphi_{\mu}$ is $\mu$--elliptic. Moreover, for $\mu=3$ we cover the usual area integrand $\langle\cdot\rangle := \sqrt{1+|\cdot|^{2}}$. Let us further note that in the definition of $\mu$--ellipticity the case $\mu=1$ is explicitely excluded. Indeed, it is easy to show that $1$--elliptic $\hold^{2}$--integrands are of $L\log L$--growth and hence beyond the scope of integrands of linear growth. Moreover, by mapping properties of singular integrals of convolution type, minimisers of \eqref{eq:main} with $1$--elliptic $f$ possess full distributional gradients in $\lebe^{1}(\Omega;\R^{n\times n})$. In consequence, when studying regularity properties of such minima, we may directly test with the full difference quotients and hence no modification of the common difference quotient method is required. As a characteristic feature of integrands satisfying \eqref{eq:muell}, we note by $\mu>1$ that the growth behaviour from above and below differs \emph{on the level of second derivatives}, a fact which is not so common for functionals of $p$-growth but rather appears in the theory of functionals of $(p,q)$-growth, cf.~ \cite{Marc1,Marc2,Bild4,ELM,CKP,CKP1}.
\subsection{Background and Main Results}
By our method of proof, we shall distinguish two ellipticity regimes which come along with different obstructions. Before we embark on the description of our main results and novelties, we first summarise the results available so far.   
\subsubsection{Contextualisation}\label{sec:context}
To contextualise our results, let us briefly recall the results known for the full gradient case, that is, where the symmetric gradient in \eqref{eq:varprin} is replaced by the full gradient;  we are hereby lead to the Dirichlet problem in $\bv$. Employing a vanishing viscosity approach in the spirit of \textsc{Seregin} \cite{Seregin1,Seregin2,Seregin3,Seregin4}, \textsc{Bildhauer} \cite{Bild2,Bild1} established the first $\sobo^{1,1}$-regularity result for generalised minima for $\mu\leq 3$. In doing so, crucial use is made of the so-called \emph{local boundedness hypothesis}, namely, that the sequence of vanishing viscosity solutions is uniformly bounded in $\lebe^{\infty}$ on relatively compact subsets $K$ of $\Omega$, cf.~ \cite[Ass.~4.11]{Bild1}, \cite{Bild2} and \cite[Thm.~1.10]{BS1}. Up to date, for \emph{autonomous full gradient functionals} there are no Sobolev regularity results for the Dirichlet problem available beyond $\mu=3$. However, even though \textsc{Bildhauer}'s approach leads to $\sobo_{\locc}^{1,L\log L}$-regularity of \emph{one} particular generalised minimiser (namely, the limit of a special vanishing viscosity sequence) and the integrand is strictly convex, it does not rule out the possible presence of other, more irregular generalised minima. We recall that the recession function $f^{\infty}$ is positively $1$-homogeneous regardless of strict convexity of $f$. This is an obvious source of non-uniqueness, and as long the presence of the singular part of the gradient measures of \emph{all} generalised minima cannot be excluded, there might in fact exist other generalised minima that do not share the $\sobo_{\locc}^{1,L\log L}$-regularity. The uniqueness of generalised minima for the Dirichlet problem on $\bv$ has been addressed by \textsc{Beck \& Schmidt} \cite{BS1}. Here, the authors combine \textsc{Bildhauer}'s approach with the Ekeland variational principle to deduce that \emph{all} generalised minima of $\mu=3$-elliptic variational integralds share the aforementioned regularity. 

However, a common difficulty in deriving a higher differentiability result for functionals of the type \eqref{eq:main} under the linear growth assumption on $f$ is that, by Ornstein's Non--Inequality, the full distributional gradients of $\bd$--functions do not need to exist as Radon measures of finite total mass.  Hence, we shall consider fractional estimates instead and utilise the fact that -- as $\bv$ and $\bd$ embed into the same fractional Sobolev spaces -- $\bd$-maps behave similarly as $\bv$ on the fractional level.
\subsubsection{An unconditional regularity result for small $\mu$}
We begin with the following result which -- because of its restriction to ellipiticities $\mu < \frac{n+1}{n}$ -- applies to \emph{all} generalised minimisers. In effect, it appears as the generalisation of \cite[Sec.~5]{BS1} to problems of the form \eqref{eq:varprin}. 
\begin{theorem}\label{thm:main0}
Let $n\geq 2$. Suppose that $f\in\hold^{2}(\R_{\sym}^{n\times n})$ is a $\mu$-elliptic variational integrand of linear growth, i.e., satisfies \eqref{eq:lg} and \eqref{eq:muell} with $1<\mu<\frac{n+1}{n}$. Then all generalised minimisers belong to $\sobo_{\locc}^{1,1}(\Omega)$. More precisely, we have for some $p=p(\mu,n)>1$
\begin{align}
\gm(\mathfrak{F};u_{0})\subset \sobo_{\locc}^{1,p}(\Omega;\R^{n})\cap\ld(\Omega).
\end{align}
\end{theorem}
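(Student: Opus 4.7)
My plan is to adapt the vanishing viscosity plus Ekeland strategy of \cite{BS1} (developed there for the BV setting) to the present BD framework, compensating for Ornstein's non-inequality by working with fractional estimates on the symmetric gradient. Consider the regularised functionals
$\mathfrak{F}_{\varepsilon}[v] := \int_{\Omega} f(\sg(v)) \dif x + \varepsilon \int_{\Omega} (1 + |\sg(v)|^{2})^{q/2} \dif x$
for a fixed exponent $q > 1$; since $q > 1$ permits Korn's inequality, $\mathfrak{F}_{\varepsilon}$ is coercive and admits unique minimisers $u_{\varepsilon} \in u_{0} + \sobo_{0}^{1,q}(\Omega;\R^{n})$, where $u_{0}$ is first smoothed if necessary. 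Standard compactness and lower semicontinuity arguments show that $(u_{\varepsilon})$ converges in $\lebe^{1}$ to some element of $\gm(\mathfrak{F};u_{0})$. To reach a \emph{prescribed} generalised minimiser $u^{*}$, I would invoke Ekeland's variational principle on an appropriate metric space containing $\gm(\mathfrak{F};u_{0})$ to produce sequences of approximate minimisers of slightly perturbed problems that approximate $u^{*}$ in $\lebe^{1}$.

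The analytic heart of the argument is a uniform fractional estimate on $\sg(u_{\varepsilon})$ derived from difference quotients on the Euler--Lagrange system. Since $\taush$ commutes with $\sg$, applying $\taush$ to the Euler--Lagrange equation and testing with $\taushone(\eta^{2}(u_{\varepsilon}-u_{0}))$ for a cutoff $\eta \in \hold_{c}^{\infty}(\Omega)$ yields, through the lower bound in \eqref{eq:muell}, a weighted estimate of the form
$\int_{\Omega} \eta^{2} (1+|\sg(u_{\varepsilon})|^{2})^{-\mu/2} |\taush \sg(u_{\varepsilon})|^{2} \dif x \leq C|h|$
uniformly in $\varepsilon$ and $h$; the right-hand side uses only the boundedness of $f'$ (from linear growth) and the uniform $\lebe^{1}$-bound on $\sg(u_{\varepsilon})$. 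Splitting the integration domain into sublevel and superlevel sets of $|\sg(u_{\varepsilon})|$ and optimising by H\"older's inequality upgrades this weighted control to a genuine Nikolski bound $\|\taush\sg(u_{\varepsilon})\|_{\lebe^{1}(\Omega')} \leq C|h|^{\alpha}$ with exponent $\alpha = \alpha(\mu,n) > 0$, uniformly on $\Omega' \Subset \Omega$.

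The third step converts this fractional regularity of $\sg(u_{\varepsilon})$ into genuine Sobolev regularity for $u_{\varepsilon}$. The Nikolski bound places $\sg(u_{\varepsilon})$ uniformly in a local $\besov^{\alpha}_{1,\infty}$-class; the Besov embedding $\besov^{\alpha}_{1,\infty} \hookrightarrow \lebe^{p}$ then yields $\sg(u_{\varepsilon}) \in \lebe_{\locc}^{p}$ uniformly for some $p = p(\mu,n) > 1$. This is exactly where the hypothesis $\mu < (n+1)/n$ enters: the exponent $\alpha$ produced in step two depends monotonically on the degeneracy parameter $\mu$, and the strict inequality $\mu < (n+1)/n$ is precisely what guarantees that $\alpha$ exceeds the Sobolev threshold required for a superlinear $p > 1$. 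Since $p > 1$, Korn's inequality on $\lebe^{p}$ delivers uniform local $\sobo^{1,p}$-bounds on $u_{\varepsilon}$, and passing to the limit $\varepsilon \searrow 0$ via lower semicontinuity transfers these bounds to the viscosity limit.

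The principal difficulty is that generalised minimisers are generally non-unique --- since $f^{\infty}$ is merely positively $1$-homogeneous, nothing prevents $\gm(\mathfrak{F};u_{0})$ from containing elements far away from the viscosity limit. The Ekeland principle resolves this by producing, for each $u^{*} \in \gm(\mathfrak{F};u_{0})$, minimisers of $\mathfrak{F}_{\varepsilon}$ plus a small $\lebe^{\infty}$-perturbation that cluster at $u^{*}$; the perturbation contributes only harmless lower-order terms to the Euler--Lagrange system, and the difference quotient estimates above transfer essentially unchanged. Combined with step three, this gives $u^{*} \in \sobo_{\locc}^{1,p}(\Omega;\R^{n}) \cap \ld(\Omega)$, completing the argument.
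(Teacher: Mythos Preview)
Your overall strategy---Ekeland-perturbed viscosity approximation plus difference quotients on the approximate Euler--Lagrange system, followed by Besov embedding---matches the paper's approach. However, there is a substantive gap at the core estimate.

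You claim the right-hand side of the weighted difference-quotient inequality is $C|h|$, uniformly in $\varepsilon$, citing only boundedness of $f'$ and the uniform $\lebe^{1}$-bound on $\sg(u_{\varepsilon})$. But the cross term arising from the cutoff is of the form $\int |\tau_{s,h}f'(\sg(u_{\varepsilon}))|\,|\nabla\eta|\,|\tau_{s,h}u_{\varepsilon}|$, which after bounding $|f'|\leq M$ reduces to controlling $\int_{\spt\nabla\eta}|\tau_{s,h}u_{\varepsilon}|$. In the $\bv$-setting this is $\leq |h|\,\|\D u_{\varepsilon}\|_{\lebe^{1}}$, but here the only uniform a priori bound is in $\ld(\Omega)$, and Ornstein's non-inequality precludes converting this into a uniform $\sobo^{1,1}$-bound. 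The paper therefore invokes the fractional embedding $\ld\hookrightarrow\sobo^{\beta,1}_{\locc}$ (Proposition~\ref{prop:cheapembedding}) to obtain only $\int|\tau_{s,h}\widetilde{u}_{k}|\leq Ch^{\beta}$ for each $\beta<1$, so the right-hand side is $C|h|^{\beta}$ rather than $C|h|$. This loss is not cosmetic: tracking it through the auxiliary function $V_{\alpha}$ and the embedding $\besov^{\beta/2}_{1,\infty}\hookrightarrow\lebe^{2n/(2n-\beta)-\delta}$, the condition for the resulting integrability exponent $(2-\alpha)\big(\tfrac{2n}{2n-\beta}-\delta\big)$ to exceed $1$ forces $(\mu+1)/2<1+1/(2n)$, i.e.\ exactly $\mu<(n+1)/n$. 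Your claimed $|h|$-bound, if it held, would yield a strictly larger ellipticity range; the fractional substitute is what actually produces the threshold in the theorem.

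Two further points the paper handles with care that your sketch elides: (i) the test function is the second-order difference $\tau_{s,h}^{-}\big(\rho^{2}\tau_{s,h}^{+}(u_{k}-b_{k})\big)$ with $b_{k}\in\mathcal{R}(\Omega)$ a rigid deformation chosen via Lemma~\ref{lem:rigidchoose} so that both an $\lebe^{1}$- and an $\lebe^{2}$-type Korn--Poincar\'{e} inequality hold simultaneously---the latter being needed to control the quadratic stabilisation term $\mathbf{III}$; subtracting $u_{0}$ does not achieve this. (ii) The Ekeland principle is applied in the $(\sobo_{0}^{1,\infty})^{*}$-metric (cf.\ Section~\ref{sec:dualdef}), not with an ``$\lebe^{\infty}$-perturbation'': this choice makes the perturbation term in the approximate Euler--Lagrange inequality~\eqref{eq:elmain} weak enough to be estimated via Lemma~\ref{lem:negsob} and thereby absorbed into the available $\ld$-bounds.
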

This theorem will be established in Section~\ref{sec:thmmain0}. As  for the $\bv$-case discussed above, a chief difficulty stems from the fact that the symmetric gradients of generalised minimisers are not a priori known to be absolutely continuous with respect to $\mathscr{L}^{n}$. Consequently, since we do not have uniqueness of generalised minima, a stabilisation procedure relying on the vanishing viscosity approach must be suitably modified. In doing so, we follow essentially the lines of \textsc{Beck \& Schmidt} \cite{BS1}. Here, starting from a given generalised minimiser, we construct a specific minimising sequence that weakly*-converges to the generalised minimiser so that each of its members almost minimises an appropriately stabilised functional. Since this sequence belongs to $\ld$, we are further in position to avoid manipulations on difference quotients of measures. In constructing the aforementioned specific minimising sequence, we make use of the Ekeland variational principle and employ it in the dual space $(\sobo_{0}^{1,\infty})^{*}$ to obtain perturbations that are weak enough to be dealt with using the available a priori-estimates. Higher differentiability estimates can also be obtained, cf.~Corollary~\ref{cor:nikolskii}.

\subsubsection{A result subject to the local $\bmo$-hypothesis.} 
In amplyfing the ellipticity range of Theorem~\ref{thm:main0}, we crucially exploit a slight generalisation of the local boundedness assumption as discussed in Section~\ref{sec:context} above. Namely, we shall suppose that a particular vanishing viscosity sequence is locally uniformly bounded in $\bmo$. In the $\bv$-setting, the local $\lebe^{\infty}$-hypothesis can be justified by use of a maximum principle or, for a class of integrands slightly more general than the radial ones, by use of a Moser iteration approach, cf.~\cite[Thm.~1.11]{BS1}. This, however, is far from clear in the $\bd$-context, cf. Remark~\ref{rem:onlyone} below.

Let us, however, briefly explain the choice of this hypothesis: First, it can be justified for a class of suitably regular $\hold^{2}$-integrands and sufficiently \emph{smooth and small} boundary data, the technical verification of which being slightly beyond the scope of the paper and being deferred to a forthcoming work. Second, it should also prove interesting for the $\bv$-case as a extension of the local $\lebe^{\infty}$-hypothesis.

In comparison with Theorem~\ref{thm:main0}, we can only derive a regularity result for \emph{one} particular generalised minimiser. As a main novelty though, we crucially utilise the fact that the dual solution (in the sense of convex duality) belongs to $\sobo_{\locc}^{1,2}(\Omega;\R_{\sym}^{n\times n})$. This can be established by analogous means as done by \textsc{Seregin} \cite{Seregin1,Seregin3,Seregin4}, and in conjunction with the local $\bmo$-hypothesis, we therefore use the impact of the \emph{dual solution} on the \emph{primal solution}. Indeed, the dual problem is substantially better behaved than the primal one: Whereas the recession part of the relaxed primal functional might lead to non--uniqueness for generalised minimisers provided a too weak ellipticity is assumed for $f$, the dual solution is always unique and enjoys the aforementioned higher Sobolev regularity. However, unlike elliptic variationals of $p$--growth, $p>1$, in our case we may not assume that $\sigma=f'(\sg(u))$, where $u\colon\Omega\to\R^{n}$ is a generalised minimiser. In fact, this would be the case if we knew a priori that $u\in\ld(\Omega)$, but this is not clear at the relevant stage of the proof. Our second main result then reads as follows; note that $u_{0}\in\sobo^{1,2}(\Omega)$ is not necessary but facilitates the statement of the theorem (cf.~ \eqref{eq:viscosityapprox} and the discussion afterwards):
\begin{theorem}\label{thm:main1}
Let $f\in\hold^{2}(\R_{\sym}^{n\times n})$ be a $\mu$-elliptic integrand of linear growth with $\mu<1+\frac{3}{2n}$ and let $u_{0}\in\sobo^{1,2}(\Omega;\R^{n})$. Suppose that the sequence of minimisers $(v_{j})$ of the stabilised functionals 
\begin{align*}
\mathfrak{F}_{j}[v]:=\mathfrak{F}[v] + \frac{1}{2j}\int_{\Omega}|\sg(v)|^{2}\dif x\qquad\text{over}\;u_{0}+\sobo^{1,2}(\Omega;\R^{n})
\end{align*}
are locally uniformly bounded in $\bmo$. Then there exists $u\in\gm(\mathfrak{F};u_{0})$ which arises as the weak*-limit of a suitable subsequence of $(v_{j})$, and this weak*-limit satisfies, for some $1<p<\infty$,
\begin{align}
u\in \ld(\Omega)\cap\sobo_{\locc}^{1,p}(\Omega;\R^{n}).
\end{align} 
\end{theorem}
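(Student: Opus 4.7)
The strategy is a vanishing-viscosity scheme combined with Seregin-type dual estimates and a Moser--Bildhauer--Fuchs bootstrap powered by the local $\bmo$-hypothesis. First, strict convexity and quadratic coercivity of $\mathfrak{F}_{j}$ on $u_{0}+\sobo_{0}^{1,2}(\Omega;\R^{n})$ (via Korn in $\sobo_{0}^{1,2}$) yield a unique minimiser $v_{j}$ satisfying the Euler--Lagrange equation $\di\sigma_{j}=0$ for the dual stress
\[
\sigma_{j}:=f'(\sg(v_{j}))+\tfrac{1}{j}\sg(v_{j}).
\]
Testing with $v_{j}-u_{0}$ and using \eqref{eq:lg} produces the uniform bound $\|\sg(v_{j})\|_{\lebe^{1}}+\tfrac{1}{j}\|\sg(v_{j})\|_{\lebe^{2}}^{2}\leq C$. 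Combined with the Poincar\'e--Korn inequality in $\ld_{0}$ applied to $v_{j}-u_{0}$, this forces $(v_{j})$ to be uniformly bounded in $\bd(\Omega)$; weak$^{*}$--compactness delivers a subsequence with $v_{j}\wstar u$ in $\bd(\Omega)$. Reshetnyak lower semicontinuity of $\overline{\mathfrak{F}}_{u_{0}}$, together with the fact that $\tfrac{1}{2j}\int|\sg(\cdot)|^{2}$ vanishes on any comparison map in $u_{0}+\sobo_{0}^{1,2}$ in the limit, identifies $u\in\gm(\mathfrak{F};u_{0})$ by the standard relaxation argument of Section~\ref{sec:relaxation}.

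The second stage is to establish a uniform $\sobo_{\locc}^{1,2}$-bound for the dual stresses $(\sigma_{j})$, adapting Seregin's scheme to the BD-setting. Differencing the equation $\di\sigma_{j}=0$ and testing with $\tau_{-h}(\varphi^{2}\tau_{h}v_{j})$ for $\varphi\in\hold_{c}^{1}(\Omega)$ produces a Caccioppoli-type inequality whose decisive left-hand side is
\[
\int\varphi^{2}\langle f''(\sg(v_{j}))\tau_{h}\sg(v_{j}),\tau_{h}\sg(v_{j})\rangle\,\dif x+\tfrac{1}{j}\int\varphi^{2}|\tau_{h}\sg(v_{j})|^{2}\,\dif x.
\]
The upper bound $|f''(\xi)|\leq\Lambda(1+|\xi|^{2})^{-1/2}$ of \eqref{eq:muell} shows that $|\tau_{h}\sigma_{j}|^{2}$ is pointwise dominated by the first integrand; after absorption and letting $h\to 0$, this yields the sought uniform $\sobo_{\locc}^{1,2}$-bound on $\sigma_{j}$.

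To convert dual regularity into Sobolev regularity for the primal sequence, I would re-run the difference-quotient argument and exploit the \emph{lower} bound in \eqref{eq:muell} to produce the weighted quantity
\[
\int\varphi^{2}\,\frac{|\tau_{h}\sg(v_{j})|^{2}}{(1+|\sg(v_{j})|^{2})^{\mu/2}}\,\dif x,
\]
whose right-hand side is controlled by $\|\nabla\sigma_{j}\|_{\lebe^{2}(\spt\varphi)}$ paired with $\|v_{j}\|_{\lebe^{q}(\spt\varphi)}$ for an appropriate $q<\infty$. The local $\bmo$-hypothesis, via John--Nirenberg, furnishes uniform $\lebe_{\locc}^{q}$-bounds on $(v_{j})$ for every $q<\infty$, making the right-hand side finite uniformly in $j$. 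A Gagliardo--Nirenberg interpolation in the spirit of Bildhauer--Fuchs, balancing the weight exponent against this $\bmo$-gain, then produces $\sg(v_{j})\in\lebe_{\locc}^{p}$ uniformly for some $p=p(\mu,n)>1$, and the arithmetic of the interpolation closes precisely when $\mu<1+\tfrac{3}{2n}$. Korn's inequality in $\lebe^{p}$ lifts this to a uniform $\sobo_{\locc}^{1,p}$-bound on $(v_{j})$, transferred to $u$ by weak lower semicontinuity.

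The principal obstacle I anticipate is the absence of the identification $\sigma=f'(\sg(u))$ at the limit: since $u\in\ld(\Omega)$ is precisely what we are trying to prove, the dual regularity cannot be transferred to the primal a posteriori, forcing every bootstrap to be performed uniformly in $j$ \emph{before} passing to the limit. The sharp threshold $\mu<1+\tfrac{3}{2n}$ emerges from balancing the degeneracy exponent of the weight $(1+|\sg(v_{j})|^{2})^{\mu/2}$ against the joint gains provided by the $\sobo_{\locc}^{1,2}$-control of $\sigma_{j}$ and the John--Nirenberg upgrade on the $\bmo$-hypothesis.
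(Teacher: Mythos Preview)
Your overall architecture is right and matches the paper: vanishing viscosity, uniform $\ld$-bounds on $(v_{j})$, weak*-compactness, identification of $u\in\gm(\mathfrak{F};u_{0})$, and the Seregin-type uniform $\sobo_{\locc}^{1,2}$-bound on the dual stresses $\sigma_{j}$. The difference-quotient test with $\tau_{-h}(\rho^{2}\tau_{h}v_{j})$ and the weighted lower bound coming from $\mu$-ellipticity are also exactly what the paper does.

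The gap is in how you exploit the $\bmo$-hypothesis. You invoke John--Nirenberg to obtain uniform $\lebe_{\locc}^{q}$-bounds on $(v_{j})$ for all $q<\infty$. That is correct, but it supplies only \emph{integrability}, not \emph{differentiability}, of $v_{j}$. In the right-hand side term
\[
\int_{\Omega}\langle\tau_{s,h}\sigma_{j},\,2\rho\nabla\rho\odot\tau_{s,h}v_{j}\rangle\,\dif x
\]
you then get at best $\|\tau_{s,h}\sigma_{j}\|_{\lebe^{2}}\,\|\tau_{s,h}v_{j}\|_{\lebe^{2}}\leq Ch\cdot\|\nabla\sigma_{j}\|_{\lebe^{2}}\cdot\|v_{j}\|_{\lebe^{2}}$, i.e.\ a gain of order $h^{1}$. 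Feeding this into the $V_{\alpha}$-scheme yields $V_{\alpha}(\sg(v_{j}))\in\besov_{1,\infty}^{1/2}$ locally, hence $\sg(v_{j})\in\lebe^{q}$ for $q<\tfrac{2n}{2n-1}$, and the arithmetic $(2-\alpha)q>1$ with $\alpha\geq(\mu+1)/2$ closes only for $\mu<1+\tfrac{1}{n}$. That is precisely the threshold of Theorem~\ref{thm:main0}, obtained there \emph{without} any $\bmo$-assumption; so your route does not improve the ellipticity range at all.

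What the paper actually uses is not John--Nirenberg but the embedding of Theorem~\ref{thm:bdbmoemb}: $(\bd\cap\bmo)(\R^{n})\hookrightarrow\sobo^{1/p-\varepsilon,p}$, applied with $p=2$. This converts the joint $\bd$- and $\bmo$-information into a genuine \emph{fractional derivative} of $v_{j}$ in $\lebe^{2}$, namely a uniform local bound in $\sobo^{1/2-\varepsilon,2}$, so that $\|\tau_{s,h}v_{j}\|_{\lebe^{2}}\lesssim h^{1/2-\varepsilon}$. Paired with the full derivative on $\sigma_{j}$ this produces $h^{3/2-\varepsilon}$ on the right-hand side, hence $V_{\alpha}(\sg(v_{j}))\in\besov_{1,\infty}^{3/4-\varepsilon/2}$, and the Besov embedding now reaches $q<\tfrac{4n}{4n-3}$. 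The condition $(2-\alpha)q>1$ with $\alpha\geq(\mu+1)/2$ then yields exactly $\mu<1+\tfrac{3}{2n}$. The $\bd\cap\bmo$ embedding (whose proof occupies all of Section~\ref{sec:emb} and rests on Dorronsoro's characterisation of Besov spaces and log-convexity of fractional sharp maximal functions) is the one ingredient your proposal is missing; a Gagliardo--Nirenberg/Bildhauer--Fuchs style interpolation fed only by $\lebe^{q}$-integrability of $v_{j}$ cannot recover this extra half-derivative.
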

This theorem, to be proved in Section~\ref{sec:convdual}, utilises a novel embedding for $\bd\cap\bmo$. The latter uses the \textsc{Doronsorro}-type characterisation of Besov spaces $\besov_{p,q}^{s}$ \cite{Doro} and should be of independent interest, see Section~\ref{sec:emb}.  As Theorems~\ref{thm:main0} and \ref{thm:main1} come along with higher fractional differentiability, we are hereby in position to derive Hausdorff dimension bounds for the singular set of generalised minima, cf.~Corollaries~\ref{cor:HausdorffCor1} and \ref{cor:SCNikolskii}.

As is well-known from the classical minimal surface example, another source of non-uniqueness stems from the non-attainment of boundary values; see the classical examples due to \textsc{Santi} \cite{Santi} or \textsc{Finn} \cite{Finn}. In this respect, the main part of the paper is concluded by investigating the impact of regularity on the uniqueness of generalised minima, see Section~\ref{sec:uniqueness} and Theorem~\ref{thm:unique} therein.

\subsection{Organisation of the Paper} 
In Section \ref{sec:setup} we fix notation, collect the requisite background facts regarding function spaces and record some auxiliary estimates. Section~\ref{sec:emb} is devoted to the embedding of $\bd\cap\bmo$ into fractional Sobolev spaces. In Section~\ref{sec:main}, we give the proofs of the aforementioned main results regarding the regularity and uniqueness of generalised minima. Finally, the appendix in Section~\ref{sec:appendix} discusses extensions of the main results and contains auxiliary material used in the main part. In particular, it covers the relaxation of the Dirichlet problem to $\bd$ and the existence of generalised minima which we tacitly assumed throughout. 
\section{Setup}\label{sec:setup}
\subsection{General Notation}\label{sec:notation}
Unless stated otherwise, we assume $\Omega$ to be an open and bounded Lipschitz domain in $\R^{n}$. Given $x_{0}\in\R^{n}$ and $r>0$, we denote $\ball(x_{0},r):=\{x\in\R^{n}\colon |x-x_{0}|<r\}$ the open ball with radius $r$ centered at $x_{0}$ and $\langle\cdot,\cdot\rangle$ denotes the euclidean inner product on finite dimensional real vector spaces. The $n$--dimensional Lebesgue measure is denoted $\mathscr{L}^{n}$ and the $(n-1)$--dimensional Hausdorff measure is denoted $\mathcal{H}^{n-1}$. Given two positive, real valued functions $f,g$, we indicate by $f\lesssim g$ that $f\leq C g$ with a constant $C>0$. If $U\subset\R^{n}$ is measurable with $\mathscr{L}^{n}(U)>0$ and $f\in\lebe^{1}(U;\R^{N})$, we put as usual 
\begin{align*}
(f)_{U}:=\dashint_{U}f\dif\mathscr{L}^{n}:=\frac{1}{\mathscr{L}^{n}(U)}\int_{U}f\dif\mathscr{L}^{n}. 
\end{align*}
For a given measurable function $f\colon\Omega\to\R^{m}$, a unit vector $e_{s}$, $s=1,...,n$, and a stepwidth $h\in\R\setminus\{0\}$, we define the \emph{forward finite difference} $\tau_{s,h}^{+}f(x)$ and the \emph{backward finite difference} $\tau_{s,h}^{-}$, respectively, by 
\begin{align*}
\tau_{s,h}^{+}f(x):=f(x+he_{s})-f(x),\quad \tau_{s,h}^{-}f(x):=f(x-he_{s})-f(x)
\end{align*}
for all $x\in\Omega$ with $\dista(x,\partial\Omega)>|h|$. Moreover, for such $x$ we put 
\begin{align*}
\Delta_{s,h}f(x):=\Delta_{s,h}^{+}f(x):=\frac{\tau_{s,h}^{+}f(x)}{h},\quad \Delta_{s,h}^{-}f(x):=\frac{\tau_{s,h}^{-}f(x)}{h}. 
\end{align*}
Finally, we denote by $\R_{\sym}^{n\times n}$ the symmetric $n\times n$--matrices with real entries and, given $u,v\in\R^{n}$, we denote their dyadic product $u\otimes v:=uv^{\mathsf{T}}$ and their symmetric dyadic product $u\odot v:=\frac{1}{2}(u\otimes v + v\otimes u)$. 
\subsection{Functions of Bounded Deformation}\label{sec:bd}
Here we recall the space of functions of bounded deformation as introduced in \cite{CMS,Suquet}. For more detailled background information, the reader is referred to \cite{ACD,ST,Baba,FS}.
Let $\Omega\subset\R^{n}$ be open. A measurable function $u\colon\Omega\to\R^{n}$ belongs to $\bd(\Omega)$ if and only if $u\in\lebe^{1}(\Omega;\R^{n})$ and its \emph{total deformation}
\begin{align}
|\E u|(\Omega):=\sup\Big\{\int_{\Omega}\langle v,\di(\varphi)\rangle\dif x\colon\;\varphi\in\hold_{c}^{1}(\Omega;\R_{\sym}^{n\times n}),\|\varphi\|_{\lebe^{\infty}(\Omega;\R_{\sym}^{n\times n})}\leq 1\Big\}
\end{align}
is finite, where the divergence has to be understood row--wise (note that we write $\E u$ for the distributional symmetric gradient when this is a measure and reserve $\sg(u)$ for weak symmetric gradients exclusively). The norm on $\bd(\Omega)$ is given by $\|u\|_{\bd(\Omega)}:=\|u\|_{\lebe^{1}(\Omega;\R^{n})}+|\E u|(\Omega)$, and endowed with this norm, $\bd(\Omega)$ is a Banach space. Since the norm topology is too strong for most applications, it is useful to consider the following convergences instead: We say that a sequence $(u_{k})\subset\bd(\Omega)$ converges to $u\in\bd(\Omega)$ in the \emph{weak*--sense} provided $u_{k}\to u$ strongly in $\lebe^{1}(\Omega;\R^{n})$ and $\E u_{k}\stackrel{*}{\rightharpoonup}\E u$ in the sense of $\R^{n\times n}$--valued measures as $k\to\infty$. Moreover, if $(u_{k})$ converges to $u$ in the weak*--sense and $|\E u_{k}|(\Omega)\to|\E u|(\Omega)$ as $k\to\infty$, then we say that $(u_{k})$ converges \emph{($\bd$--)strictly} to $u$ as $k\to\infty$. Lastly, we say that $(u_{k})$ converges to $u$ in the \emph{($\bd$--)area--strict sense} provided $u_{k}\to u$ strictly and 
\begin{align*}
\sqrt{1+|\E u_{k}|^{2}}(\Omega)\to \sqrt{1+|\E u|^{2}}(\Omega)\qquad\text{as}\;k\to\infty. 
\end{align*}
The concept of applying convex functions (so, e.g., the area--type integrand $\sqrt{1+|\cdot|^{2}}$) to a measure as done here will be carefully explained in Section \ref{sec:convexfunctions of measures} below.

Resembling the fact that $\bv(\Omega;\R^{N})$ arises as the weak*--closure of $\sobo^{1,1}(\Omega;\R^{N})$, $\bd(\Omega;\R^{N})$ is the weak*--closure of the space 
\begin{align}
\ld(\Omega):=\big\{u\in\lebe^{1}(\Omega;\R^{n})\colon\; \sg(u)\in\lebe^{1}(\Omega;\R_{\sym}^{n\times n})\big\}, 
\end{align}
where $\sg(u)$ is the distributional symmetric gradient, and the norm on $\ld(\Omega)$ is given by $\|u\|_{\ld(\Omega)}:=\|u\|_{\lebe^{1}(\Omega;\R^{n})}+\|\sg(u)\|_{\lebe^{1}(\Omega;\R^{n\times n})}$. We further define $\ld_{0}(\Omega)$ to be the closure of $\hold_{c}^{1}(\Omega;\R^{n})$ with respect to $\|\cdot\|_{\ld(\Omega)}$. The claimed property that $\bd(\Omega)$ is the weak*--closure of $\ld(\Omega)$ follows from the fact that $(\ld\cap\hold^{\infty})(\Omega)$ is dense in $\bd(\Omega)$ with respect to weak*- and strict convergence, see \cite{AnzeGiaq}. If $\Omega$ is a bounded Lipschitz subset of $\R^{n}$, then there exists a 
\begin{itemize}
\item surjective trace operator $\tr\colon \ld(\Omega)\to \lebe^{1}(\partial\Omega;\R^{n})$ which is continuous with respect to the $\ld$--norm;
\item surjective trace operator $\tr\colon\bd(\Omega)\to \lebe^{1}(\partial\Omega;\R^{n})$ which is continuous with respect to strict (but not weak*-) convergence. 
\end{itemize}
Given $u\in\bd(\Omega)$ and splitting the symmetric gradient measure $\E u$ into its absolutely continuous and singular parts with respect to Lebesgue measure, $\E u=\E^{a}u+\E^{s}u$, the above trace theorem particularly implies that the trivial extension $\overline{u}$ of $u$ to $\R^{n}$ satisfies
\begin{align*}
\E \overline{u} & =\E^{a}\overline{u}+\E^{s}\overline{u}=:\mathscr{E}\overline{u} \mathscr{L}^{n} + \E^{s}\overline{u} = \mathscr{E}u \mathscr{L}^{n}\mres\Omega + \E^{s}u\mres\Omega + (\tr(u)\odot\nu_{\partial\Omega})\mathcal{H}^{n-1}\mres\partial\Omega, 
\end{align*}
where $\nu_{\partial\Omega}$ is the outward unit normal to the boundary of the Lipschitz set $\Omega\subset\R^{n}$ and $\mathscr{E}u$ is the symmetric part of the approximate gradient of $u$; see \cite{AnzeGiaq,ACD,ST,Baba} for more information. 

By Ornstein's Non--Inequality, we have $\ld(\Omega)\not\hookrightarrow\sobo^{1,1}(\Omega;\R^{n})$ and thus $\bd(\Omega)\not\hookrightarrow \bv(\Omega)$ also. However, some additional information is available when passing to fractional spaces. We recall that, given $1\leq p < \infty$ and $0<s<1$, a measurable function $u\colon\Omega\to\R^{N}$ belongs to the fractional Sobolev space $\sobo^{s,p}(\Omega;\R^{N})$ if and only if $u\in\lebe^{p}(\Omega;\R^{N})$ and the \emph{Gagliardo seminorm} of $u$ is finite, i.e.,
\begin{align*}
[u]_{\sobo^{s,p}(\Omega;\R^{N})}^{p}:=\iint_{\Omega\times\Omega}\frac{|u(x)-u(y)|^{p}}{|x-y|^{n+sp}}\dif\,(x,y)<\infty. 
\end{align*}
The full norm on $\sobo^{s,p}$ is then given by $\|u\|_{\sobo^{s,p}}:=\|u\|_{\lebe^{p}}+[u]_{\sobo^{s,p}}$. We will also need the Besov spaces to be recalled next. Let $0<\alpha<1$ and $1\leq p,q \leq \infty$. In this situation, we define for $u\in\lebe_{\locc}^{1}(\R^{n};\R^{n})$ as above 
\begin{align*}
& [u]_{\besov_{p,q}^{\alpha}(\R^{n})}:=\sum_{s=1}^{n}\left(\int_{0}^{\infty}\left(\frac{\|\tau_{s,h}u\|_{\lebe^{p}(\R^{n})}}{t^{\alpha}}\right)^{q}\frac{\dif t}{t}\right)^{\frac{1}{q}} &\;\text{if}\;1\leq p,q<\infty,\\
& [u]_{\besov_{p,\infty}^{\alpha}(\R^{n})}:=\sup_{h\neq 0}\max_{s\in\{1,...,n\}}\frac{\|\tau_{s,h}u\|_{\lebe^{p}(\R^{n})}}{h^{\alpha}}&\;\text{if}\;1\leq p <\infty,\,q=\infty. 
\end{align*}
These quantities are referred to ($(\alpha,p,q)$-)Besov seminorms. The full Besov norms then are given by $\|u\|_{\besov_{p,q}^{\alpha}}:=\|u\|_{\lebe^{p}}+[u]_{\besov_{p,q}^{\alpha}}$, and we say that $u$ belongs to the Besov space $\besov_{p,q}^{\alpha}(\R^{n};\R^{n})$ if and only if $\|u\|_{\besov_{p,q}^{\alpha}}<\infty$. For the purposes of this paper, if $[u]_{\besov_{p,q}^{\alpha}}<\infty$, then we say that $u\in\dot{\besov}_{p,q}^{\alpha}$, the corresponding homogeneous Besov space. Note that $\besov_{p,p}^{\alpha}\simeq \sobo^{\alpha,p}$, and we shall sometimes call $\mathcal{N}^{\alpha,p}:=\besov_{p,\infty}^{\alpha}$ the $(\alpha,p)$-Nikolski\u{\i} space. The localised versions of these spaces are defined in the obvious manner. As a consequence of \cite[Thm.~2.7.1]{Triebel0}, we obtain
\begin{lemma}\label{lem:besovembedding}
Let $0<s<1$. Then we have $(\besov_{1,\infty}^{s})_{\locc}(\R^{n})\hookrightarrow \lebe_{\locc}^{q}(\R^{n})$ for any $1\leq q <\frac{n}{n-s}$. 
\end{lemma}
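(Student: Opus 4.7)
The plan is to localise via a smooth cutoff and then chain two standard Besov embeddings. Given a compact $K\subset\R^n$, I would pick $\eta\in\hold_c^{\infty}(\R^n)$ with $\eta\equiv 1$ on $K$ and $\spt\eta\subset K'$ for a chosen bounded neighbourhood $K'$ of $K$. First I would verify that the multiplication operator $u\mapsto\eta u$ maps $\besov_{1,\infty}^{s}$ on $K'$ boundedly into $\besov_{1,\infty}^{s}(\R^n)$; this rests on the Leibniz-type identity
\begin{align*}
\tau_{j,h}(\eta u)(x)=\eta(x+he_{j})\,\tau_{j,h}u(x)+u(x)\,\tau_{j,h}\eta(x),
\end{align*}
combined with the elementary pointwise bound $|\tau_{j,h}\eta|\lesssim\min\{|h|,1\}\leq|h|^{s}$ for $|h|\leq 1$ (using $s<1$) and the trivial estimate $\|\tau_{j,h}(\eta u)\|_{\lebe^{1}}\leq 2\|\eta u\|_{\lebe^{1}}$, which for $|h|>1$ already gives the required $|h|^{s}$-bound since $|h|^{s}\geq 1$. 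It then suffices to prove that compactly supported elements of $\besov_{1,\infty}^{s}(\R^n)$ belong to $\lebe^{q}(\R^n)$ with quantitative control, for every $1\leq q<\tfrac{n}{n-s}$.

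For such $q$ I would choose an auxiliary exponent $s'\in(n-n/q,\,s)$, which is non-empty because $q<\tfrac{n}{n-s}$ rearranges precisely to $n-n/q<s$. The key intermediate step is the upgrade of the third index, $\besov_{1,\infty}^{s}(\R^n)\hookrightarrow\besov_{1,1}^{s'}(\R^n)$ on compactly supported functions, which I would obtain by splitting the defining integral for the $\besov_{1,1}^{s'}$-seminorm at $|h|=1$: for $|h|\leq 1$ use $\|\tau_{j,h}u\|_{\lebe^{1}}\leq[u]_{\besov_{1,\infty}^{s}}|h|^{s}$ and note that $\int_{0}^{1}h^{s-s'-1}\,\dif h<\infty$ thanks to $s>s'$; for $|h|>1$ use $\|\tau_{j,h}u\|_{\lebe^{1}}\leq 2\|u\|_{\lebe^{1}}$ and $\int_{1}^{\infty}h^{-s'-1}\,\dif h<\infty$ thanks to $s'>0$. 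Composing this with the classical sharp Besov-Sobolev embedding $\besov_{1,1}^{s'}(\R^n)\hookrightarrow\lebe^{n/(n-s')}(\R^n)$ and the trivial inclusion $\lebe^{n/(n-s')}(\spt\eta)\hookrightarrow\lebe^{q}(\spt\eta)$ (forced by $n/(n-s')>q$ together with the boundedness of $\spt\eta$), I would arrive at
\begin{align*}
\|u\|_{\lebe^{q}(K)}\leq\|\eta u\|_{\lebe^{q}(\R^n)}\lesssim\|u\|_{\besov_{1,\infty}^{s}(K')},
\end{align*}
which is the claimed embedding.

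The main obstacle, and the reason for the strict inequality $q<\tfrac{n}{n-s}$, is the sharpness of the terminal Sobolev-type embedding: with the coarsest third Besov index $\infty$ one cannot reach the critical Sobolev exponent $\tfrac{n}{n-s}$. The two-step decomposition I sketched makes this transparent, as it trades an arbitrarily small amount of smoothness $(s\rightsquigarrow s')$ for an improvement of the third index from $\infty$ down to $1$; this trade-off is viable exactly in the subcritical range $1\leq q<\tfrac{n}{n-s}$, and degenerates precisely at the endpoint.
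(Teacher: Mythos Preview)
Your argument is correct. The paper does not give its own proof of this lemma; it simply records the statement as a consequence of \cite[Thm.~2.7.1]{Triebel0} and moves on. Your route is therefore not so much different as it is \emph{additional}: you supply a self-contained elementary derivation where the paper invokes a black box from Triebel's monograph. The mechanism you isolate---trading an $\varepsilon$ of smoothness to pass from $\besov_{1,\infty}^{s}$ to $\besov_{1,1}^{s'}$ via the split $\int_{0}^{1}+\int_{1}^{\infty}$, and then applying the sharp embedding $\besov_{1,1}^{s'}\hookrightarrow\lebe^{n/(n-s')}$---is exactly the content behind the cited theorem in this range of indices, and it has the merit of making the strictness of $q<\tfrac{n}{n-s}$ visible. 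One minor point of care: in the Leibniz step you need the finite differences $\tau_{j,h}u$ only on a slight enlargement of $\spt\eta$ (for $|h|$ small), so the local Besov hypothesis on $u$ should be read on a compact set strictly larger than $\spt\eta$; this is implicit in your choice of $K'$ but worth stating explicitly.
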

We refer the reader to \cite[Chpt.~4]{AdHe} and \cite[Chpts.~1 and 2]{Triebel0} for more background information on these spaces. Invoking the fractional Sobolev spaces, we have that both $\ld_{\locc}(\Omega)$ and $\bd_{\locc}(\Omega)$ continuously embed into $\sobo_{\locc}^{s,1}(\Omega;\R^{n})$ for any $0<s<1$; see Proposition \ref{prop:cheapembedding} below. For the sake of clarity of exposition, we shall sketch its proof in the appendix, however, note that the result itself can be strengthened using \textsc{Van Schaftingen}'s general theory of cancelling operators \cite{VS} (also see Lemma \ref{lem:bdembedding}).

These statements in turn rest on the \emph{Smith representation formula} \cite{Smith}: Given $u=(u^{1},...,u^{n})\in\hold_{c}^{\infty}(\R^{n};\R^{n})$, we may write 
\begin{align}\label{eq:smith}
u^{k}=\frac{2}{n\omega_{n}}\sum_{i,j=1}^{n}\frac{\partial^{2}u^{k}}{\partial x_{j}\partial x_{j}}*K_{ij},\qquad\text{where}\;K_{ij}(x)=\frac{x_{i}x_{j}}{|x|^{n}}\;\text{for}\;x\in\R^{n}\setminus\{0\}. 
\end{align} 
Setting $\sg(u):=(\sg(u)_{jk})_{jk}$, we observe that
\begin{align*}
\frac{\partial^{2}u^{k}}{\partial x_{i}\partial x_{j}}=\frac{\partial\sg(u)_{jk}}{\partial x_{i}}-\frac{\partial\sg(u)_{ij}}{\partial x_{k}}+\frac{\partial \sg(u)_{ki}}{\partial x_{j}}
\end{align*}
and hence, inserting this relation into \eqref{eq:smith}, we obtain after an integration by parts
\begin{align}\label{eq:representation}
u^{k}=\frac{2}{n\omega_{n}}\sum_{i,j=1}^{n}\big(\sg(u)_{jk}*\frac{\partial K_{ij}}{\partial x_{i}}-\sg(u)_{ij}*\frac{\partial K_{ij}}{\partial x_{k}}+\sg(u)_{ki}*\frac{\partial K_{ij}}{\partial x_{j}}\big)
\end{align}
for all $k=1,...,n$. This formula can be established by means of Fourier analysis and, upon differentiating, indicates the failure of Ornstein's Non-Inequality. 
We record the following result, an elementary proof of which is presented for the reader's convenience in the appendix, cf.~Section~\ref{sec:appendix}: 
\begin{proposition}[$\bd\hookrightarrow \sobo_{\locc}^{s,1}$]\label{prop:cheapembedding}
Let $\Omega$ be an open subset of $\R^{n}$ and $K$ a relatively compact subset of $\Omega$. Then for every $0<s<1$ there exists a constant $C=C(K,s)>0$ such that 
\begin{align}
\|u\|_{\sobo^{s,1}(K;\R^{n})}\leq C \|u\|_{\bd(\Omega)}
\end{align}
holds for all $u\in\bd(\Omega)$. 
\end{proposition}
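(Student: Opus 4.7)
The plan is to exploit the Smith representation formula \eqref{eq:representation}, which expresses each component of a smooth compactly supported $v\colon\R^{n}\to\R^{n}$ as a convolution of components of $\sg(v)$ with kernels $T=\partial K_{ij}/\partial x_{\ell}$ that are homogeneous of degree $-(n-1)$. Convolution with such Riesz-potential--type kernels ought to gain almost one order of fractional regularity from $\sg(v)\in\lebe^{1}$. After a cutoff-and-mollification reduction, it is then enough to establish a uniform $\lebe^{1}$ bound on translation differences of such kernels over bounded sets.

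More concretely, fix $\chi\in\hold_{c}^{\infty}(\Omega)$ with $\chi\equiv 1$ on an open neighbourhood of $K$ and, for $u\in\bd(\Omega)$, set $v_{\varepsilon}:=\chi\,(\rho_{\varepsilon}\ast u)\in\hold_{c}^{\infty}(\R^{n};\R^{n})$ for a standard mollifier $\rho_{\varepsilon}$. The product rule gives $\sg(v_{\varepsilon})=\chi\,\sg(\rho_{\varepsilon}\ast u)+(\rho_{\varepsilon}\ast u)\odot\nabla\chi$, so that $\|\sg(v_{\varepsilon})\|_{\lebe^{1}(\R^{n})}\leq C(\chi)\|u\|_{\bd(\Omega)}$ for all sufficiently small $\varepsilon$ via the standard bound $\|\sg(\rho_{\varepsilon}\ast u)\|_{\lebe^{1}(\Omega')}\leq |\E u|(\Omega)$ on any $\Omega'\supset\spt(\chi)$ with $\operatorname{dist}(\Omega',\partial\Omega)>\varepsilon$. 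Write now $T$ for any of the kernels $\partial K_{ij}/\partial x_{\ell}$; then $|T(z)|\lesssim |z|^{-(n-1)}$ and $|\nabla T(z)|\lesssim |z|^{-n}$. Splitting $\ball(0,R)$ into the core $\{|z|<2|h|\}$, on which one uses the direct size bound for $T$, and the annulus $\{2|h|\leq|z|\leq R\}$, on which the mean value inequality applies, I obtain
\begin{align*}
\int_{\ball(0,R)}|T(z+h)-T(z)|\dif z\lesssim |h|\,\bigl(1+\log(R/|h|)\bigr),\qquad |h|\leq R/2.
\end{align*}
Applying \eqref{eq:representation} to $v_{\varepsilon}$ and invoking Young's inequality with $R=R(K,\spt\chi)$ fixed yields $\|v_{\varepsilon}(\cdot+h)-v_{\varepsilon}\|_{\lebe^{1}(K)}\lesssim \|\sg(v_{\varepsilon})\|_{\lebe^{1}(\R^{n})}\,|h|(1+\log(R/|h|))$ for small $|h|$.

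Inserting this into the Gagliardo seminorm, passing to polar coordinates, and using the trivial bound $2\|v_{\varepsilon}\|_{\lebe^{1}}$ for large $|h|$, one arrives at
\begin{align*}
[v_{\varepsilon}]_{\sobo^{s,1}(K)}\lesssim_{K,s,\chi}\Bigl(\int_{0}^{R/2}r^{-s}\bigl(1+\log(R/r)\bigr)\dif r\Bigr)\|u\|_{\bd(\Omega)}+\|u\|_{\lebe^{1}(\Omega)},
\end{align*}
where the radial integral is finite precisely because $s<1$. The statement then follows by letting $\varepsilon\searrow 0$ and invoking Fatou's lemma for the Gagliardo seminorm (after extracting an a.e.\ convergent subsequence), together with strong $\lebe^{1}$ convergence of $v_{\varepsilon}$ to $\chi u$. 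The main obstacle is really the logarithmic factor in the kernel estimate: it is what prevents us from reaching $s=1$, in agreement with Ornstein's Non-Inequality, but for any $s\in(0,1)$ the weight $r^{-s}$ dominates the logarithm and the estimate closes.
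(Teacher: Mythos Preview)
Your proof is correct and rests on the same foundation as the paper's---the Smith representation formula \eqref{eq:representation} together with the fact that the kernels $\partial K_{ij}/\partial x_{\ell}$ are homogeneous of degree $-(n-1)$. The technical route differs slightly, however. The paper first establishes the \emph{pointwise} H\"older-type bound
\[
|u(x)-u(y)|\leq C\,|x-y|^{t}\int_{\R^{n}}|\sg(u)(z)|\Bigl(|z-x|^{-(n-1+t)}+|z-y|^{-(n-1+t)}\Bigr)\dif z
\]
for an auxiliary exponent $t\in(s,1)$, then divides by $|x-y|^{n+s}$, integrates over a ball, and concludes via Young's convolution inequality (the factor $|x-y|^{t-s-n}$ being locally integrable precisely because $t>s$). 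You instead estimate the $\lebe^{1}$-modulus of translation of the kernel directly, picking up the logarithmic factor $|h|\bigl(1+\log(R/|h|)\bigr)$, and then absorb the logarithm in the radial Gagliardo integral by $s<1$. Both arguments work; the paper trades the logarithm for a small loss of exponent up front, while your version is slightly more hands-on and makes transparent why $s=1$ is exactly out of reach.
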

Finally, we recall that for any connected and open set $\Omega\subset\R^{n}$, the nullspace of $\sg$ is given by the space of \emph{rigid deformations}
\begin{align*}
\mathcal{R}(\Omega):=\big\{r\colon\Omega \ni x\mapsto Ax+b\colon\;A\in\R^{n\times n},\;A^{\mathsf{T}}=-A,\;b\in\R^{n}\big\}.
\end{align*}
\subsection{Convex Analysis}\label{sec:convexanalysismain}
To exploit the convexity of the variational integrals studied in this paper, we shall now record some facts about the dual problem which will turn out useful for the study of the differentiability properties of minima for the primal problem. Given a real Banach space $X$ and a function $g\colon X \to\overline{\R}$, we recall its  \emph{polar function} $g^{*}\colon X^{*}\to\overline{\R}$ given by 
\begin{align*}
g^{*}(x^{*}):=\sup_{x\in X}\big(\langle x^{*},x\rangle_{X^{*}\times X} - g(x)\big),\qquad x^{*}\in X^{*}
\end{align*}
and its \emph{bipolar function} $g^{**}\colon X \to \overline{\R}$ given by 
\begin{align*}
g^{**}(x):=\sup_{x^{*}\in X^{*}}\Big(\langle x^{*},x\rangle_{X^{*}\times X} - g^{*}(x^{*})\Big),\qquad x\in X. 
\end{align*}
Note that if $g$ is lower semicontinuous, proper and convex, then $g=g^{**}$. In view of the convex minimisation problem \eqref{eq:varprin}, we note that since $f\colon\R^{n\times n}\to\R$ is lower semicontinuous and convex, $\partial f(\xi)\neq\emptyset$ for some $\xi\in\R^{n\times n}$ implies the duality relation
\begin{align*}
\eta\in\partial f(\xi)\Leftrightarrow f(\xi)+f^{*}(\eta)=\langle \eta,\xi\rangle. 
\end{align*}
Since $f$ is assumed to be of class $\hold^{2}$, the preceding relation implies that
\begin{align}\label{eq:dualityrel}
f(\xi)+f^{*}(f'(\xi))=\langle f'(\xi),\xi\rangle\qquad\text{holds for all}\;\xi\in\R^{n\times n}. 
\end{align}
Let us recall that the \emph{Lagrangian} $\ell$ is given by 
\begin{align}
\begin{split}
\ell(w,\chi) = \ell(u_{0}+\varphi,\chi) & := \int_{\Omega}\langle \chi,\bm{\varepsilon}(w)\rangle \dif x - \int_{\Omega}f^{*}(\chi)\dif x  = \ell(u_{0},\chi)+\int_{\Omega}\langle\chi,\bm{\varepsilon}(\varphi)\rangle\dif x, 
\end{split}
\end{align}
where $(w,\chi)=(u_{0}+\varphi,\chi)\in (u_{0}+\ld_{0}(\Omega))\times\lebe^{\infty}(\Omega;\R^{n\times n})$. Consequently, the \emph{dual functional} $\mathfrak{R}\colon \lebe^{\infty}(\Omega;\R^{n\times n})\to\overline{\R}$ is given by 
\begin{align}
\mathfrak{R}[\chi]:=\inf\big\{\ell(w,\chi)\colon\;w\in u_{0}+\ld_{0}(\Omega)\big\},\qquad \chi\in\lebe^{\infty}(\Omega;\R_{\sym}^{n\times n}), 
\end{align}
and the \emph{dual problem} is given by 
\begin{align}
\text{ maximise}\;\;\mathfrak{R}\;\text{over}\;\lebe_{\di}^{\infty}(\Omega;\R_{\sym}^{n\times n}):=\big\{\eta\in\lebe^{\infty}(\Omega;\R_{\sym}^{n\times n})\colon \di(\eta)\equiv 0\;\text{in}\;\mathscr{D}'(\Omega;\R^{n}\big\}, 
\end{align}
where $\di$ is the row-wise distributional divergence. Let us note that the choice $\lebe_{\di}^{\infty}$ instead of $\lebe^{\infty}$ stems from the fact that if $\eta\in (\lebe^{\infty}\setminus\lebe_{\di}^{\infty})(\Omega;\R_{\sym}^{n\times n})$, then $\mathfrak{R}[\eta]=-\infty$. So these $\eta$ are irrelevant to the maximisation problem at our disposal. For the purposes of Section~\ref{sec:convdual} it suffices to record that if $f\in\hold^{2}(\R_{\sym}^{n\times n})$ satisfies \eqref{eq:lg}, then we have the minimax principle
\begin{align}
\inf_{u_{0}+\ld_{0}(\Omega)}\mathfrak{F}=\max_{\lebe_{\di}^{\infty}(\Omega;\R_{\sym}^{n\times n})}\mathfrak{R}. 
\end{align}
This follows from abstract duality theory, cf. \textsc{Ekeland \& T\'{e}mam} \cite[Chpts.~III.4 and ~IV.1]{ET}.

\subsection{On the Space $(\sobo_{0}^{1,\infty})^{*}$}\label{sec:dualdef}
In order to work with suitably weak perturbations when applying the Ekeland variational principle, we record some properties of the dual space $(\sobo_{0}^{1,\infty})^{*}$ which seems natural for our purposes in the main body of the paper. A distribution $T\in\mathscr{D}'(\Omega;\R^{n})$ belongs to $(\sobo_{0}^{1,\infty}(\Omega;\R^{n}))^{*}$ if and only if the norm 
\begin{align}
\|T\|_{(\sobo_{0}^{1,\infty})^{*}}:=\sup\big\{ \langle T,\varphi\rangle\colon\;\varphi\in\sobo_{0}^{1,\infty}(\Omega;\R^{n})\;\text{and}\;\|\varphi\|_{\sobo_{0}^{1,\infty}(\Omega;\R^{n})}\leq 1\big\}<\infty,
\end{align}
whenever this expression makes sense; here, we work with the gradient norm 
\begin{align*}
\|\varphi\|_{\sobo_{0}^{1,\infty}(\Omega;\R^{n})}:=\|\nabla \varphi\|_{\lebe^{\infty}(\Omega;\R^{n\times n})}\qquad\text{for}\;\varphi\in \sobo_{0}^{1,\infty}(\Omega;\R^{n}).
\end{align*}
As a dual space, $(\sobo_{0}^{1,\infty}(\Omega;\R^{n}))^{*}$ is complete.
\begin{lemma}\label{lem:negsob}
Let $\Omega\subset\R^{n}$ be open and $K\subset \Omega$ a relatively compact Lipschitz subset of $\Omega$. Let $v\in\lebe^{1}(\Omega;\R^{n})$. Then for any $s=1,...,n$ and any $0<|h|<\dista(K,\partial\Omega)$ we have 
\begin{align*}
\|\Delta_{s,h}v\|_{(\sobo^{1,\infty}(K;\R^{n}))^{*}}\leq \|v\|_{\lebe^{1}(\Omega;\R^{n})}. 
\end{align*}
\end{lemma}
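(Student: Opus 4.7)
\emph{Proof plan.} The strategy is the standard transfer–of–finite–difference trick: pair $\Delta_{s,h}v$ with a test function $\varphi$ and shift the difference operator onto $\varphi$ via the change of variables $y=x+he_{s}$, then exploit that $\varphi$ is Lipschitz (by the $\sobo^{1,\infty}$ assumption) to bound the resulting difference quotient of $\varphi$ by $\|\nabla\varphi\|_{\lebe^{\infty}}$. The hypothesis $|h|<\dista(K,\partial\Omega)$ ensures that $\Delta_{s,h}v(x)$ is pointwise defined for every $x\in K$ and that the translate $K+he_{s}$ is contained in $\Omega$, so that $v$ is accessible at every sample point occurring in the substitution.

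In detail, one first reduces by density to test functions $\varphi\in \hold^{1}_{c}(K;\R^{n})$ with $\|\nabla\varphi\|_{\lebe^{\infty}(K)}\le 1$. Extending such a $\varphi$ by zero to $\tilde\varphi$ on $\R^{n}$ yields a compactly supported $\hold^{1}$-map with $\|\nabla\tilde\varphi\|_{\lebe^{\infty}(\R^{n})}=\|\nabla\varphi\|_{\lebe^{\infty}(K)}$. Extending $v$ by zero outside $\Omega$ and substituting $y=x+he_{s}$ in the pairing gives
\begin{align*}
\int_{K}\Delta_{s,h}v\cdot\varphi\,\dif x = \frac{1}{h}\int_{\R^{n}}v(y)\bigl[\tilde\varphi(y-he_{s})-\tilde\varphi(y)\bigr]\,\dif y = \int_{\R^{n}}v\cdot\Delta_{s,h}^{-}\tilde\varphi\,\dif y,
\end{align*}
and the integrand is supported in $K\cup(K+he_{s})\subset\Omega$, so that $v$ is only sampled where it is defined.

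Applying the mean value inequality to the Lipschitz map $\tilde\varphi$ yields the pointwise bound $|\Delta_{s,h}^{-}\tilde\varphi(y)|\le \|\nabla\tilde\varphi\|_{\lebe^{\infty}(\R^{n})}\le \|\nabla\varphi\|_{\lebe^{\infty}(K)}$ for every $y\in\R^{n}$. Inserting this into the identity of the previous paragraph gives $|\int_{K}\Delta_{s,h}v\cdot\varphi\,\dif x|\le \|v\|_{\lebe^{1}(\Omega)}\,\|\nabla\varphi\|_{\lebe^{\infty}(K)}$, and passing to the supremum over admissible $\varphi$ produces the stated inequality.

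The only delicate point is the zero-extension of the test function: this preserves the Lipschitz constant only because $\varphi$ is taken compactly supported in $K$, so the density reduction at the start is essential. Had we instead worked with a generic $\varphi\in\sobo^{1,\infty}(K;\R^{n})$ with non-vanishing trace on $\partial K$, one would have to invoke a Lipschitz extension (e.g.\ of McShane--Whitney type), which for a Lipschitz set $K$ still exists with a dimensional Lipschitz constant — but to keep the sharp constant one in the final bound it is cleanest to run the argument on the compactly supported dense subclass, exactly as above.
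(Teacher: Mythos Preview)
Your proof is correct and follows the same route as the paper: shift the difference quotient onto the test function via the discrete integration-by-parts identity $\langle \Delta_{s,h}v,\varphi\rangle=\langle v,\Delta_{s,h}^{-}\varphi\rangle$, then bound $\|\Delta_{s,h}^{-}\varphi\|_{\lebe^{\infty}}\le\|\nabla\varphi\|_{\lebe^{\infty}}$. The paper presents this in two lines, working directly with $\varphi\in\sobo_{0}^{1,\infty}(K;\R^{n})$ (so the zero-extension is implicit), while you spell out the density reduction and the support considerations more carefully; the underlying argument is identical.
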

\begin{proof}
Let $\varphi\in\sobo_{0}^{1,\infty}(K;\R^{n})$ be arbitrary with $\|\varphi\|_{\sobo_{0}^{1,\infty}(K;\R^{n})}\leq 1$. Using integration by parts for difference quotients, we estimate
\begin{align*}
|\langle \Delta_{s,h}v,\varphi\rangle | = |\langle v,\Delta_{s,h}^{-}\varphi\rangle\| \leq \|v\|_{\lebe^{1}(\Omega;\R^{n})}\|\Delta_{s,h}\varphi\|_{\lebe^{\infty}(\Omega;\R^{n})}\leq \|v\|_{\lebe^{1}(\Omega;\R^{n})}\|\nabla \varphi\|_{\lebe^{\infty}(\Omega;\R^{n})} 
\end{align*}
and hence passing to the supremum over all admissible test maps $\varphi$ yields the claim. 
\end{proof}
\subsection{A $V$-function estimate}
We conclude this preliminary section by giving an version of an estimate for an auxiliary function in the spirit of \textsc{Acerbi \& Fusco} \cite{AcerbiFusco} that shall turn out convenient for our purposes. For $\alpha>0$ and $M\in\mathbb{N}$, we hereafter introduce the auxiliary map $V_{\alpha}\colon \R^{M}\to \R^{M}$ by
\begin{align}\label{eq:vdef}
V_{\alpha}(\xi):= (1+|\xi|^{2})^{\frac{1-\alpha}{2}}\xi,\qquad\xi\in \R^{M}. 
\end{align}
\begin{lemma}\label{lem:valpha}
Let $1<\alpha <2$ and define $V_{\alpha}$ by \eqref{eq:vdef}. Then we have for any measurable function $v\colon\R^{n}\to\R^{M}$, $h\in\R$ and $e_{s}\in\R^{n}$ with $|e_{s}|=1$ the estimate
\begin{align*}
|\tau_{s,h}V_{\alpha}(v(x))| \sim (1+|v(x+he_{s})|^{2}+|v(x)|^{2})^{\frac{1-\alpha}{2}}|\tau_{s,h}v(x)|.
\end{align*}
Moreover, there exists a constant $c>0$ such that for all $\xi\in\R^{M}$ there holds 
\begin{align*}
\min\{|\xi|,|\xi|^{2-\alpha}\}\leq c V_{\alpha}(\xi).
\end{align*}
Lastly, if $\Omega$ is an open and bounded set and $u\colon\Omega\to\R^{M}$ satisfies $V_{\alpha}(u)\in\lebe^{p}(\Omega;\R^{M})$, then we have 
\begin{align}\label{eq:Valphaintbound}
\int_{\Omega}|u|^{(2-\alpha)p}\dif x \leq \mathscr{L}^{n}(\Omega)+c(p)\int_{\Omega}|V_{\alpha}(u)|^{p}\dif x, 
\end{align}
where $c(p)>0$ is a constant depending only on $p$. 
\end{lemma}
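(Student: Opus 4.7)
The plan is to establish the three assertions in order, with the first (the pointwise two-sided bound on $|\tau_{s,h}V_\alpha(v)|$) being the main technical step and the remaining two being short consequences of elementary case analysis.

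For the first estimate, I would treat $V_\alpha$ as a $\hold^{1}$-map on $\R^{M}$ and reduce everything to pointwise bounds on $DV_\alpha$. A direct computation gives
\begin{equation*}
DV_{\alpha}(\xi)=(1+|\xi|^{2})^{\frac{1-\alpha}{2}}\Bigl(I_{M}+(1-\alpha)\frac{\xi\otimes\xi}{1+|\xi|^{2}}\Bigr),
\end{equation*}
so the eigenvalues of the bracketed matrix lie in $[\min\{1,2-\alpha\},1]$, which is a compact subinterval of $(0,\infty)$ for every $1<\alpha<2$. Hence $DV_{\alpha}(\xi)$ is symmetric positive definite with eigenvalues comparable to $(1+|\xi|^{2})^{\frac{1-\alpha}{2}}$. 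Writing, for fixed $x$ and $h$,
\begin{equation*}
\tau_{s,h}V_{\alpha}(v(x))=\Bigl(\int_{0}^{1}DV_{\alpha}\bigl(v(x)+t\tau_{s,h}v(x)\bigr)\dif t\Bigr)\tau_{s,h}v(x),
\end{equation*}
and using that the function $\xi\mapsto(1+|\xi|^{2})^{\frac{1-\alpha}{2}}$ is monotonically decreasing in $|\xi|$ (as $\alpha>1$) together with the trivial bound $|v(x)+t\tau_{s,h}v(x)|^{2}\lesssim |v(x)|^{2}+|v(x+he_{s})|^{2}$, one obtains the claimed upper bound. For the lower bound, I would observe that the integrated matrix above is itself symmetric positive definite with smallest eigenvalue bounded below by $\int_{0}^{1}c(1+|v(x)+t\tau_{s,h}v(x)|^{2})^{\frac{1-\alpha}{2}}\dif t$, which by convexity of $t\mapsto|v(x)+t\tau_{s,h}v(x)|^{2}$ and an elementary estimate is comparable to $(1+|v(x)|^{2}+|v(x+he_{s})|^{2})^{\frac{1-\alpha}{2}}$.

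For the second statement, a two-case analysis on $|\xi|\lessgtr 1$ does the job: if $|\xi|\leq 1$, then $(1+|\xi|^{2})^{\frac{1-\alpha}{2}}\geq 2^{\frac{1-\alpha}{2}}$, whence $|V_{\alpha}(\xi)|\gtrsim|\xi|=\min\{|\xi|,|\xi|^{2-\alpha}\}$ because $2-\alpha<1$; if $|\xi|\geq 1$, then $(1+|\xi|^{2})^{\frac{1-\alpha}{2}}\gtrsim |\xi|^{1-\alpha}$, yielding $|V_{\alpha}(\xi)|\gtrsim|\xi|^{2-\alpha}=\min\{|\xi|,|\xi|^{2-\alpha}\}$.

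For \eqref{eq:Valphaintbound}, I would split $\Omega=\{|u|\leq 1\}\cup\{|u|>1\}$. On the first piece, $|u|^{(2-\alpha)p}\leq 1$ contributes at most $\mathscr{L}^{n}(\Omega)$. On the second piece, $\min\{|u|,|u|^{2-\alpha}\}=|u|^{2-\alpha}$ since $2-\alpha<1$, so by the second assertion, $|u(x)|^{(2-\alpha)p}\leq c(p)|V_{\alpha}(u(x))|^{p}$ pointwise, and integration gives the claim. The only subtle point in the whole lemma is controlling the eigenvalues of $DV_{\alpha}$ in the first part uniformly in the range $1<\alpha<2$; this is where the assumption $\alpha<2$ is used (to keep $2-\alpha>0$, hence the second-term perturbation from being too negative), and likewise $\alpha>1$ is used to keep the weight $(1+|\xi|^{2})^{\frac{1-\alpha}{2}}$ decreasing so that a single pointwise weight controls the integral along the segment.
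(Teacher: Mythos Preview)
Your treatment of the second and third assertions is correct and essentially identical to the paper's (case split at $|\xi|=1$, then split $\Omega$ accordingly).

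For the first assertion, the paper simply invokes \textsc{Acerbi--Fusco} \cite[Lem.~2.2]{AcerbiFusco} with $\gamma=(1-\alpha)/2\in(-\tfrac12,0)$. Your direct route via $DV_\alpha$ is a legitimate alternative, but the argument as written has the two directions swapped. The combination ``$\xi\mapsto(1+|\xi|^2)^{(1-\alpha)/2}$ is decreasing'' together with ``$|(1-t)a+tb|^2\lesssim|a|^2+|b|^2$'' yields a pointwise \emph{lower} bound on the weight along the segment, hence a lower bound on the smallest eigenvalue of $\int_0^1 DV_\alpha$, hence the inequality $|\tau_{s,h}V_\alpha(v)|\gtrsim W\,|\tau_{s,h}v|$ with $W=(1+|a|^2+|b|^2)^{(1-\alpha)/2}$. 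That is the easy direction.

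The genuine content sits in the \emph{upper} bound $|\tau_{s,h}V_\alpha(v)|\lesssim W\,|\tau_{s,h}v|$, which via your integral representation reduces to
\[
\int_0^1\bigl(1+|(1-t)a+tb|^2\bigr)^{\frac{1-\alpha}{2}}\dif t\;\lesssim\;(1+|a|^2+|b|^2)^{\frac{1-\alpha}{2}}.
\]
This does \emph{not} follow from monotonicity plus the pointwise bound (that goes the wrong way: intermediate points can have small norm, making the integrand large, e.g.\ $a=-b$). It is exactly here that the restriction $\alpha<2$ (equivalently $(1-\alpha)/2>-\tfrac12$) is used: one needs the blow-up of the weight near a possible zero of the segment to be integrable. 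Your phrase ``by convexity of $t\mapsto|v(x)+t\tau_{s,h}v(x)|^2$ and an elementary estimate'' does not supply this; it is precisely the substance of the Acerbi--Fusco lemma. Either cite that lemma, or carry out the one-variable estimate (e.g.\ reduce to $\int_{-R}^{R}(1+s^2)^{(1-\alpha)/2}\dif s\lesssim R^{2-\alpha}$ for $R$ large, valid since $1-\alpha>-1$).
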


\section{An Embedding for $\bd\cap\bmo$}\label{sec:emb}
In this section we prove an embedding result for $\bd\cap\bmo$ into certain fractional Sobolev spaces which will constitute a substantial part of the proof of the main theorem. The proof combines an argument firstly utilised by \textsc{Dorronsoro} \cite{Doro} -- that, from a regularity perspective has also proven useful in different contexts, cp.~\cite{KM2} -- and an embedding result of $\bd$ into certain fractional Sobolev spaces. Let us recall that a locally integrable map $u\colon\R^{n}\to\R^{N}$ belongs to $\bmo(\R^{n};\R^{N})$ if and only if its sharp (centered) maximal function given by
\begin{align}\label{eq:sharp}
(\mathcal{M}^{\#}u)(x):=\sup_{\substack{Q\,\text{cube centered at}\,x}}\;\dashint_{Q}|u-(u)_{Q}|\dif y,\qquad x\in\R^{n}, 
\end{align}
belongs to $\lebe^{\infty}(\R^{n})$. When are working on a domain $\Omega$, then we say that a measurable map $u\colon\Omega\to\R^{N}$ belongs to $\bmo_{\locc}(\Omega;\R^{N})$ provided for each $K\Subset \Omega$ there holds $\mathcal{M}_{K}^{\#}u\in\lebe^{\infty}(K)$, where
\begin{align}\label{eq:sharp1}
(\mathcal{M}_{K}^{\#}u)(x):=\sup_{\substack{Q\subset\Omega\,\\ Q\,\text{cube centered at}\,x}}\;\dashint_{Q}|u-(u)_{Q}|\dif y,\qquad x\in K. 
\end{align}
The main result of this section is then as follows. \index{Interpolation! for $\bd\cap\bmo$}
\begin{theorem}\label{thm:bdbmoemb}
Let $n\geq 2$. Let $1<p<\infty$ and $\varepsilon>0$ such that 
\begin{align}\label{eq:pepsiloncondition}
0<\varepsilon < \min\left\{\frac{(n-1)\big(1-\frac{1}{p}\big)}{1+pn-p},\frac{1}{p}\right\}.
\end{align}
Then 
\begin{align}\label{eq:mainbdbmo}
\bd(\R^{n})\cap\bmo(\R^{n};\R^{n})\subset \sobo^{\frac{1}{p}-\varepsilon,p}(\R^{n};\R^{n}). 
\end{align}
\end{theorem}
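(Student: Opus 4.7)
The plan is to combine the Dorronsoro-type characterisation of fractional Sobolev spaces in terms of local $L^r$-oscillations on balls with the embedding $\bd\hookrightarrow\sobo^{\sigma,1}$ (Proposition~\ref{prop:cheapembedding}) and a pointwise control on these oscillations coming from the $\bmo$-hypothesis via John--Nirenberg. Writing $E_r(x,t):=\inf_{c\in\R^n}(\dashint_{B(x,t)}|u-c|^r\dif y)^{1/r}$, Dorronsoro's theorem gives, for $s\in(0,1)$ and $r\in[1,p]$ with $s>n(\tfrac{1}{r}-\tfrac{1}{p})_{+}$,
\[
[u]^p_{\sobo^{s,p}(\R^n)} \;\sim\; \int_{\R^n}\int_0^\infty E_r(x,t)^p\,t^{-sp-1}\dif t\,\dif x.
\]
I would apply this with $r$ slightly above the critical value $np/(n+sp)$, so that $r<p$.

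To bound $E_r(x,t)$ I would interpolate on the probability space $(B(x,t),\dif y/|B(x,t)|)$: for $q\gg r$,
\[
E_r(x,t) \;\leq\; \Bigl(\dashint_{B(x,t)}|u-(u)_{B(x,t)}|\dif y\Bigr)^{\theta}\Bigl(\dashint_{B(x,t)}|u-(u)_{B(x,t)}|^q\dif y\Bigr)^{(1-\theta)/q},
\]
with $\theta=(\tfrac{1}{r}-\tfrac{1}{q})/(1-\tfrac{1}{q})$. John--Nirenberg controls the second factor uniformly by $C_q\|u\|_{\bmo}$, so $E_r(x,t)^p\lesssim\|u\|_{\bmo}^{p(1-\theta)}E_1(x,t)^{p\theta}$. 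Choosing $q$ large enough (while $C_q$ remains finite) arranges $p\theta\geq 1$, which in turn permits the bound $E_1(x,t)^{p\theta}\leq C\|u\|_{\bmo}^{p\theta-1}E_1(x,t)$. Integrating in $x$ and invoking the $\bd$-estimate
\[
\int_{\R^n}E_1(x,t)\dif x \;\leq\; \tfrac{1}{|B(0,t)|}\int_{|h|<2t}\Bigl(\int_{\R^n}|u(y+h)-u(y)|\dif y\Bigr)\dif h \;\lesssim\; t^{\sigma}\|u\|_{\bd}
\]
for any $\sigma<1$ (which follows from Proposition~\ref{prop:cheapembedding} combined with Fubini), I arrive at $\int_{\R^n}E_r(x,t)^p\dif x\lesssim\|u\|_{\bmo}^{p-1}t^{\sigma}$.

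Inserting into the Dorronsoro identity and splitting $\int_0^\infty=\int_0^1+\int_1^\infty$, the small-$t$ integral converges provided $sp<\sigma<1$, while the large-$t$ tail is controlled by a uniform bound on $\int E_r^p\dif x$ coming from $u\in\lebe^{n/(n-1)}(\R^n)$ via the Strang--Temam embedding $\bd\hookrightarrow\lebe^{n/(n-1)}$. The principal obstacle lies in the simultaneous calibration of the three parameters $r$, $q$, and $s$: the Dorronsoro admissibility $r>np/(n+sp)$ forces $r$ upwards; the interpolation constraint $p\theta\geq 1$ with $\theta=(q-r)/(r(q-1))$ forces $r$ downwards; and the John--Nirenberg constant $C_q$ limits how large $q$ may be made. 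Quantitative tracking of these constraints is what yields the explicit inequality $\varepsilon<(n-1)(1-\tfrac{1}{p})/(1+pn-p)$ in the statement. I expect this parameter book-keeping to be the main technical crux, as the integrated bound on $E_1$ from $\bd$ has no analogue for sub-linear powers (since $E_1(x,t)$ need not decay as $|x|\to\infty$) and the whole strategy collapses unless $p\theta$ can be raised to at least $1$.
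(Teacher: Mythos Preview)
Your route is genuinely different from the paper's. The paper works pointwise with fractional sharp maximal functions: it splits the Dorronsoro $t$-integral at an $x$-dependent point $\delta(x)$ (the Hedberg trick), bounds the small-$t$ piece by $(u^{\#}_{s+\varepsilon})^p$ and the large-$t$ piece by $(u^{\#})^p$, optimises $\delta(x)$, and then invokes the log-convexity of $\alpha\mapsto\mathcal{M}^{\#}_{\alpha}u$ together with the global embedding $\bd(\R^n)\hookrightarrow\sobo^{\tilde\theta,n/(n-1+\tilde\theta)}$ from Lemma~\ref{lem:bdembedding} (into a Lebesgue exponent strictly above $1$). You instead interpolate the $L^r$-oscillation between $E_1$ and $E_q$ via H\"older and John--Nirenberg, reduce $E_r^p$ pointwise to a constant multiple of $E_1$, and then integrate $E_1$ using an $L^1$ modulus-of-continuity bound for $\bd$.

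Two comments. First, the step $\int_{\R^n}E_1(x,t)\,\dif x\lesssim t^{\sigma}\|u\|_{\bd}$ does not follow from Proposition~\ref{prop:cheapembedding} ``combined with Fubini'': that proposition is local, with a constant depending on the compact set. What you actually need is the global embedding $\bd(\R^n)\hookrightarrow\dot{\sobo}^{\sigma,1}(\R^n)$ for $\sigma<1$; this does hold, but via a covering of $\R^n$ by unit balls, applying Proposition~\ref{prop:cheapembedding} on each with the translation-invariant constant, and summing using bounded overlap (the far-diagonal part $|x-y|>1$ being controlled by $\|u\|_{L^1}$). This is fixable but is not what you wrote.

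Second, and more substantively: your own parameter analysis does \emph{not} produce the constraint $\varepsilon<(n-1)(1-\tfrac{1}{p})/(1+pn-p)$. The Dorronsoro admissibility $r>np/(n+sp)$ together with $p\theta\geq 1$ (achievable for any $r<p$ by taking $q$ large, since $C_q<\infty$ for every finite $q$) only forces $np/(n+1-\varepsilon p)<p$, i.e.\ $\varepsilon p<1$; and the $t$-integral $\int_0^1 t^{\sigma-sp-1}\dif t$ converges once $\sigma>1-\varepsilon p$, which is available for every $\varepsilon>0$. So if your argument is correct it yields the embedding for \emph{all} $\varepsilon\in(0,1/p)$, strictly stronger than the theorem and matching the $\bv$-case of Corollary~\ref{cor:bvembedding}. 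The paper's additional constraint enters precisely because it routes through the $L^{n/(n-1+\tilde\theta)}$ embedding of $\bd$, whereas your $L^1$-route bypasses this. Either you have found a sharper argument, or there is a restriction on the Dorronsoro characterisation with centred $E_r$ that you are understating; in any case, the assertion that the constraint \eqref{eq:pepsiloncondition} is ``what yields'' from your book-keeping is not supported by the constraints you actually list.
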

Before embarking on the proof of Theorem \ref{thm:bdbmoemb}, we wish to make some remarks.
\begin{remark}[{\cite[Rem.~3]{BBM}}]\label{rem:BBMexample}\emph{
It is important to note that the preceding theorem is \emph{false} for $\varepsilon=0$. Indeed, if it was true in this case, the injections $\sobo^{1,1}(\R^{n};\R^{n})\hookrightarrow \bd(\R^{n})$ and $\lebe^{\infty}(\R^{n};\R^{n})\hookrightarrow \bmo(\R^{n};\R^{n})$ would yield $(\sobo^{1,1}\cap\lebe^{\infty})(\R^{n};\R^{n})\hookrightarrow \sobo^{\frac{1}{p},p}(\R^{n};\R^{n})$. However, as pointed out by \textsc{Bourgain, Brezis \& Mironescu} \cite{BBM}, this embedding in general fails: Indeed, a localisation argument would then yield $(\sobo^{1,1}\cap\lebe^{\infty})((-1,1))\hookrightarrow \sobo^{\frac{1}{p},p}((-1,1))$ for any $1<p<\infty$. Consider the sequence $(u_{k})$ given by 
\begin{align*}
u_{k}(x):=\begin{cases} -1&\;\text{if}\;-1<x\leq -\frac{1}{2k}\\
2kx&\;\text{if}\;-\frac{1}{2k}\leq x \leq \frac{1}{2k}\\
1&\;\text{if}\;\frac{1}{2k}\leq x <1. 
\end{cases}
\end{align*}
Then $(u_{k})$ is uniformly bounded both in $\sobo^{1,1}((-1,1))$ and $\lebe^{\infty}((-1,1))$. Moreover, it converges weakly* in $\bv((-1,1))$ to $\sgn$ which, however, does not belong to $\sobo^{\frac{1}{2},2}((-1,1))$. 
}
\end{remark}
To prove the theorem, we first recall from \cite{Doro} a mean-value characterisation of $\besov_{p,q}^{s}$. This turns out useful for the proof of Theorem~\ref{thm:bdbmoemb} as it allows to access the additional $\bmo$--regularity which in turn is defined in terms of maximal operators. 
\begin{lemma}\label{lem:Doro}
Let $0<s<1$ and $1\leq p< \infty$. A function $u\in\lebe^{p}(\R^{n})$ belongs to $\sobo^{s,p}(\R^{n})$ if and only if 
\begin{align}\label{eq:dorronsoro}
[u]_{s,p}^{*}:=\left(\int_{\R^{n}}\int_{0}^{\infty}\left\vert\sup_{\substack{Q\ni x \\ |Q|=t^{n}}}\dashint_{Q}\frac{|u-(u)_{Q}|^{p}}{t^{s}}\dif \xi\right\vert^{p} \frac{\dif t}{t}\dif x\right)^{\frac{1}{p}} < \infty. 
\end{align}
Moreover, the expression on the left is equivalent to the usual Gagliardo--seminorm $[\cdot]_{s,p}$.
\end{lemma}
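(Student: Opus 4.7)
The plan is to establish the equivalence $[u]^{*}_{s,p}\sim [u]_{s,p}$ between the Dorronsoro-type expression and the usual Gagliardo seminorm, from which the claimed characterisation of $\sobo^{s,p}(\R^{n})$ follows immediately. I would treat the two inequalities separately.

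\textbf{Easy direction, $[u]^{*}_{s,p}\lesssim [u]_{s,p}$.} Fix $t>0$ and a cube $Q$ of sidelength $t$ containing $x$. Since $u(\xi)-(u)_{Q}=\dashint_{Q}(u(\xi)-u(\eta))\dif \eta$, Jensen's inequality gives
\begin{align*}
\dashint_{Q}|u-(u)_{Q}|^{p}\dif \xi \leq \dashint_{Q}\dashint_{Q}|u(\xi)-u(\eta)|^{p}\dif\xi\dif\eta.
\end{align*}
Because $|\xi-\eta|\lesssim t$ for $\xi,\eta\in Q$, one may freely insert the weight $|\xi-\eta|^{-(n+sp)}$ at the cost of a factor $t^{-(n+sp)}$, and enlarge the domain of integration from $Q\times Q$ to $\R^{n}\times\R^{n}$. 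A standard covering argument (each pair $(\xi,\eta)$ lies in $Q_{t}(\xi)\times Q_{t}(\xi)$ for cubes of sidelength $\sim t$) allows one to interchange the supremum over $Q$ with the outer integrals, and the remaining $\dif t/t$-integration is then absorbed into the Gagliardo double integral by Fubini.

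\textbf{Hard direction, $[u]_{s,p}\lesssim [u]^{*}_{s,p}$.} Fix $x,y\in\R^{n}$ and set $r:=|x-y|$. Consider a dyadic chain of cubes $Q_{k}$ centered between $x$ and $y$ with sidelength $2^{-k}r$, chosen so that $Q_{0}$ contains both $x$ and $y$ and $Q_{k+1}\subset Q_{k}$ for one endpoint (say $x$). By Lebesgue's differentiation theorem, $(u)_{Q_{k}(x)}\to u(x)$ a.e., and similarly for $y$, so a telescoping estimate gives
\begin{align*}
|u(x)-u(y)|\leq \sum_{k\geq 0}\left(|(u)_{Q_{k+1}(x)}-(u)_{Q_{k}(x)}|+|(u)_{Q_{k+1}(y)}-(u)_{Q_{k}(y)}|\right)+\text{similar bridging term}.
\end{align*}
Each consecutive difference is controlled, via $|Q_{k+1}|\sim|Q_{k}|$, by $\dashint_{Q_{k}}|u-(u)_{Q_{k}}|\dif \xi$, and this in turn is dominated pointwise (in $x$ or $y$) by the sharp-type supremum appearing in $[u]^{*}_{s,p}$ at scale $t_{k}=2^{-k}r$. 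After raising to the power $p$, dividing by $|x-y|^{n+sp}$, and integrating in $y$ using the substitution $t_{k}=|x-y|/2^{k}$, one converts the double integral over $(x,y)$ into an integral over $(x,t)$ with weight $\dif t/t$. A discrete Hardy-type summation in $k$ then recovers exactly $[u]^{*}_{s,p}$.

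\textbf{Main obstacle.} The principal technical point is handling the telescoping argument in the hard direction: because each summand involves the supremum over cubes at a fixed scale (which appears inside an $L^{p}(\R^{n})$ norm), one cannot naively sum and square. The bound must be organised so that the discrete Hardy/Minkowski step in the scale parameter $k$ is applied \emph{before} taking $L^{p}(\dif x)$ of the supremum; this is exactly the place where the exponent constraint $0<s<1$ enters, guaranteeing summability of the geometric series $\sum 2^{-ksp}$. Once this is arranged, Jensen in the Lebesgue measure direction closes the estimate and yields the equivalence of the two seminorms.
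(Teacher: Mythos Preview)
Your proposal is essentially correct in outline, but it takes a genuinely different route from the paper. The paper does not prove this lemma from scratch at all: it simply records that $\besov_{p,p}^{s}\simeq\sobo^{s,p}$ for $0<s<1$, $1<p<\infty$, and then invokes \cite[Thm.~1]{Doro}, where Dorronsoro establishes the mean-oscillation characterisation of $\besov_{p,q}^{s}$ in full generality. Your approach, by contrast, is a direct hands-on argument: Jensen plus a covering/Fubini argument for the easy direction, and a dyadic telescoping chain (Lebesgue differentiation plus a discrete Hardy/Minkowski step in the scale parameter) for the hard direction. This is the standard strategy behind results of Dorronsoro type and is perfectly sound; the constraint $0<s<1$ is indeed what makes the geometric sum over scales converge. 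What your route buys is self-containment and transparency about where the exponent restrictions enter; what the paper's route buys is brevity and the fact that Dorronsoro's theorem actually covers the full Besov scale $\besov_{p,q}^{s}$ (all $q$), which is slightly more than what a bare telescoping argument tuned to $q=p$ delivers. For the purposes of this paper only the case $q=p$ is needed, so either approach suffices.
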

\begin{proof}
It suffices to note that $\besov_{p,p}^{s}\simeq \sobo^{s,p}$ for all $0<s<1$ and $1<p<\infty$. By \cite[Thm.~1]{Doro}, the claim follows. 
\end{proof}
The supremum appearing in the integrand of \eqref{eq:dorronsoro} is taken over all cubes having sidelength $t$ and containing $x$. For the following it is important to bound such quantities in terms of centered maximal operators, and hence we briefly pause and give the required modification. We hereafter recall from \cite{DS} that for a locally integrable map $h\colon\R^{n}\to\R^{N}$ its \emph{sharp (centered) maximal operator of order} $0<\alpha\leq 1$ is given by 
\begin{align}\label{eq:sharpfrac}
h_{\alpha}^{\#}(x):=(\mathcal{M}_{\alpha}^{\#}h)(x):=\sup_{Q\,\text{cube centered at}\,x}\frac{1}{\ell(Q)^{n+\alpha}}\int_{Q}|h-(h)_{Q}|\dif y,\qquad x\in\R^{n}, 
\end{align}
where $\ell(Q)$ is the sidelength of the cube $Q$.
\begin{lemma}\label{lem:dororeduction}
For each $\alpha>0$ there exists a number $C=C(n,\alpha)>0$ such that for all $u\in\lebe_{\locc}^{1}(\R^{n})$, all $x_{0}\in\R^{n}$ and all $t>0$ we have 
\begin{align*}
\frac{1}{t^{\alpha}}\sup\left\{\dashint_{Q}|u-(u)_{Q}|\dif y\colon\;Q\ni x_{0},\;\mathscr{L}^{n}(Q)=t^{n}\right\}\leq C(\mathcal{M}_{\alpha}^{\#}u)(x_{0}).
\end{align*}
provided both sides are well--defined and finite. 
\end{lemma}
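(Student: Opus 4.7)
The plan is to reduce the uncentered supremum on the left-hand side to the centered one on the right via a standard enclosure argument. The key geometric observation is that if $Q$ is an axis-aligned cube of sidelength $t$ containing $x_0$, then $Q$ sits inside the cube $\widetilde{Q}$ centered at $x_0$ of sidelength $2t$: indeed, writing $Q = \prod_i [a_i, a_i + t]$ with $a_i \leq x_0^i \leq a_i + t$, each coordinate of a point in $Q$ lies in $[x_0^i - t, x_0^i + t]$, so $Q \subset \widetilde{Q}$.

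Next I would replace the mean $(u)_Q$ by $(u)_{\widetilde{Q}}$ via the standard triangle-inequality trick. First one bounds
\begin{align*}
\dashint_{Q}|u-(u)_{Q}|\dif y \leq 2\dashint_{Q}|u-(u)_{\widetilde Q}|\dif y,
\end{align*}
using that $(u)_Q$ is the best $L^1$-approximation by a constant on $Q$ up to a factor of $2$. Then, since $|\widetilde{Q}|/|Q|=2^n$, enlarging the domain of integration yields
\begin{align*}
\dashint_{Q}|u-(u)_{\widetilde Q}|\dif y \leq 2^{n}\dashint_{\widetilde Q}|u-(u)_{\widetilde Q}|\dif y.
\end{align*}

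Finally I would invoke the very definition \eqref{eq:sharpfrac} of the fractional sharp maximal operator at $x_0$ applied to the centered cube $\widetilde{Q}$ of sidelength $2t$: this yields
\begin{align*}
\dashint_{\widetilde Q}|u-(u)_{\widetilde Q}|\dif y \leq (2t)^{\alpha}(\mathcal{M}_{\alpha}^{\#}u)(x_{0}).
\end{align*}
Chaining the three inequalities, dividing by $t^{\alpha}$, and taking the supremum over all admissible $Q$ gives the claim with $C=2^{n+1+\alpha}$.

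There is no real obstacle here — the lemma is a routine comparison between uncentered and centered maximal quantities, with the only subtle point being the choice of enclosing cube (sidelength $2t$ rather than, e.g., $t$). The finiteness assumption on both sides ensures that no degenerate case needs to be discussed separately.
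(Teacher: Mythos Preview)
Your proof is correct and follows essentially the same route as the paper: enclose the uncentered cube $Q$ of sidelength $t$ in a centered cube at $x_{0}$ of comparable sidelength, then transfer the oscillation from $Q$ to the centered cube and invoke the definition of $\mathcal{M}_{\alpha}^{\#}$. The paper passes through a double-integral step (writing $(u)_{\widetilde{Q}}$ as an average and using Jensen) rather than your direct triangle-inequality bound $\dashint_{Q}|u-(u)_{Q}|\leq 2\dashint_{Q}|u-(u)_{\widetilde{Q}}|$, but this is a cosmetic difference and your version is in fact slightly cleaner, yielding the explicit constant $C=2^{n+1+\alpha}$.
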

The easy proof of this lemma is given for the reader's convenience in the appendix, cf. Section~\ref{sec:appendix}. The second ingredient is an embedding result of $\bd$ into fractional Sobolev spaces: 
\begin{lemma}\label{lem:bdembedding}
Let $n\geq 2$ and $0<s<1$. Then $\bd(\R^{n})\hookrightarrow \sobo^{s,\frac{n}{n-1+s}}(\R^{n};\R^{n})$. Moreover, there exists a constant $C>0$ such that for every ball $\ball$ and every $u\in\bd(\ball)$ there exists $R\in\mathcal{R}(\ball)$ such that 
\begin{align*}
\|u-R\|_{\sobo^{s,\frac{n}{n-1+s}}(\ball;\R^{n})}\leq C |\E u|(\ball).
\end{align*}
\end{lemma}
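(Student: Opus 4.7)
\noindent\textbf{Proof plan for Lemma~\ref{lem:bdembedding}.} The strategy is to reduce the statement to a convolution inequality via the Smith representation formula and then carry out an endpoint estimate at the critical exponent $p=\frac{n}{n-1+s}$. In the first step, I would invoke the density of $C_{c}^{\infty}(\R^{n};\R^{n})$ in $\bd(\R^{n})$ with respect to the area-strict topology; by lower semicontinuity of the Slobodeckij seminorm this reduces the proof to the case $u\in C_{c}^{\infty}(\R^{n};\R^{n})$, provided the eventual constant is independent of $u$. Smith's representation formula \eqref{eq:representation} then realises $u$ as a matrix of convolutions $u=K\ast\sg(u)$, where each entry of $K$ is smooth on $\R^{n}\setminus\{0\}$, positively homogeneous of degree $-(n-1)$, and has angular mean zero (a consequence of the structure of $\partial K_{ij}/\partial x_{k}$, which is what makes Smith's formula compatible with the cancellation of the symmetric gradient).

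For the fractional estimate, I would work with finite differences $\tau_{h}u=\psi_{h}\ast\sg(u)$, where $\psi_{h}(z):=K(z+h)-K(z)$. By homogeneity of $K$, the mean value theorem gives the two-sided bound
\begin{equation*}
|\psi_{h}(z)|\leq C\min\!\big\{|z|^{-(n-1)},\,|h|\,|z|^{-n}\big\},
\end{equation*}
and interpolating these two bounds with parameter $s$ yields $|\psi_{h}(z)|\leq C|h|^{s}|z|^{-(n-1+s)}$. A direct computation (split at $|z|\sim|h|$) at the critical exponent $p=\frac{n}{n-1+s}$ exploits the identity $p(n-1+s)=n$ to show that both pieces of $\int|\psi_{h}|^{p}\dif z$ contribute a term of size $|h|^{sp}$, giving $\|\psi_{h}\|_{L^{p}(\R^{n})}\lesssim|h|^{s}$. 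Applied to a finite measure via Young's convolution inequality, this yields the Nikolski\u{\i}-type bound $\|\tau_{h}u\|_{L^{p}(\R^{n})}\lesssim|h|^{s}|\E u|(\R^{n})$.

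The main obstacle is upgrading this Nikolski\u{\i}-type estimate to the genuine Slobodeckij norm, since the naive integration $\int_{|h|\leq 1}\|\tau_{h}u\|_{L^{p}}^{p}|h|^{-n-sp}\dif h$ diverges. To resolve this I would use the additional cancellation carried by $K$: each component is a Riesz transform of an order-one Riesz potential, so $K\ast\sg(u)$ actually lies in strong $L^{\frac{n}{n-1}}(\R^{n})$ (the Strauss embedding, already present implicitly in \eqref{eq:representation}), and a real interpolation between the Nikolski\u{\i} bound and $\bd(\R^{n})\hookrightarrow L^{\frac{n}{n-1}}(\R^{n})$ upgrades the third Besov index from $\infty$ to $p$, yielding $u\in B^{s}_{p,p}=W^{s,p}(\R^{n};\R^{n})$. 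Equivalently, one can invoke a dyadic decomposition of $\sg(u)$ coupled with the mean-zero structure of $K$ to recover the Slobodeckij seminorm directly through an $\ell^{p}$ summation rather than an $\ell^{\infty}$ one.

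For the Poincar\'{e}-type statement on a ball $\ball$, I would first select a rigid deformation $R\in\mathcal{R}(\ball)$ minimising $\|u-R\|_{L^{1}(\ball;\R^{n})}$; a standard finite-dimensional projection argument together with the Rellich-type Poincar\'{e} inequality on $\bd(\ball)$ yields $\|u-R\|_{\bd(\ball)}\lesssim|\E u|(\ball)$. One then extends $u-R$ to some $\widetilde{u}\in\bd(\R^{n})$ via a bounded linear extension operator $\ext\colon\bd(\ball)\to\bd(\R^{n})$ supported in a fixed dilate of $\ball$ and satisfying $\|\ext v\|_{\bd(\R^{n})}\lesssim\|v\|_{\bd(\ball)}$, and applies the global estimate proved above to $\widetilde{u}$. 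Restricting back to $\ball$ yields the stated inequality with a constant depending only on the dimension.
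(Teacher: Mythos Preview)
Your approach differs from the paper's: the paper obtains the Slobodecki\u{\i} estimate directly by quoting Van Schaftingen's theorem for elliptic and cancelling operators (\cite[Thm.~8.1]{VS}), which yields $\|\varphi\|_{\dot{\sobo}^{s,n/(n-1+s)}}\leq C\|\sg(\varphi)\|_{\lebe^{1}}$ in one stroke, and then passes to general $\bd$-maps by strict approximation and Fatou. Your route via Smith's representation and explicit kernel bounds is more hands-on, and the Nikolski\u{\i} part is correct: the computation $\|\psi_{h}\|_{\lebe^{p}}\lesssim |h|^{s}$ at $p=n/(n-1+s)$ works exactly as you describe, and the treatment of the ball case via a bounded $\bd$-extension and the Poincar\'{e} inequality is essentially the paper's own argument.

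The gap is the upgrade from $\besov_{p,\infty}^{s}$ to $\sobo^{s,p}=\besov_{p,p}^{s}$. Neither of the two mechanisms you propose closes it. Real interpolation between your Nikolski\u{\i} bound and the Strauss embedding $\bd\hookrightarrow\lebe^{n/(n-1)}$ does not obviously improve the third Besov index: the relevant off-diagonal interpolation identities for Besov spaces with differing integrability indices (e.g.\ Bergh--L\"{o}fstr\"{o}m, Thm.~6.4.5) require matching third indices on the input spaces, and interpolating $\besov_{p_{0},\infty}^{s_{0}}$ with $\besov_{p_{1},\infty}^{s_{1}}$ along the critical line $s-n/p=-(n-1)$ only returns $\besov_{p,\infty}^{s}$ again. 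Likewise, ``dyadic decomposition coupled with the mean-zero structure of $K$'' is precisely the content of the Bourgain--Brezis/Van Schaftingen machinery: the cancellation condition on the symmetric gradient is what allows the endpoint $\lebe^{1}\to\sobo^{s,p}$ estimate to hold in the strong (rather than Nikolski\u{\i}) sense, and this cannot be recovered from the pointwise kernel bound $|\psi_{h}(z)|\lesssim |h|^{s}|z|^{-(n-1+s)}$ alone, which holds equally for non-cancelling Riesz-type kernels where the strong embedding fails. In short, you have correctly identified the obstacle but not removed it; the honest fix is to invoke \cite[Thm.~8.1]{VS} at that point, which is what the paper does.
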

\begin{proof}
In the terminology of \cite{VS}, the symmetric gradient is an elliptic and cancelling operator, cf.~\cite[Prop.~6.4]{VS}. 
 Hence, by \cite[Thm.~8.1]{VS}, there exists a constant $C>0$ such that 
\begin{align*}
\|\varphi\|_{\dot{\sobo}^{s,n/(n-1+s)}(\R^{n};\R^{n})}\leq C \|\sg(\varphi)\|_{\lebe^{1}(\R^{n};\R_{\sym}^{n\times n})}
\end{align*}
holds for all $\varphi\in\hold_{c}^{\infty}(\R^{n};\R^{n})$, where $\dot{\sobo}^{s,\frac{n}{n-1+s}}(\R^{n};\R^{n})$ denotes the respective homogeneous fractional Sobolev space. For the general statement, let $u\in\bd(\Omega)$ and choose a sequence $(u_{k})\subset \hold_{c}^{\infty}(\R^{n};\R^{n})$ such that $u_{k}\to u$ strictly and pointwisely $\mathscr{L}^{n}$--a.e.  as $k\to\infty$. Then we obtain, using Fatou's lemma, 
\begin{align*}
\|u\|_{\dot{\sobo}^{s,\frac{n}{n-1+s}}} & \leq \liminf_{k\to\infty} \|u_{k}\|_{\dot{\sobo}^{s,\frac{n}{n-1+s}}} \leq C \liminf_{k\to\infty} \|\sg(u_{k})\|_{\lebe^{1}} = C|\E u|(\R^{n}). 
\end{align*}
Now we use the fact that on $\hold_{c}^{\infty}(\R^{n};\R^{n})$, the homogeneous fractional Sobolev norm is equivalent to the Gagliardo seminorm and arrive at the desired estimate $[u]_{\sobo^{s,p}(\R^{n};\R^{n})}\leq C |\E u|(\R^{n})$. 
Finally, since $1<\frac{n}{n-1+s}<\frac{n}{n-1}$, we have $\lebe^{1}\cap\lebe^{\frac{n}{n-1}}\hookrightarrow \lebe^{\frac{n}{n-1+s}}$ by standard interpolation on $\lebe^{p}$--spaces. Then we use \textsc{Strauss}' embedding \cite{Strauss} $\bd(\R^{n})\hookrightarrow (\lebe^{1}\cap\lebe^{\frac{n}{n-1}})(\R^{n})$. In conclusion, we obtain $\|u\|_{\lebe^{\frac{n}{n-1+s}}(\R^{n};\R^{n})}\leq C\|u\|_{\bd(\R^{n})}$ and in conjunction with the first part of the proof, establishes $\|u\|_{\sobo^{s,\frac{n}{n-1+s}}(\R^{n};\R^{n})}\leq C \|u\|_{\bd(\R^{n})}$. Now, since $\ball$ has Lipschitz boundary, we may pick a bounded linear extension operator $\ext\colon\bd(\ball)\to\bd(\R^{n})$. In consequence, we find by the usual Poincar\'{e} inequality on $\bd$ (cf.~\cite[Thm.~6.5]{ACD}) that there exists $R\in\mathcal{R}(\ball)$ with
\begin{align*}
\|(u-R)\|_{\sobo^{s,\frac{n}{n-1+s}}(\ball)} & \leq \|\ext(u-R)\|_{\sobo^{s,\frac{n}{n-1+s}}(\R^{n})} \\ & \leq \|\ext(u-R)\|_{\bd(\R^{n})} \leq \|u-R\|_{\bd(\ball)} \leq C |\E u|(\ball).
\end{align*}
The proof is complete. 
\end{proof}
\begin{remark}\label{rem:Ws1emb}\emph{
A similar (local) embedding can be achieved for $\sobo^{s,1}$, but this does not follow from the previous lemma as $\sobo^{s,q}\not\hookrightarrow \sobo^{s,p}$ provided $q>p$, cf.~\textsc{Mironescu \& Sickel} \cite{MironescuSickel}.}
\end{remark}
We now come to the
\begin{proof}[Proof of Theorem \ref{thm:bdbmoemb}]
Let $u\in(\bd\cap\bmo)(\R^{n})$. For fixed $x\in\R^{n}$, a cube $Q$ of sidelength $t$ and $x\in Q$, we denote $g_{x,t,Q}(\xi):=u(\xi)-(u)_{Q}$, $\xi\in\R^{n}$. For $1\leq p <\infty$, we recall the (inhomogeneous) \emph{Calder\'{o}n space} $\mathscr{C}^{\alpha,p}(\R^{n})$ defined by 
\begin{align*}
\mathscr{C}^{\alpha,p}(\R^{n}):=\big\{v\in\lebe^{p}(\R^{n})\colon\;\mathcal{M}_{\alpha}^{\#}v\in\lebe^{p}(\R^{n})\big\}
\end{align*}
and equip it with the canonical norm $\|v\|_{\mathscr{C}^{\alpha,p}}:=\|v\|_{\lebe^{p}}+\|v_{\alpha}^{\#}\|_{\lebe^{p}}$ (cp.~Section~\ref{sec:appendix}); the inhomogeneous Calder\'{o}n space $\dot{\mathscr{C}}^{\alpha,p}$ is given by the closure of $\hold_{c}^{\infty}$ with respect to the seminorm $\|\mathcal{M}_{\alpha}^{\#}\cdot\|_{\lebe^{p}}$. Our argument is centered around the Dorronsoro--type characterisation of the Sobolev spaces $\sobo^{s,p}$, Lemma \ref{lem:Doro}. In the situation of the theorem, we put $s:=\frac{1}{p}-\varepsilon$. For $\mathscr{L}^{n}$--a.e. $x\in\R^{n}$, let $\delta(x)>0$ be arbitrary. We split the right hand side term of \eqref{eq:dorronsoro} as 
\begin{align*}
|[u]_{s,p}^{*}|^{p} & :=\int_{\R^{n}}\int_{0}^{\delta(x)}\left\vert\sup_{\substack{Q\ni x \\ |Q|=t^{n}}}\frac{1}{t^{s}}\dashint_{Q}|g_{x,t,Q}(\xi)|\dif \xi\right\vert^{p}\frac{\dif t}{t}\dif x \\ & +\int_{\R^{n}}\int_{\delta(x)}^{\infty}\left\vert\sup_{\substack{Q\ni x \\ |Q|=t^{n}}}\frac{1}{t^{s}}\dashint_{Q}|g_{x,t,Q}(\xi)|\dif \xi\right\vert^{p}\frac{\dif t}{t}\dif x =: \int_{\R^{n}}\mathbf{I}_{\delta}(x)+\mathbf{II}_{\delta}(x)\dif x 
\end{align*}
with an obvious definition for $\mathbf{I}_{\delta}(x)$ and $\mathbf{II}_{\delta}(x)$. Firstly, we have 
\begin{align*}
\mathbf{I}_{\delta}(x) & = \int_{0}^{\delta(x)}\left\vert\sup_{\substack{Q\ni x \\ |Q|=t^{n}}}\frac{1}{t^{s+\varepsilon}}\dashint_{Q}|g_{x,t,Q}(\xi)|\dif \xi\right\vert^{p}\frac{\dif t}{t^{1-\varepsilon p}} \\ & \leq C \int_{0}^{\delta(x)}(u_{s+\varepsilon}^{\#}(x))^{p}\frac{\dif t}{t^{1-\varepsilon p}}\;\;\;\;\;\text{(by Lemma~\ref{lem:dororeduction})}\\
& \leq \frac{C}{\varepsilon p}\delta(x)^{\varepsilon p}(u_{s+\varepsilon}^{\#}(x))^{p}. 
\end{align*}
On the other hand, we have by definition of $\mathcal{M}^{\#}$
\begin{align*}
\mathbf{II}_{\delta}(x) & = \int_{\delta(x)}^{\infty}\left\vert\sup_{\substack{Q\ni x \\ |Q|=t^{n}}}\dashint_{Q}|g_{x,t,Q}(\xi)|\dif \xi\right\vert^{p}\frac{\dif t}{t^{1+sp}} \\
& \leq C \int_{\delta(x)}^{\infty}(u^{\#}(x))^{p}\frac{\dif t}{t^{1+sp}}\;\;\;\;\;\text{(by Lemma~\ref{lem:dororeduction})}\\ &\leq \frac{C}{sp}\delta(x)^{-sp}(u^{\#}(x))^{p}. 
\end{align*}
Collecting estimates, we therefore find 
\begin{align}\label{eq:intermediate-sp-estimate}
[u]_{s,p}^{*} \leq C(s,p,\varepsilon) \int_{\R^{n}}\delta(x)^{\varepsilon p}(u_{s+\varepsilon}^{\#}(x))^{p} + \delta(x)^{-sp}(u^{\#}(x))^{p}\dif x. 
\end{align}
We choose for $\mathscr{L}^{n}$--a.e. $x\in\R^{n}$
\begin{align*}
\delta(x):=\left(\frac{u^{\#}(x)}{u_{s+\varepsilon}^{\#}(x)}\right)^{\frac{1}{s+\varepsilon }}
\end{align*}
and note that we may assume without loss of generality that $u_{s+\varepsilon}^{\#}(x)>0$ since $u$ is constant otherwise and thus the claim is trivial. Inserting this choice of $\delta$ into \eqref{eq:intermediate-sp-estimate}, we obtain 
\begin{align*}
[u]_{s,p}^{*} & \leq C(s,p,\varepsilon)\int_{\R^{n}}(u^{\#}(x))^{\frac{\varepsilon p}{s+\varepsilon}}(u_{s+\varepsilon}^{\#}(x))^{p-\frac{\varepsilon p}{s+\varepsilon}}\dif x \leq C\|u^{\#}\|_{\lebe^{\infty}(\R^{n})}^{\frac{\varepsilon p}{s+\varepsilon}}\int_{\R^{n}}(u_{s+\varepsilon}^{\#})^{\frac{ps}{s+\varepsilon}}\dif x =(**). 
\end{align*}
Now note that by Lemma \ref{lem:logconvex} below, the fractional maximal functions are $\log$--convex in their smoothness indices. As a consequence, we obtain for $0<t<s$ and $1<q<\infty$
\begin{align}\label{eq:interpolationembedding}
(\dot{\mathscr{C}}^{s,q}\cap\bmo)(\R^{n})\hookrightarrow \dot{\mathscr{C}}^{t,\frac{sq}{t}}(\R^{n})\qquad\text{for all}\;0<t<s. 
\end{align}
Indeed, write $t=\lambda\cdot 0 + (1-\lambda)s$ with $0<\lambda<1$ to deduce for $1<q<\infty$ and any $v\in (\dot{\mathscr{C}}^{s,q}\cap\bmo)(\R^{n})$ that
\begin{align*}
(v_{t}^{\#}(x))^{r}\leq (v_{0}^{\#}(x))^{\lambda r}(v_{s}^{\#}(x))^{(1-\lambda)r}
\end{align*}
and hence \eqref{eq:interpolationembedding} follows since $(1-\lambda)s=t$ implies $r=sq/t$. 

We return to the estimation of $(**)$. Because $0<\varepsilon p <1$, we have $0<1-\varepsilon p <1$. By assumption, we have $s+\varepsilon=1/p\in (0,1)$ and $ps/(s+\varepsilon)=p(1-\varepsilon p)$. Define $\Phi(\theta):=\theta n/(n-1+\theta)$, $\theta\in (0,1)$. Since $n\geq 2$, there holds $|\Phi|< 1$ and we have both $\lim_{\theta \searrow 0}\Phi(\theta)=0$ and $\lim_{\theta\nearrow 1}\Phi(\theta)= 1$. Clearly, $\Phi$ is continuous and hence $\Phi\colon (0,1)\to (0,1)$ is bijective. Therefore, choosing $\widetilde{\theta}:=(1-\varepsilon p)(n-1)/(n-1+\varepsilon p)$, we see that $\Phi(\widetilde{\theta})=1-\varepsilon p$. With this choice of $\widetilde{\theta}$, the embedding \eqref{eq:interpolationembedding} gives
\begin{align}\label{eq:mostimportantclaim}
(\dot{\mathscr{C}}^{\widetilde{\theta},n/(n-1+\widetilde{\theta})}\cap\bmo)(\R^{n})\hookrightarrow \dot{\mathscr{C}}^{\frac{1}{p},p(1-\varepsilon p)}(\R^{n}) 
\end{align}
since $\widetilde{\theta}>\frac{1}{p}$: Indeed, since $0<\varepsilon p <1-\frac{1}{p}$ by assumption, we deduce $p(1-\varepsilon p)-1>0$, and therefore with $\gamma=\varepsilon p$
\begin{align*}
\widetilde{\theta}=\frac{(1-\gamma)(n-1)}{n-1+\gamma}>\frac{1}{p} & \Leftrightarrow (p-\gamma p)(n-1)>n-1+\gamma \Leftrightarrow pn-\gamma p n - p + \gamma p >n-1+\gamma \\
& \Leftrightarrow pn-p-n+1>\gamma +\gamma pn-\gamma p = \gamma (1+pn-p)\\
& \Leftrightarrow \frac{pn-p-n+1}{1+pn-p}>\gamma = \varepsilon p \\
& \Leftrightarrow \frac{n-1-n/p+1/p}{1+pn-p}>\varepsilon
\end{align*}
which is true by assumption \ref{eq:pepsiloncondition}. Now we choose $\widetilde{\theta}\in (0,1)$ such that there holds $\bd(\R^{n})\hookrightarrow\sobo^{\widetilde{\theta},n/(n-1+\widetilde{\theta})}(\R^{n};\R^{n})$. This is possible by Lemma~\ref{lem:bdembedding} and, using Lemma~\ref{lem:devoresharpleylemma}, embedding the latter Sobolev-Slobodeckji\u{\i} space into the Calder\'{o}n space $\mathscr{C}^{\widetilde{\theta},n/(n-1+\widetilde{\theta})}$, we see that $\bd\cap\bmo\hookrightarrow\mathscr{C}^{\widetilde{\theta},n/(n-1+\widetilde{\theta})}\cap\bmo$. So we are in position to conclude by \eqref{eq:mostimportantclaim}. The proof is complete.
\end{proof}
\begin{remark}\emph{
The preceding proof, in particular, the choice of $x$--dependent $\delta$, uses the so--called \emph{Hedberg trick} as explained in \cite[Prop.~3.1.2]{AdHe}.}
\end{remark}
In the proof of Theorem \ref{thm:bdbmoemb}, we used the $\log$--convexity of the fractional maximal operator with respect to its index, a fact whose proof we give now: 
\begin{lemma}\label{lem:logconvex}
The function $s\mapsto \mathcal{M}_{s}^{\#}v(x)$ is $\log$--convex on $(0,1)$ for any locally integrable function $v\colon\R^{n}\to\R^{N}$. That is, for any $s,t\in (0,1)$ and any $\lambda\in (0,1)$ there holds 
\begin{align}\label{eq:logconvex}
\mathcal{M}_{\lambda s+ (1-\lambda)t}^{\#}v(x)\leq (\mathcal{M}_{ s}^{\#}v(x))^{\lambda}(\mathcal{M}_{t}^{\#}v(x))^{1-\lambda}.
\end{align} 
\end{lemma}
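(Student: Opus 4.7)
The plan is to derive the inequality cube-by-cube, taking advantage of the fact that the sharp fractional maximal operator is defined as a supremum of a quantity that factors cleanly under the substitution $\alpha\mapsto\lambda s+(1-\lambda)t$. Fix $x\in\R^n$ and an arbitrary cube $Q$ centered at $x$ with sidelength $r=\ell(Q)>0$. I would write the normalising exponent $n+\lambda s+(1-\lambda)t$ as the convex combination
\begin{align*}
n+\lambda s+(1-\lambda)t=\lambda(n+s)+(1-\lambda)(n+t),
\end{align*}
so that $r^{n+\lambda s+(1-\lambda)t}=(r^{n+s})^{\lambda}(r^{n+t})^{1-\lambda}$. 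Trivially splitting the oscillation integral as $\int_Q|v-(v)_Q|\dif y=\big(\int_Q|v-(v)_Q|\dif y\big)^{\lambda}\big(\int_Q|v-(v)_Q|\dif y\big)^{1-\lambda}$ then yields the pointwise identity
\begin{align*}
\frac{1}{r^{n+\lambda s+(1-\lambda)t}}\int_Q|v-(v)_Q|\dif y=\biggl(\frac{1}{r^{n+s}}\int_Q|v-(v)_Q|\dif y\biggr)^{\lambda}\biggl(\frac{1}{r^{n+t}}\int_Q|v-(v)_Q|\dif y\biggr)^{1-\lambda}.
\end{align*}

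Since the cube $Q$ is centered at $x$, each of the two parenthetical factors on the right is bounded above by $\mathcal{M}_s^{\#}v(x)$ and $\mathcal{M}_t^{\#}v(x)$ respectively, by definition \eqref{eq:sharpfrac}. Combining, every admissible cube $Q$ centered at $x$ satisfies
\begin{align*}
\frac{1}{r^{n+\lambda s+(1-\lambda)t}}\int_Q|v-(v)_Q|\dif y\leq \bigl(\mathcal{M}_s^{\#}v(x)\bigr)^{\lambda}\bigl(\mathcal{M}_t^{\#}v(x)\bigr)^{1-\lambda}.
\end{align*}
Taking the supremum over all such $Q$ on the left yields precisely \eqref{eq:logconvex}.

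There is essentially no obstacle here; the only subtlety is the degenerate case in which one of $\mathcal{M}_s^{\#}v(x)$, $\mathcal{M}_t^{\#}v(x)$ is zero or infinite, which is handled with the standard convention $0\cdot\infty=0$ (if one factor is zero, then $v$ is $\mathscr{L}^n$-a.e.\ constant on every cube centered at $x$, forcing the left-hand side to vanish as well, and if one factor is infinite the inequality is trivial). Thus the proof is a direct consequence of the multiplicative structure of the normalised oscillation $r^{-(n+\alpha)}\int_Q|v-(v)_Q|\dif y$ as a function of $\alpha$.
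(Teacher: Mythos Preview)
Your proof is correct and follows essentially the same approach as the paper: both split the exponent as $n+\lambda s+(1-\lambda)t=\lambda(n+s)+(1-\lambda)(n+t)$, factor the normalised oscillation over a fixed cube accordingly, bound each factor by the corresponding maximal function, and pass to the supremum. Your handling of the degenerate cases is slightly more explicit than the paper's, which simply assumes the right side is finite at the outset.
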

\begin{proof}
Let $v\in\lebe_{\locc}^{1}(\R^{n})$. If the right side of \eqref{eq:logconvex} is infinite, there is nothing to prove, so we may assume without loss of generality that $\mathcal{M}_{s}^{\#}v(x), \mathcal{M}_{t}^{\#}v(x)<\infty$. Let $Q\subset\R^{n}$ be a non--degenerate cube. Then we have 
\begin{align*}
\frac{1}{\ell(Q)^{n+\lambda s + (1-\lambda)s}}\int_{Q}|v-(v)_{Q}|\dif x & =\frac{1}{\ell(Q)^{\lambda(n+s)}}\left(\int_{Q}|v-(v)_{Q}|\dif x\right)^{\lambda}\\ & \times \frac{1}{\ell(Q)^{(1-\lambda)(n+t)}}\left(\int_{Q}|v-(v)_{Q}|\dif x\right)^{1-\lambda} \\ & \leq (\mathcal{M}_{s}^{\#}v(x))^{\lambda}(\mathcal{M}_{t}^{\#}v(x))^{1-\lambda}. 
\end{align*}
We then pass to the supremum over all cubes $Q$ to deduce the claim. 
\end{proof}
For the sake of better traceability, we explicitely note the following. 
\begin{corollary}\label{cor:bvembedding}
Let $1<p<\infty$ and $\varepsilon>0$ such that $p\varepsilon<1$. Then for any $N\geq 1$ we have 
\begin{align}\label{eq:mainbdbmo}
(\bv\cap\bmo)(\R^{n};\R^{N})\subset \sobo^{\frac{1}{p}-\varepsilon,p}(\R^{n};\R^{N}). 
\end{align}
\end{corollary}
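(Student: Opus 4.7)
The plan is to adapt the proof of Theorem \ref{thm:bdbmoemb}, with BV playing the role of BD throughout. Let $u\in(\bv\cap\bmo)(\R^{n};\R^{N})$ and set $s:=1/p-\varepsilon$. Invoking Dorronsoro's characterization (Lemma \ref{lem:Doro}), I would proceed exactly as in the proof of Theorem \ref{thm:bdbmoemb}: split the $t$--integral at a pointwise cutoff $\delta(x)$, estimate the two resulting pieces by fractional sharp maximal operators via Lemma \ref{lem:dororeduction}, and optimize $\delta(x)$ via the Hedberg trick, arriving at
\[
[u]_{s,p}^{\ast p}\;\leq\;C\,\|u\|_{\bmo}^{\varepsilon p^{2}}\int_{\R^{n}}(u_{1/p}^{\#})^{p(1-\varepsilon p)}\dif x.
\]

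The BV--analog of Lemma \ref{lem:bdembedding} is immediate from Van Schaftingen's theory: the full gradient $\nabla$ acting on $\R^{N}$--valued functions is both elliptic and cancelling for every $N\geq 1$ (the symbol $\xi\otimes\mathrm{id}_{\R^{N}}$ is injective for $\xi\neq 0$, and $\bigcap_{|\xi|=1}\{\eta\otimes\xi:\eta\in\R^{N}\}=\{0\}$), so one obtains
\[
\bv(\R^{n};\R^{N})\hookrightarrow\sobo^{\widetilde\theta,\,n/(n-1+\widetilde\theta)}(\R^{n};\R^{N})\qquad\text{for every}\;\widetilde\theta\in(0,1).
\]
Composing this with the DeVore-Sharpley embedding of Sobolev-Slobodeckij\u{\i} spaces into Calder\'on spaces (Lemma \ref{lem:devoresharpleylemma}), and then invoking the log-convexity of the fractional sharp maximal operator (Lemma \ref{lem:logconvex}) together with the BMO bound --- precisely as in \eqref{eq:mostimportantclaim} and the surrounding discussion --- one obtains $(\bv\cap\bmo)(\R^{n};\R^{N})\hookrightarrow\dot{\mathscr{C}}^{1/p,\,p(1-\varepsilon p)}(\R^{n};\R^{N})$, which is exactly what is needed to conclude that the integral above is finite.

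The main obstacle, as in Theorem \ref{thm:bdbmoemb}, is verifying that the parameters align so that the entire chain of embeddings closes under only the condition $p\varepsilon<1$. The key improvement over the BD case is that, in the BV setting, the auxiliary parameter $\widetilde\theta\in(1/p,1)$ can be chosen with more flexibility thanks to the absence of Korn-Poincar\'e-type obstructions (the BV Poincar\'e inequality controls $\dashint_{Q}|u-(u)_{Q}|$ directly in terms of the total variation of $\D u$ on $Q$, whereas for BD one has to subtract a rigid motion). Consequently, the dimension-dependent restriction featured in \eqref{eq:pepsiloncondition} does not arise, and only the natural constraint $\varepsilon<1/p$ (equivalent to $s>0$) survives.
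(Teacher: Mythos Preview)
Your overall strategy matches the paper's: run the proof of Theorem~\ref{thm:bdbmoemb} verbatim, replacing the $\bd$-embedding of Lemma~\ref{lem:bdembedding} by the corresponding $\bv$-embedding $\bv(\R^{n};\R^{N})\hookrightarrow\sobo^{\theta,n/(n-1+\theta)}(\R^{n};\R^{N})$. That part is fine and is exactly what the paper indicates.

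The gap is in your final paragraph, where you claim the dimension-dependent half of condition~\eqref{eq:pepsiloncondition} disappears because the $\bv$-Poincar\'e inequality subtracts only the mean rather than a rigid motion. This observation, while true, does not buy you the extra room you assert. In the proof of Theorem~\ref{thm:bdbmoemb}, the auxiliary parameter $\widetilde\theta$ is \emph{not} freely chosen: it is forced by the equation $\Phi(\widetilde\theta)=1-\varepsilon p$, where $\Phi(\theta)=\theta n/(n-1+\theta)$ is determined by the integrability exponent in the embedding $\bv\hookrightarrow\sobo^{\theta,n/(n-1+\theta)}$. Since this embedding has \emph{identical} exponents to the $\bd$-embedding of Lemma~\ref{lem:bdembedding}, running the argument as you describe yields exactly the same constraint $\widetilde\theta>1/p$, hence the same condition~\eqref{eq:pepsiloncondition}. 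The Poincar\'e remark does not enter anywhere in that chain: Lemma~\ref{lem:bdembedding} already uses only the mean (not a rigid motion) in its Dorronsoro-type estimates, so nothing changes when you pass from $\bd$ to $\bv$ at that level.

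If you want the conclusion under the sole hypothesis $p\varepsilon<1$, you need a genuinely different ingredient---for instance, exploiting that for $u\in\bv$ one has the pointwise bound $u_{1}^{\#}\lesssim\mathcal{M}(|\D u|)$ (which is where the mean-vs-rigid-motion distinction actually matters), and interpolating from $\alpha=1$ rather than from $\alpha=\widetilde\theta<1$. But this route runs into the obstacle that $\mathcal{M}(|\D u|)\in\lebe^{1,\infty}$ only, so the resulting integral $\int(\mathcal{M}(|\D u|))^{1-\varepsilon p}\dif x$ need not be finite on $\R^{n}$. You have not addressed this, and the hand-wave about ``more flexibility'' does not substitute for it.
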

The previous corollary follows along the same lines as the proof of Theorem \ref{thm:bdbmoemb}, now using the standard Sobolev embedding $\bv(\R^{n};\R^{N})\hookrightarrow \sobo^{\theta,\frac{n}{n-1+\theta}}(\R^{n};\R^{N})$ for $0<\theta<1$. This slightly improves the embedding $(\bv\cap\lebe^{\infty})(\R^{n})\hookrightarrow \besov_{p,\infty}^{1/p}(\R^{n})$ for $1<p<\infty$ as given in Lemma 38.1 in \textsc{Tartar}'s monograph \cite{Tartar}. 
\section{Viscosity Approximations}\label{sec:main}
\subsection{The Ekeland--type Approximation for $1<\mu<\frac{n+1}{n}$}\label{sec:ekeland}
To avoid manipulations on measures when working with the Euler--Lagrange equation satisfied by the minimiser $u\in\bd(\Omega)$, we shall consider approximate problems which allow us to work with $\ld$--maps first. More precisely, starting from an arbitrary minimising sequence, we shall employ Ekeland's variational principle to construct another minimising sequence which is close to the original sequence, however, features convenient optimality properties. For the reader's convenience, we therefore first recall 
\begin{lemma}[Ekeland Variational Principle, {\cite[Thm.~5.6]{Giusti}}]\label{lem:ekeland}
Let $(X,d)$ be a \emph{complete} metric space and $J\colon X \to\R\cup\{\infty\}$, $J\not\equiv\infty$, a lower semicontinuous functional which is bounded from below. Fix $\varepsilon>0$. If $u\in X$ is such that 
\begin{align*}
J(u)\leq \inf_{X}J+\varepsilon, 
\end{align*}
then there exists $v\in X$ with the following properties: $J(v)\leq J(u)$, $d(u,v)\leq \sqrt{\varepsilon}$ and for all $w\neq v$ we have
\begin{align*}
J(v)<J(w)+\sqrt{\varepsilon}d(v,w). 
\end{align*}
\end{lemma}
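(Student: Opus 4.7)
The plan is to produce $v$ as the limit of a decreasing chain with respect to a partial order on $X$ that \emph{encodes} the desired strict minimality condition. Define $w_1 \preceq w_2$ iff $J(w_1) + \sqrt{\varepsilon}\, d(w_1, w_2) \leq J(w_2)$. Reflexivity is obvious, transitivity follows from the triangle inequality, and antisymmetry from adding the two defining inequalities (which forces $2\sqrt{\varepsilon}\, d(w_1,w_2) \leq 0$). The key conceptual observation is that conclusion (c) of the lemma is precisely the statement that $v$ is a \emph{minimal} element of $(X, \preceq)$ satisfying $v \preceq u$.

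Starting from $u_0 := u$ I would construct a chain inductively by choosing, at each stage, $u_{n+1}$ in the non-empty lower set $S_n := \{w \in X : w \preceq u_n\}$ with $J(u_{n+1}) \leq \inf_{S_n} J + 2^{-n}$. The defining inequality $u_{n+1} \preceq u_n$ gives $\sqrt{\varepsilon}\, d(u_n, u_{n+1}) \leq J(u_n) - J(u_{n+1})$. Telescoping, together with boundedness of $J$ from below, forces $(u_n)$ to be Cauchy; completeness of $X$ then produces a limit $v := \lim_n u_n$. Passing $m \to \infty$ in the inequality $u_m \preceq u_n$ (for $m > n$) and invoking the lower semicontinuity of $J$ yields $v \preceq u_n$ for every $n$.

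Conclusions (a) and (b) are then routine: lower semicontinuity gives $J(v) \leq \liminf_n J(u_n) \leq J(u_0) = J(u)$, which is (a); and the telescoped estimate
\[
d(u, u_n) \leq \frac{J(u) - J(u_n)}{\sqrt{\varepsilon}} \leq \frac{J(u) - \inf_X J}{\sqrt{\varepsilon}} \leq \sqrt{\varepsilon}
\]
passes to the limit $n \to \infty$ to give (b).

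The main obstacle, and what requires genuine care, is the strict minimality assertion (c). Suppose for contradiction that some $w \neq v$ satisfies $J(w) + \sqrt{\varepsilon}\, d(w, v) \leq J(v)$, i.e., $w \preceq v$. Transitivity with $v \preceq u_n$ then forces $w \in S_n$ for every $n$, hence $J(u_{n+1}) \leq J(w) + 2^{-n}$ by the almost-minimising choice of $u_{n+1}$. Sending $n \to \infty$ and invoking lower semicontinuity once more yields $J(v) \leq J(w)$; combined with $w \preceq v$ this forces $\sqrt{\varepsilon}\, d(w, v) \leq 0$, contradicting $w \neq v$. The delicate point is that lower semicontinuity of $J$ must be used \emph{twice}, first to establish $v \preceq u_n$ in the limit (so that transitivity can be applied), and then to convert the summable slack $2^{-n}$ in the almost-minimising property into the clean inequality $J(v) \leq J(w)$.
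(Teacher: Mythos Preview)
Your proof is correct and follows the standard partial-order argument for Ekeland's variational principle. Note, however, that the paper does not supply its own proof of this lemma: it merely cites \cite[Thm.~5.6]{Giusti} and uses the result as a black box, so there is no in-paper argument to compare against. Your approach---constructing a $\preceq$-decreasing, almost-minimising chain and extracting a minimal element via completeness and lower semicontinuity---is precisely the classical one (essentially Ekeland's original, and the one reproduced in Giusti's book), and every step you outline goes through as written.
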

Since our strategy to prove uniform higher integrability by means of finite differences relies on suitable Nikolski\u{\i}--type estimates, we will need to apply Ekeland's variational principle with respect to a metric which is considerably weaker than the symmetric--gradient metric $\dif\,(u,v):=\|\sg(u)-\sg(v)\|_{\lebe^{1}(\Omega;\R^{n\times n})}$ on suitable Dirichlet classes. Here we again follow \cite{BS1}, however, invoke the metric induced by the $(\sobo_{0}^{1,\infty})^{*}$--norm as discussed in Section \ref{sec:dualdef}.
\begin{lemma}\label{lem:lsc1}
Given $p>1$, let $F\colon\R_{\sym}^{n\times n}\to \R$ be a convex function such that 
\begin{align}\label{eq:auxgrowthbd}
c|\xi|^{p}-\vartheta\leq F(\xi)\leq \Theta(1+|\xi|^{p})
\end{align}
holds for all $\xi\in\R_{\sym}^{n\times n}$ with three constants $c,\vartheta,\Theta>0$. Given $u_{0}\in \sobo^{1,p}(\Omega;\R^{n})$, the functional 
\begin{align}
\mathscr{F}[u]:=\begin{cases} \displaystyle\int_{\Omega}F(\sg(u))\dif x&\;\text{if}\;u\in u_{0}+\sobo_{0}^{1,p}(\Omega;\R^{n}),\\
+\infty&\;\text{if}\;u\in ((\sobo_{0}^{1,\infty})^{*}\setminus(u_{0}+\sobo_{0}^{1,p}))(\Omega;\R^{n})
\end{cases}
\end{align}
is lower semicontinuous with respect to norm convergence on $(\sobo_{0}^{1,\infty}(\Omega;\R^{n}))^{*}$. 
\end{lemma}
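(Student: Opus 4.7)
The plan is to reduce lower semicontinuity with respect to the very weak $(\sobo_{0}^{1,\infty})^{*}$--topology to the classical lower semicontinuity of convex integral functionals with respect to weak convergence in $\sobo^{1,p}$. The key observation is that, by Poincar\'{e}'s inequality on $\sobo_{0}^{1,\infty}(\Omega;\R^{n})$, the natural pairing gives a continuous embedding $\lebe^{1}(\Omega;\R^{n})\hookrightarrow (\sobo_{0}^{1,\infty}(\Omega;\R^{n}))^{*}$ (compare with Lemma~\ref{lem:negsob}), so $\sobo^{1,p}$--valued limits are unambiguously identified once the $(\sobo_{0}^{1,\infty})^{*}$--limit is fixed.

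Let $(u_{k})$ be a sequence converging to $u$ in $(\sobo_{0}^{1,\infty}(\Omega;\R^{n}))^{*}$. Assume without loss of generality that $\liminf_{k\to\infty}\mathscr{F}[u_{k}]<\infty$; otherwise the conclusion is immediate. Passing to a (non--relabelled) subsequence, I may further assume that the $\liminf$ is attained as a finite limit and that $u_{k}\in u_{0}+\sobo_{0}^{1,p}(\Omega;\R^{n})$ for every $k$. The lower bound in~\eqref{eq:auxgrowthbd} then yields a uniform bound on $\|\sg(u_{k})\|_{\lebe^{p}(\Omega;\R_{\sym}^{n\times n})}$. Since $p>1$ and $u_{k}-u_{0}\in \sobo_{0}^{1,p}(\Omega;\R^{n})$, the standard Korn inequality on $\sobo_{0}^{1,p}$ turns this into a uniform bound on $\|u_{k}-u_{0}\|_{\sobo^{1,p}(\Omega;\R^{n})}$.

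Reflexivity of $\sobo^{1,p}$ then produces a further subsequence and a map $v\in u_{0}+\sobo_{0}^{1,p}(\Omega;\R^{n})$ such that $u_{k}\rightharpoonup v$ weakly in $\sobo^{1,p}$; in particular, by Rellich--Kondrachov, $u_{k}\to v$ strongly in $\lebe^{1}(\Omega;\R^{n})$, hence also in $(\sobo_{0}^{1,\infty}(\Omega;\R^{n}))^{*}$ by the embedding recalled above. As $(\sobo_{0}^{1,\infty})^{*}$--limits are unique, $u=v$, so $u\in u_{0}+\sobo_{0}^{1,p}(\Omega;\R^{n})$ and in particular $\mathscr{F}[u]<\infty$.

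It remains to show $\mathscr{F}[u]\leq \liminf_{k\to\infty}\mathscr{F}[u_{k}]$. This is the textbook weak lower semicontinuity of convex integral functionals: the convexity of $F$ together with the upper growth bound in~\eqref{eq:auxgrowthbd} guarantees that $w\mapsto \int_{\Omega}F(\sg(w))\dif x$ is weakly lower semicontinuous on $\sobo^{1,p}(\Omega;\R^{n})$, and applied to the weakly convergent subsequence $u_{k}\rightharpoonup u$ this yields the desired inequality. I do not expect a genuine obstacle here; the only subtlety is to check at the outset that the $(\sobo_{0}^{1,\infty})^{*}$--topology is weak enough to admit the above comparison while still being strong enough to separate the candidate limits, and this is precisely ensured by the continuous embedding $\lebe^{1}\hookrightarrow (\sobo_{0}^{1,\infty})^{*}$ on the bounded Lipschitz domain $\Omega$.
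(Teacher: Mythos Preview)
Your proof is correct and follows essentially the same route as the paper: extract a subsequence in the Dirichlet class using the coercive lower bound, upgrade the $\lebe^{p}$-bound on $\sg(u_{k})$ to a $\sobo^{1,p}$-bound via Korn's inequality (valid since $p>1$), pass to a weak $\sobo^{1,p}$-limit $v$, identify $v=u$ through the compact embedding into $\lebe^{1}\hookrightarrow (\sobo_{0}^{1,\infty})^{*}$, and conclude by weak lower semicontinuity of convex integral functionals. The only cosmetic difference is that the paper goes through the symmetric-gradient Poincar\'{e} inequality before invoking Korn, whereas you apply Korn on $\sobo_{0}^{1,p}$ directly; both are equivalent here.
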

\begin{proof}
Let $u,u_{1},u_{2},...\in (\sobo_{0}^{1,\infty})^{*}(\Omega;\R^{n})$ be such that $u_{k}\to u$ strongly in $(\sobo_{0}^{1,\infty}(\Omega;\R^{n}))^{*}$ as $k\to\infty$. If $\liminf_{l\to\infty}\mathscr{F}[u_{k(l)}]=\infty$, we are done and so we may assume without loss of generality that there exists a subsequence $(u_{k(l)})\subset (u_{k})$ such that $(u_{k(l)})\subset u_{0}+\sobo_{0}^{1,p}(\Omega;\R^{n})$ and $\liminf_{l\to\infty}\mathscr{F}[u_{k(l)}]<\infty$. We now choose another subsequence $(u_{k(l(i))})\subset (u_{k(l)})$ such that $\lim_{i\to\infty}\mathscr{F}[u_{k(l(i))}]=\liminf_{l\to\infty}\mathscr{F}[u_{k(l)}]$, and put $U_{i}:=u_{k(l(i))}$ for $i\in\mathbb{N}$. By virtue of the coercive bound on $F$, we deduce that $(\sg(U_{i}))$ is bounded in $\lebe^{p}(\Omega;\R^{n})$ and so, writing $U_{i}=u_{0}+v_{i}$ with $v_{i}\in\sobo_{0}^{1,p}(\Omega;\R^{n})$ for each $i\in\mathbb{N}$, we use the symmetric gradient variant of Poincar\'{e}'s inequality in $\sobo_{0}^{1,p}$ to obtain
\begin{align*}
\int_{\Omega}|U_{i}|^{p}\dif x \lesssim \int_{\Omega}|u_{0}|^{p}+|v_{i}|^{p}\dif x  \lesssim \int_{\Omega}|u_{0}|^{p}\dif x +  \int_{\Omega}|\sg(v_{i})|^{p}\dif x \leq C\qquad\text{for all}\;i\in\mathbb{N}. 
\end{align*}
As $p>1$, we may use Korn's inequality to deduce that $(U_{i})$ has a subsequence $(U_{i(m)})$ which converges weakly to some $v\in\sobo^{1,p}(\Omega;\R^{n})$ and, by continuity of the trace operator on $\sobo^{1,p}$ with respect to weak convergence, $v\in u_{0}+\sobo_{0}^{1,p}(\Omega;\R^{n})$, too. Since $\Omega$ is assumed to be Lipschitz throughout, using the compact embedding $\sobo^{1,p}(\Omega;\R^{n})\hookrightarrow\hookrightarrow\lebe^{p}(\Omega;\R^{n})$ and passing to a further subsequence if necessary, we can even assume strong convergence $u_{k(l)}\to v$ in $\lebe^{p}(\Omega;\R^{n})$ as $l\to\infty$. Since $\lebe^{p}(\Omega;\R^{n})\hookrightarrow(\sobo_{0}^{1,\infty}(\Omega;\R^{n}))^{*}$ as $\lebe^{p}\hookrightarrow \lebe^{1}\hookrightarrow (\sobo_{0}^{1,\infty})^{*}$, we conclude that $u=v$ $\mathscr{L}^{n}$--a.e.. Now, by virtue of the growth bound \eqref{eq:auxgrowthbd} and convexity of $F$, standard arguments\footnote{In fact, using convexity and Korn's inequality, this follows as in the case for convex $p$-growth full gradient functionals. Alternatively, using that the convex variational integral $\mathscr{F}|_{\sobo^{1,p}}$ is $\mathcal{A}$-quasiconvex with $\mathcal{A}=\curl\curl$ in the terminology of \textsc{Fonseca \& M\"{u}ller} {\cite{FM1}}, the statement follows from \cite[Thm.~3.7]{FM1}, too.} yield lower semicontinuity of $\mathscr{F}|_{\sobo^{1,p}(\Omega;\R^{n})}$ with respect to weak convergence on $\sobo^{1,p}(\Omega;\R^{n})$. Summarising, we then deduce 
\begin{align*}
\mathscr{F}[u] = \mathscr{F}[v] & \leq \liminf_{m\to\infty} \mathscr{F}[U_{i(m)}] \stackrel{(U_{i(m)})\subset (u_{k(l(i))})}{=} \lim_{i\to\infty}\mathscr{F}[u_{k(l(i))}]=\liminf_{l\to\infty}\mathscr{F}[u_{k(l)}]. 
\end{align*}
The proof is complete. 
\end{proof}
Next, a lemma on the growth behaviour of $\mu$-elliptic integrands; recall that by \emph{linear growth} we understand condition~\eqref{eq:lg} throughout. 
\begin{lemma}\label{lem:muellbound}
Let $f\in\hold^{2}(\R_{\sym}^{n\times n})$ be a convex integrand of linear growth with $1<\mu<\infty$. Then, for any $\theta > 0$ there exists $c_{\theta},C_{\theta}>0$ and $\vartheta>0$ such that 
\begin{align*} 
& \theta|\cdot|^{2}-\vartheta\leq f+\theta|\cdot|^{2}\leq C_{\theta}(1+|\xi|^{2}) \qquad\text{for all}\;\xi\in\R_{\sym}^{n\times n}.
\end{align*}
\end{lemma}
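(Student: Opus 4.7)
The plan is to observe that the conclusion is a purely elementary consequence of the linear growth condition \eqref{eq:lg}; in particular, the $\mu$-ellipticity hypothesis \eqref{eq:muell} is not needed, and convexity enters only through the a priori bound $f(\xi)\geq c_{0}|\xi|-c_{2}\geq -c_{2}$ coming from the lower estimate in \eqref{eq:lg}.

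For the lower bound I would simply note that \eqref{eq:lg} yields $f(\xi)\geq -c_{2}$ for every $\xi\in\rsym$. Adding $\theta|\xi|^{2}$ to both sides and choosing $\vartheta:=\max\{c_{2},1\}>0$ gives $f(\xi)+\theta|\xi|^{2}\geq \theta|\xi|^{2}-\vartheta$, as required.

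For the upper bound I would use the upper linear growth estimate $f(\xi)\leq c_{1}(1+|\xi|)$ from \eqref{eq:lg} and apply Young's inequality in the form $c_{1}|\xi|\leq \tfrac{c_{1}^{2}}{2}+\tfrac{1}{2}|\xi|^{2}$ to turn the linear term into a quadratic one. This yields
\begin{equation*}
f(\xi)+\theta|\xi|^{2}\,\leq\, c_{1}+\tfrac{c_{1}^{2}}{2}+\Bigl(\theta+\tfrac{1}{2}\Bigr)|\xi|^{2}\,\leq\, C_{\theta}\bigl(1+|\xi|^{2}\bigr),
\end{equation*}
where, for instance, $C_{\theta}:=\max\bigl\{c_{1}+\tfrac{c_{1}^{2}}{2},\,\theta+\tfrac{1}{2}\bigr\}$ does the job.

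Since both steps are just bookkeeping with the growth constants from \eqref{eq:lg} and a single application of Young's inequality, there is no serious obstacle; the only point to be careful about is recording the dependence of $C_{\theta}$ and $\vartheta$ on $\theta$, $c_{0}$, $c_{1}$, $c_{2}$. The role of the lemma in the sequel is presumably to guarantee that the stabilised integrand $f+\theta|\cdot|^{2}$ lies in the hypothesis class of Lemma~\ref{lem:lsc1} (with $p=2$), so that the Ekeland step in Section~\ref{sec:ekeland} can be carried out.
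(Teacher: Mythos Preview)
Your proposal is correct and matches the paper's approach exactly: the paper simply states that the lemma ``follows immediately from \eqref{eq:lg}'', and your argument is precisely the elementary verification of this, using the lower bound $f\geq -c_{2}$ and the upper bound $f\leq c_{1}(1+|\cdot|)$ together with Young's inequality. Your remark that $\mu$-ellipticity is not actually needed here, and your identification of the lemma's role (feeding into Lemma~\ref{lem:lsc1} with $p=2$ for the Ekeland construction), are both accurate.
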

The proof of this statement follows immediately from \eqref{eq:lg}. We now come to the precise construction of a good approximation of a given generalised minimiser. Here we follow closely \cite{BS1} with the requisite modifications. Let hereafter $u\in\bd(\Omega)$ be a generalised minimiser of $\mathfrak{F}$. Then, by Proposition~\eqref{eq:nogaps}, we have $\inf\mathfrak{F}[\mathscr{D}]=\min\overline{\mathfrak{F}}[\bd(\Omega)]$. Here, $\mathscr{D}:=u_{0}+\ld_{0}(\Omega)$, and $\overline{\mathfrak{F}}$ is given by \eqref{eq:relaxed} (note that we suppress the subscript $u_{0}$ for notational brievity). We then find a sequence $(w_{k})\subset \mathscr{D}$ such that 
\begin{align}
w_{k}\to u\;\;\text{in}\;\lebe^{1}(\Omega;\R^{n})\;\;\text{and}\;\;(\mathscr{L}^{n},\E w_{k})\to (\mathscr{L}^{n},\E u)\;\;\text{strictly}
\end{align}
as $k\to\infty$. By the \textsc{Reshetnyak} continuity theorem (see Proposition~\ref{prop:Reshetnyak}) and $w_{k}\in \ld(\Omega)$ for all $k\in\mathbb{N}$, we deduce that $\mathfrak{F}[w_{k}]\to \inf \mathfrak{F}[\mathscr{D}]$ as $k\to\infty$ so that $(w_{k})$ indeed is a minimising sequence for $\mathfrak{F}$. Moreover, possibly passing to a subsequence, we may assume that 
\begin{align}\label{eq:closenessass}
\mathfrak{F}[w_{k}]\leq \inf \mathfrak{F}[\mathscr{D}]+\frac{1}{8k^{2}}\qquad\text{for all}\;k\in\mathbb{N}. 
\end{align} 
Next recall that, due to $\mu$--ellipticity and linear growth, $f$ is Lipschitz with some Lipschitz constant $L>0$. 
As to the boundary values, we find a sequence $(u_{k}^{\partial\Omega})\subset \sobo^{1,2}(\Omega;\R^{n})$ satisfying 
\begin{align}\label{eq:bdryapprox}
\|u_{k}^{\partial\Omega}-u_{0}\|_{\ld(\Omega)}\leq \frac{1}{8Lk^{2}}\qquad\text{for all}\;k\in\mathbb{N}
\end{align}
and thus, putting $\mathscr{D}_{k}:=u_{k}^{\partial\Omega}+\sobo_{0}^{1,2}(\Omega;\R^{n})$, we deduce by $w_{k}\in u_{0}+\ld_{0}(\Omega)$ that there exists a sequence $(v_{k})\subset \mathscr{D}_{k}$ such that 
\begin{align*}
\|(v_{k}-u_{k}^{\partial\Omega})-(w_{k}-u_{0})\|_{\ld(\Omega)}\leq \frac{1}{8Lk^{2}}
\end{align*}
and hence 
\begin{align}\label{eq:LDapproximation1}
\|v_{k}-w_{k}\|_{\ld(\Omega)}\leq \frac{1}{4Lk^{2}}\qquad\text{for all}\;k\in\mathbb{N}.
\end{align}
Note that, relying on the extension results from Section~\ref{sec:bd}, such an approximating sequence for the boundary values can be obtained by first extending the boundary values to an $\ld$-map on the entire $\R^{n}$ and then mollifying. Now, since $f$ is Lipschitz with constant $L$, we firstly calculate for arbitrary $\psi\in\sobo_{0}^{1,2}(\Omega;\R^{n})$
\begin{align*}
\inf_{u_{0}+\sobo_{0}^{1,2}(\Omega;\R^{n})}\mathfrak{F} & \leq \mathfrak{F}[u_{0}+\psi]  = \left(\int_{\Omega}f(\sg(u_{0}+\psi))-f(\sg(u_{k}^{\partial\Omega}+\psi))\dif x\right) + \int_{\Omega}f(\sg(u_{k}^{\partial\Omega}+\psi))\dif x\\ & \leq L \int_{\Omega}|u_{0}-u_{k}^{\partial\Omega}|\dif x + \int_{\Omega}f(\sg(u_{k}^{\partial\Omega}+\psi))\dif x  \leq \frac{1}{8k^{2}} + \int_{\Omega}f(\sg(u_{k}^{\partial\Omega}+\psi))\dif x
\end{align*}
so that infimisation over $\psi\in\sobo_{0}^{1,2}(\Omega;\R^{n})$ yields 
\begin{align}\label{eq:Dirichletcompare}
\inf_{\mathscr{D}}\mathfrak{F} = \inf_{u_{0}+\sobo_{0}^{1,2}(\Omega;\R^{n})}\mathfrak{F} \leq \inf_{\mathscr{D}_{k}}\mathfrak{F} + \frac{1}{8k^{2}}. 
\end{align}
Here we have used that, by smooth approximation, $u_{0}+\sobo_{0}^{1,2}(\Omega;\R^{n})$ is norm-dense in $\mathscr{D}$. Thus, again using that $f$ has Lipschitz constant $L$ in conjunction with \eqref{eq:LDapproximation1} in the first, \eqref{eq:closenessass} in the second and \eqref{eq:Dirichletcompare} in the last step, we eventually obtain
\begin{align}\label{eq:bbound}
\mathfrak{F}[v_{k}] \leq \mathfrak{F}[w_{k}]+\frac{1}{4k^{2}}\leq \inf \mathfrak{F}[\mathscr{D}]+\frac{3}{8k^{2}}\leq \inf \mathfrak{F}[\mathscr{D}_{k}]+\frac{1}{2k^{2}}
\end{align}
for all $k\in\mathbb{N}$. Now put, for $k\in\mathbb{N}$,
\begin{align}\label{eq:defAk}
\begin{split}
&f_{k}(\xi):=f(\xi)+\frac{1}{2k^{2}A_{k}}(1+|\xi|^{2}),\qquad\xi\in\R_{\sym}^{n\times n},\\
&A_{k}:=1+\int_{\Omega}\big(1+|\sg(v_{k})|^{2}\big)\dif x.
\end{split}
\end{align}
We define 
\begin{align*}
\mathfrak{F}_{k}[w]:=\begin{cases} \displaystyle\int_{\Omega}f_{k}(\sg(w))\dif x &\;\text{provided}\;w\in\mathscr{D}_{k}\\
+\infty&\;\text{provided}\;w\in (\sobo_{0}^{1,\infty}(\Omega;\R^{n}))^{*}\setminus\mathscr{D}_{k}.
\end{cases}
\end{align*}
Now we aim to apply Lemma~\ref{lem:lsc1} for each $k\in\mathbb{N}$ to the particular choice $p=2$ and $F=f_{k}$. In combination with Lemma~\ref{lem:muellbound}, it is then routine to check that the assumptions of Lemma~\ref{lem:lsc1} are in fact satisfied and hence each $\mathfrak{F}_{k}$ is lower semicontinuous with respect to norm convergence in $(\sobo_{0}^{1,\infty}(\Omega;\R^{n}))^{*}$. Moreover, we find because of $v_{k}\in\mathscr{D}_{k}$ in the first, \eqref{eq:bbound} in the second and by definition of $\mathfrak{F}_{k}$ in the third step
\begin{align}\label{eq:negunifbound}
\mathfrak{F}_{k}[v_{k}]\leq \mathfrak{F}[v_{k}]+\frac{1}{2k^{2}} \stackrel{\eqref{eq:bbound}}{\leq} \inf\mathfrak{F}[\mathscr{D}_{k}]+\frac{1}{k^{2}}\leq \inf \mathfrak{F}_{k}[(\sobo_{0}^{1,\infty}(\Omega;\R^{n}))^{*}]+\frac{1}{k^{2}}. 
\end{align}
We are now in position to apply Ekeland's variational principle, Lemma \ref{lem:ekeland}, to find a sequence $(u_{k})\subset(\sobo_{0}^{1,\infty}(\Omega;\R^{n}))^{*}$ such that 
\begin{align}\label{eq:intermed00}
\begin{split}
&\|u_{k}-v_{k}\|_{(\sobo_{0}^{1,\infty}(\Omega;\R^{n}))^{*}}\leq \frac{1}{k},\\
&\mathfrak{F}_{k}[u_{k}] \leq \mathfrak{F}_{k}[w]+\frac{1}{k}\|w-u_{k}\|_{(\sobo_{0}^{1,\infty}(\Omega;\R^{n}))^{*}}\;\;\text{for all}\;w\in(\sobo_{0}^{1,\infty})^{*}(\Omega;\R^{n})\;\text{and}\;k\in\mathbb{N}. 
\end{split}
\end{align}
Applying the second part of \eqref{eq:intermed00} to $w=v_{k}$, we then find for some $\ell>0$
\begin{align*}
\int_{\Omega}|\sg(u_{k})|\dif x &  \stackrel{\text{Lemma}~\ref{lem:muellbound}}{\leq} \mathfrak{F}[u_{k}]+\ell \leq \mathfrak{F}_{k}[u_{k}]+\ell \\ & \,\;\stackrel{\eqref{eq:intermed00}(ii)}{\leq}  \mathfrak{F}_{k}[v_{k}]+\frac{1}{k}\|v_{k}-u_{k}\|_{(\sobo_{0}^{1,\infty}(\Omega;\R^{n}))^{*}} + \ell \stackrel{\eqref{eq:intermed00}(i)}{\leq}  \mathfrak{F}_{k}[v_{k}]+\frac{1}{k^{2}}+\ell\\
& \;\;\;\;\stackrel{\eqref{eq:negunifbound}}{\leq} \inf \mathfrak{F}_{k}[(\sobo_{0}^{1,\infty}(\Omega;\R^{n}))^{*}]+\frac{2}{k^{2}}+\ell \leq C, 
\end{align*}
where $C>0$ is a finite constant independent of $k\in\mathbb{N}$; note that we clearly have that $\inf \mathfrak{F}_{k}[(\sobo_{0}^{1,\infty}(\Omega;\R^{n}))^{*}]<\infty$.
In particular, we deduce that 
\begin{align}\label{eq:LDbound}
(u_{k})\;\;\text{is uniformly bounded in}\;\ld(\Omega). 
\end{align}
Finally, we record the following lemma on the perturbed Euler--Lagrange equations satisfied by the individual $u_{k}$'s.
\begin{lemma}[Approximate Euler--Lagrange Equation]\label{lem:eulerlagrange}
Let $f_{k}$ and $u_{k}$ be defined as above. Then for all $k\in\mathbb{N}$ we have 
\begin{align}\label{eq:elmain}
\left\vert \int_{\Omega}\langle f'_{k}(\sg(u_{k})),\sg(\varphi)\rangle\dif x \right\vert \leq \frac{1}{k}\|\varphi\|_{(\sobo_{0}^{1,\infty}(\Omega;\R^{n}))^{*}}
\end{align}
for all $\varphi\in\sobo_{0}^{1,2}(\Omega;\R^{n})$.
\end{lemma}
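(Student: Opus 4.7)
The plan is to derive \eqref{eq:elmain} as the standard perturbed Euler--Lagrange equation associated with the Ekeland quasi-minimality property \eqref{eq:intermed00}(ii). Given $\varphi\in\sobo_{0}^{1,2}(\Omega;\R^{n})$ and $t>0$, I would test \eqref{eq:intermed00}(ii) against the competitor $w=u_{k}+t\varphi$, use $\|w-u_{k}\|_{(\sobo_{0}^{1,\infty})^{*}}=t\|\varphi\|_{(\sobo_{0}^{1,\infty})^{*}}$, divide by $t$, and pass to the limit $t\searrow 0$ to obtain a one-sided bound. Combining this bound with its analogue for $-\varphi$ then produces the two-sided estimate \eqref{eq:elmain}.

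Before attempting this, I would first verify that $u_{k}$ lies in the effective domain $\mathscr{D}_{k}=u_{k}^{\partial\Omega}+\sobo_{0}^{1,2}(\Omega;\R^{n})$. Indeed, applying \eqref{eq:intermed00}(ii) with $w=v_{k}\in\mathscr{D}_{k}$ (where $\mathfrak{F}_{k}[v_{k}]<\infty$) together with \eqref{eq:intermed00}(i) forces $\mathfrak{F}_{k}[u_{k}]<\infty$, and the $+\infty$ extension of $\mathfrak{F}_{k}$ outside $\mathscr{D}_{k}$ then guarantees $u_{k}\in\mathscr{D}_{k}$; in particular $\sg(u_{k})\in\lebe^{2}(\Omega;\R_{\sym}^{n\times n})$. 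Consequently $u_{k}+t\varphi\in\mathscr{D}_{k}$ for every $t\in\R$, so $\mathfrak{F}_{k}[u_{k}+t\varphi]$ is finite and the difference quotient
\begin{align*}
\frac{\mathfrak{F}_{k}[u_{k}+t\varphi]-\mathfrak{F}_{k}[u_{k}]}{t}\geq -\frac{1}{k}\|\varphi\|_{(\sobo_{0}^{1,\infty}(\Omega;\R^{n}))^{*}}
\end{align*}
is meaningful and in fact an integral inequality over $\sg(u_{k}),\sg(\varphi)\in\lebe^{2}$.

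The remaining step is to differentiate under the integral sign at $t=0$. I would invoke the quadratic upper bound on $f_{k}$ supplied by Lemma~\ref{lem:muellbound} (combined with convexity) to deduce $|f_{k}'(\xi)|\leq c_{k}(1+|\xi|)$; the $\hold^{2}$-hypothesis then yields a pointwise limit of the integrand at $t=0$, and the bound
\begin{align*}
\Bigl\vert\tfrac{1}{t}\bigl(f_{k}(\sg(u_{k})+t\sg(\varphi))-f_{k}(\sg(u_{k}))\bigr)\Bigr\vert\leq c_{k}\bigl(1+|\sg(u_{k})|+|\sg(\varphi)|\bigr)|\sg(\varphi)|\in\lebe^{1}(\Omega)
\end{align*}
supplies an integrable dominant uniformly in $t\in(0,1)$, so dominated convergence applies. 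Letting $t\searrow 0$ therefore yields $\int_{\Omega}\langle f_{k}'(\sg(u_{k})),\sg(\varphi)\rangle\dif x\geq -\tfrac{1}{k}\|\varphi\|_{(\sobo_{0}^{1,\infty})^{*}}$, and repeating the argument with $-\varphi$ in place of $\varphi$ completes the proof.

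I do not anticipate any genuine obstacle: the quadratic regularisation $\tfrac{1}{2k^{2}A_{k}}|\xi|^{2}$ forces $\sg(u_{k})\in\lebe^{2}$, and all manipulations therefore take place in a comfortable Hilbertian setting where the only delicacy is the routine verification of the domination step. The non-reflexivity of the ambient metric space $(\sobo_{0}^{1,\infty})^{*}$ plays no essential role in this argument, because the actual variations $u_{k}+t\varphi$ never leave the affine $\sobo^{1,2}$-class $\mathscr{D}_{k}$.
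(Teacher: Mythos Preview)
Your proposal is correct and follows essentially the same route as the paper's proof: test the Ekeland quasi-minimality \eqref{eq:intermed00}(ii) with $w=u_{k}\pm t\varphi$, divide by $t$, and let $t\searrow 0$ to obtain the two one-sided bounds. You are in fact more careful than the paper in explicitly checking that $u_{k}\in\mathscr{D}_{k}$ (which the paper uses implicitly when writing $u_{k}\pm\varepsilon\varphi\in\mathscr{D}_{k}$) and in justifying the passage to the limit via an explicit integrable dominant; the paper simply records the limit without comment.
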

\begin{proof}
Fix $k\in\mathbb{N}$ and let $\varphi\in\sobo_{0}^{1,2}(\Omega;\R^{n})$ be arbitrary. Then for every $\varepsilon>0$ we have $u_{k}\pm\varepsilon\varphi\in\mathscr{D}_{k}$. Consequently, we obtain by the second line of \eqref{eq:intermed00} 
\begin{align*}
\mathfrak{F}_{k}[u_{k}]-\mathfrak{F}_{k}[u_{k}\pm\varepsilon\varphi]\leq \frac{\varepsilon}{k}\|\varphi\|_{(\sobo_{0}^{1,\infty}(\Omega;\R^{n}))^{*}}. 
\end{align*}
This gives 
\begin{align*}
-\frac{1}{k}\|\varphi\|_{(\sobo_{0}^{1,\infty}(\Omega;\R^{n}))^{*}}& \leq \frac{\mathfrak{F}_{k}[u_{k}\pm\varepsilon\varphi]-\mathfrak{F}_{k}[u_{k}]}{\varepsilon} \stackrel{\varepsilon\searrow 0}{\longrightarrow} \pm\left(\int_{\Omega}\langle f'_{k}(\sg(u_{k})),\sg(\varphi)\rangle\dif x \right)
\end{align*}
from which \eqref{eq:elmain} follows at once. 
\end{proof}
To explain the advantages of the technically slightly intricate construction of the particular minimising sequence $(u_{k})$, let us first make the following
\begin{remark}[Euler--Lagrange for Measures]\label{rem:anzellotti}
{\normalfont It is posssible to directly work on the Euler--Lagrange equation satisfied by a generalised minimiser $u\in\gm(\mathfrak{F})$. Indeed, transferring \textsc{Anzellotti}'s work \cite{Anz} to functionals of type \eqref{eq:main}, one is able to show that 
\begin{align*}
\int_{\Omega}\langle f'(\E^{a}u),\E^{a}\varphi\rangle\dif x & + \int_{\Omega}\left\langle (f^{\infty})'\left(\frac{\dif \E^{s}u}{\dif |\E^{s}u|}\right),\frac{\dif \E^{s}\varphi}{\dif|\E^{s}\varphi|}\right\rangle\dif|\E^{s}\varphi| \\ & = \int_{\partial\Omega}\left\langle (f^{\infty})'\left(\frac{u_{0}-u}{|u_{0}-u|}\odot\nu_{\partial\Omega}\right),\varphi\odot\nu_{\partial\Omega} \right\rangle\dif\mathcal{H}^{n-1}, 
\end{align*}
for all $\varphi\in\bd(\Omega)$ with $|\E^{s}\varphi|\ll|\E^{s}u|$ such that $\varphi(x)=0$ $\mathcal{H}^{n-1}$--a.e. on $\{x\in\partial\Omega\colon u(x)=u_{0}(x)\}$, where $\nu_{\partial\Omega}$ is the outward unit normal to $\partial\Omega$. However, it seems difficult to apply the difference quotient technique directly on the Euler-Lagrange equation for measures so that we rather choose approximation procedures.}
\end{remark}
To conclude with, note that Lemma \ref{lem:eulerlagrange} enables us to work with difference quotients applied to functions and to eventually deduce uniform estimates for the single $u_{k}$'s. In particular, by arbitariness of the generalised minimiser $u\in\gm(\mathfrak{F})$ as was assumed in this section, we have constructed a sequence converging to $u$ in a suitable sense, and hence uniform estimates on the $u_{k}$'s will be inherited by $u$. Note that by starting from an arbitrary $u\in\gm(\mathfrak{F})$, regularity for all generalised minimisers will hereby be established. 
\subsection{On Projections onto $\mathcal{R}$}
In this intermediate section, we give a technical result which might be clear to the experts though hard to trace in the literature.
\begin{lemma}\label{lem:rigidchoose}
Let $U\subset\R^{n}$ be an open, bounded and connected set with Lipschitz boundary. Let $1<p<\infty$. Then for every $1\leq q \leq p$ there exists a finite constant $c_{q}>0$ such that for all $u\in\sobo^{1,p}(U;\R^{n})$ there exists $b\in\mathcal{R}(U)$ such that
\begin{align}\label{eq:Korncrucial}
& \int_{U}|u-b|^{q}\dif x \leq c_{q} \int_{U}|\sg(u)|^{q}\dif x\;\;\;\text{and}\;\;\int_{U}|\D\,(u-b)|^{p}\dif x \leq c_{p} \int_{U}|\sg(u)|^{p}\dif x.
\end{align}
\end{lemma}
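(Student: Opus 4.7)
The strategy is to select $b=Pu$ for a single linear projection $P\colon L^{1}(U;\R^{n})\to\mathcal{R}(U)$ independent of $q$, and then verify that this common choice works across the whole range $1\leq q\leq p$. A convenient choice of $P$ is the projection characterised by the orthogonality relations $\int_{U}(u-Pu)\cdot r\,\dif x=0$ for every $r\in\mathcal{R}(U)$, which is well-defined on $L^{1}(U;\R^{n})$ because $\mathcal{R}(U)\subset L^{\infty}(U;\R^{n})$; since $\mathcal{R}(U)$ is finite-dimensional, $P$ is bounded on $L^{s}(U;\R^{n})$ for every $s\in[1,\infty]$, and all $L^{s}$-norms on $\mathcal{R}(U)$ are mutually equivalent.

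For the first estimate in the range $q\in(1,p]$, I would run a standard Peetre--Tartar-type compactness argument on $\ker P\cap\sobo^{1,q}(U;\R^{n})$: assuming by contradiction that no such $c_{q}$ exists, a rescaling yields a sequence $(v_{k})\subset\ker P$ with $\|v_{k}\|_{L^{q}}=1$ and $\|\sg(v_{k})\|_{L^{q}}\to 0$. The classical $L^{q}$-Korn second inequality (available since $q>1$) bounds $(v_{k})$ in $\sobo^{1,q}(U;\R^{n})$; Rellich--Kondrachov produces a strong $L^{q}$-limit $v_{\infty}$, lower semicontinuity of $\sg$ forces $\sg(v_{\infty})=0$, and the closedness of $\ker P$ in $L^{q}$ then yields $v_{\infty}\in\ker P\cap\mathcal{R}(U)=\{0\}$, contradicting $\|v_{\infty}\|_{L^{q}}=1$. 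The second (full-gradient) estimate then follows by applying the $L^{p}$-Korn second inequality to $u-b$ and combining with the $q=p$ case just proved, using $\sg(b)=0$.

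The delicate step is the endpoint $q=1$, where Korn's inequality fails in $L^{1}$ and the compactness argument cannot be carried out in $\sobo^{1,1}$. Here I would instead invoke the $\bd$-Poincar\'{e} inequality already used in the proof of Lemma~\ref{lem:bdembedding}: for each $u\in\bd(U)$ it furnishes a rigid motion $r_{u}\in\mathcal{R}(U)$ with $\|u-r_{u}\|_{L^{n/(n-1)}(U;\R^{n})}\leq c\,|\E u|(U)$. Applied to our $u\in\sobo^{1,p}(U;\R^{n})\subset\bd(U)$, for which $|\E u|(U)=\|\sg(u)\|_{L^{1}}$, the triangle inequality together with the identity $r_{u}-Pu=P(u-r_{u})$ and the boundedness of $P$ on $L^{n/(n-1)}(U;\R^{n})$ gives $\|u-Pu\|_{L^{n/(n-1)}}\leq c\,\|\sg(u)\|_{L^{1}}$. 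H\"{o}lder's inequality on the bounded set $U$ then delivers the desired $L^{1}$-estimate, completing the proof.
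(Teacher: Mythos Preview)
Your proof is correct and follows essentially the same architecture as the paper: both arguments fix the single $L^{2}$-orthogonal projection $P=\Pi$ onto $\mathcal{R}(U)$ (well-defined on $L^{1}$ since $\mathcal{R}(U)\subset L^{\infty}$), observe that it is $L^{s}$-stable for all $s$, and derive the second inequality from the classical $L^{p}$-Korn inequality together with the first inequality at $q=p$. The only difference lies in how the Poincar\'{e}-type estimate $\|v\|_{L^{q}}\leq c_{q}\|\sg(v)\|_{L^{q}}$ on $\ker P$ is justified: the paper simply cites the corresponding result from \textsc{Fuchs--Seregin}~\cite[Eq.~(3.25)ff.]{FS}, which covers all $1\leq q<\infty$ (the case $q=1$ via the space $\ld$), whereas you supply a self-contained argument---Peetre--Tartar compactness for $q>1$ and, at the endpoint $q=1$, a reduction to the $\bd$-Poincar\'{e} inequality via the identity $u-Pu=(u-r_{u})-P(u-r_{u})$. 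Your endpoint treatment is a clean way to avoid appealing to the $\ld$-theory directly; conversely, the citation route is shorter and makes the uniformity in $q$ immediate. Either way, the substance is the same.
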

The key in this lemma is that we can choose \emph{one} particular rigid deformation to validate both inequalities.
\begin{proof}
Denote $X_{p}(U)$ either $\sobo^{1,p}(U;\R^{n})$ provided $1<p<\infty$ or $\ld(U)$ provided $p=1$ and denote 
\begin{align}
\mathcal{R}_{X_{p}}^{\bot}(U):=\left\{\varphi\in X_{p}(\Omega)\colon\;\int_{U}\langle \varphi,\psi\rangle \dif x = 0\;\;\text{for all}\;\psi\in\mathcal{R}(\Omega) \right\}. 
\end{align}
Then, by straightforward adaptation of \cite[Eq.~(3.25)ff.]{FS}, we find that for every $1\leq p < \infty$ there exists $c_{p}>0$ such that $\|v\|_{\lebe^{p}(U;\R^{n})}\leq c_{p}\|\sg(v)\|_{\lebe^{p}(U;\R^{n\times n})}$ holds for all $\mathcal{R}_{X_{p}}^{\bot}(U)$. We now consider an $\lebe^{2}$-orthonormal basis $\{b_{1},...,b_{m}\}$ of the finite dimensional space $\mathcal{R}(\Omega)$. We then define the $\lebe^{2}$-orthogonal projections $\Pi\colon\lebe^{2}(\Omega;\R^{n})\to\mathcal{R}(\Omega)$ by 
\begin{align}
\Pi\varphi :=\sum_{j=1}^{m}\langle b_{j},\varphi\rangle_{\lebe^{2}(\Omega;\R^{n})}b_{j},\qquad\varphi\in\lebe^{2}(\Omega;\R^{n}).
\end{align}
Note that, since $\mathcal{R}(\Omega)$ consists of affine-linear polynomials and so $\mathcal{R}(\Omega)\subset\lebe^{\infty}(\Omega;\R^{n})$, $\Pi\varphi$ is also well-defined for $\varphi\in\lebe^{1}(\Omega;\R^{n})$. We moreover have for all $1\leq p \leq \infty$ 
\begin{align*}
\int_{\Omega}|\Pi\varphi|^{p}\dif x & \leq \Big( \sum_{j=1}^{m}\|b_{j}\|_{\lebe^{\infty}(\Omega;\R^{n})}^{2}\Big)\Big(\int_{\Omega}|\varphi|^{p}\dif x\Big)^{\frac{1}{p}},
\end{align*}
and from here we see that $\Pi$ is indeed $\lebe^{p}$-stable for all $1\leq p<\infty$. 
Let now $u\in\sobo^{1,p}(U;\R^{n})$ for $1<p<\infty$, so that, in particular, $u\in\ld(U)$. We then have $u-\Pi u\in \mathcal{R}_{X_{p}}^{\bot}(U)$ \emph{regardless of $p$} and hence deduce the first part of \eqref{eq:Korncrucial}. For the second one, recall that by Korn's inequality, $\|\D u\|_{\lebe^{p}(U;\R^{n\times n})}\leq C ( \|u\|_{\lebe^{p}(U;\R^{n})}+\|\sg(u)\|_{\lebe^{p}(U;\R^{n})})$. Replacing $u$ by $u-\Pi u$ in this inequality and invoking the first part of \eqref{eq:Korncrucial}, we establish the second part of \eqref{eq:Korncrucial} and the proof is complete. 
\end{proof}

\subsection{Proof of Theorem~\ref{thm:main0}}\label{sec:thmmain0}
We now turn to the proof of Theorem~\ref{thm:main1} and begin with the following auxiliary lemma; recall that $\tau_{s,h},\tau_{s,h}^{+},\tau_{s,h}^{-}$ are defined in Section~\ref{sec:notation}.
\begin{lemma}\label{lem:Valpha1}
Let $f\in\hold^{2}(\R_{\sym}^{n\times n})$ be a $\mu$-elliptic integrand, $1<\mu<\infty$, and define for $1<\alpha<2$ the auxiliary map $V_{\alpha}$ by \eqref{eq:vdef}. Then there exists a constant $C=C(\alpha,n)>0$ such that for all $u\in \lebe_{\locc}^{1}(\Omega;\R^{n})$, all relatively compact Lipschitz subsets $K\Subset\Omega$, all $h\in\R$ with $|h|<\dista(K,\partial\Omega)$, $s\in\{1,...,n\}$ and $\mathscr{L}^{n}$-a.e. $x\in K$ there holds 
\begin{align}
\frac{|\tau_{s,h}V_{\alpha}(u(x))|^{2}}{(1+|u(x+he_{s})|^{2}+|u(x)|^{2})^{\frac{2(1-\alpha)+\mu}{2}}}\leq C\frac{|\tau_{s,h}u(x)|^{2}}{(1+|u(x)|^{2}+|u(x+he_{s})|^{2})^{\frac{\mu}{2}}}.
\end{align}
\end{lemma}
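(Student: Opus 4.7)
The plan is to obtain this as an immediate arithmetic consequence of the first assertion in Lemma~\ref{lem:valpha}. Since $u\in\lebe_{\locc}^{1}(\Omega;\R^{n})$ and $|h|<\dista(K,\partial\Omega)$, both $u(x)$ and $u(x+he_{s})$ are defined for $\mathscr{L}^{n}$-a.e.\ $x\in K$, so I would first invoke the pointwise equivalence from Lemma~\ref{lem:valpha} with $v:=u$, which yields
\begin{align*}
|\tau_{s,h}V_{\alpha}(u(x))|^{2}\sim \bigl(1+|u(x+he_{s})|^{2}+|u(x)|^{2}\bigr)^{1-\alpha}\,|\tau_{s,h}u(x)|^{2}
\end{align*}
with implicit constants depending only on $\alpha$ and $n$.

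Next, I would simply divide both sides of this relation by the weight $(1+|u(x+he_{s})|^{2}+|u(x)|^{2})^{(2(1-\alpha)+\mu)/2}$ appearing in the statement of Lemma~\ref{lem:Valpha1}. The exponents on the factor $1+|u(x+he_{s})|^{2}+|u(x)|^{2}$ then combine as
\begin{align*}
(1-\alpha)-\frac{2(1-\alpha)+\mu}{2}=-\frac{\mu}{2},
\end{align*}
which is precisely the exponent appearing on the right-hand side of the claimed inequality. Consequently the asserted estimate follows with the constant $C$ being (twice) the constant furnished by Lemma~\ref{lem:valpha}.

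No real obstacle is expected here: the statement is a purely algebraic reformulation of the known chain-rule-type estimate for $V_{\alpha}$, and the entire argument reduces to checking the bookkeeping of the exponents $1-\alpha$, $\mu/2$ and $(2(1-\alpha)+\mu)/2$. The only minor care needed is to ensure that the equivalence from Lemma~\ref{lem:valpha} is used as an upper bound (so that $\sim$ may be replaced by $\leq C$), and that the assumption $1<\alpha<2$ is invoked only through Lemma~\ref{lem:valpha}; in particular, the value of $\mu$ plays no role beyond contributing to the exponent on the denominator.
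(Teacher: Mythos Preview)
Your proposal is correct and follows essentially the same approach as the paper: both invoke the pointwise equivalence for $V_{\alpha}$ from Lemma~\ref{lem:valpha}, square it, and then divide by the common weight to obtain the stated inequality by elementary exponent arithmetic. The paper writes the resulting pointwise inequality in the equivalent form $\frac{|\xi-\eta|^{2}}{(1+|\eta|^{2}+|\xi|^{2})^{\mu/2}}\leq C\,\frac{|V_{\alpha}(\xi)-V_{\alpha}(\eta)|^{2}}{(1+|\xi|^{2}+|\eta|^{2})^{(\mu+2(1-\alpha))/2}}$ and then substitutes $\xi=u(x+he_{s})$, $\eta=u(x)$, but this is exactly your computation viewed from the other side of the two-sided equivalence.
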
 
\begin{proof}
We now use the auxiliary estimate given by ~Lemma~ \ref{lem:valpha} to conclude for the auxiliary function $V_{\alpha}(\xi):=(1+|\xi|^{2})^{\frac{1-\alpha}{2}}\xi$ with $1<\alpha<2$ that 
\begin{align*}
\frac{|\xi-\eta|^{2}}{(1+|\eta|^{2}+|\xi|^{2})^{\frac{\mu}{2}}}& \leq C \frac{|V_{\alpha}(\xi)-V_{\alpha}(\eta)|^{2}}{(1+|\xi|^{2}+|\eta|^{2})^{\frac{\mu+2(1-\alpha)}{2}}}. 
\end{align*}
Applying this to $\xi=u(x+he_{s})$ and $\eta=u(x)$, we conclude.
\end{proof}
After these preparations, we now come to the
\begin{proof}[Proof of Theorem~\ref{thm:main0}]
Let $\mu$ be as in the theorem. We then fix an arbitrary generalised minimiser $u\in\gm(\mathfrak{F};u_{0})$ and consider the sequence $(u_{k})$ constructed in Section~\ref{sec:ekeland}, cf. \eqref{eq:intermed00}. Let $k\in\mathbb{N}$ be arbitrary but fixed; then $u_{k}$ satisfies the approximate Euler-Lagrange equation~\eqref{eq:elmain}. Let $x_{0}\in\Omega$, $0<r<R<\dista(x_{0},\partial\Omega)$ and pick $\rho\in \hold_{c}^{1}(\ball(x_{0},R);[0,1])$ with $\mathbbm{1}_{\ball(x_{0},r)}\leq \rho \leq \mathbbm{1}_{\ball(x_{0},R)}$. Then, let $\Omega_{1}$ be the connected component of $\Omega$ that contains $x_{0}$; we may assume that $\Omega_{1}$ itself has Lipschitz boundary.  Due to Lemma~\ref{lem:rigidchoose} and the fact that $u_{k}|_{\Omega_{1}}\in\sobo^{1,2}(\Omega_{1};\R^{n})$, we first choose a rigid deformation $b_{k}\in\mathcal{R}(\Omega)$ such that with $c=c(\Omega_{1})$
\begin{align}\label{eq:rigidchoose}
\int_{\Omega_{1}}|u_{k}-b_{k}|\dif x \leq c\int_{\Omega_{1}}|\sg(u_{k})|\dif x\;\;\text{and}\;\;\int_{\Omega_{1}}|\D\,(u_{k}-b_{k})|^{2}\dif x \leq c\int_{\Omega_{1}}|\sg(u_{k})|^{2}\dif x.
\end{align}
For $|h|<\dista(\partial\!\ball(x_{0},R);\partial\Omega_{1})$ and $s\in\{1,...,n\}$, we then choose $\varphi:=\tau_{s,h}^{-}\big(\rho^{2}\tau_{s,h}^{+}(u_{k}-b_{k})\big)\in\sobo^{1,2}(\Omega;\R^{n})(\hookrightarrow (\sobo_{0}^{1,\infty}(\Omega;\R^{n})^{*})$ as a test map in \eqref{eq:elmain}. Since $\sg$ and $\tau_{s,h}^{-}$ commute, this yields with $\widetilde{u}_{k}:=u_{k}-b_{k}$
\begin{align}\label{eq:refinedEL}
\left\vert\int_{\Omega}\langle f'_{k}(\sg(u_{k})),\tau_{s,h}^{-}(\sg(\rho^{2}\tau_{s,h}^{+}\widetilde{u}_{k}))\rangle\dif x \right\vert \leq \frac{1}{k}\|\tau_{s,h}^{-}(\rho^{2}\tau_{s,h}^{+}\widetilde{u}_{k})\|_{(\sobo_{0}^{1,\infty}(\Omega;\R^{n})^{*}}.
\end{align}
We now proceed in three steps. 

\emph{Step 1. Recast of \eqref{eq:refinedEL}.} By discrete integration by parts in \eqref{eq:refinedEL}, we find by the product rule for $\sg$ that
\begin{align}\label{eq:intermed1}
\begin{split}
\mathbf{I} & :=\int_{\Omega}\langle\tau_{s,h}^{+}f'_{k}(\sg(u_{k})),\rho^{2}\tau_{s,h}\sg(u_{k})\rangle \dif x   \\ & \leq  \left\vert \int_{\Omega}\langle \tau_{s,h}^{+}f'_{k}(\sg(u_{k})),2\rho\D\rho\odot\tau_{s,h}\widetilde{u}_{k}\rangle\dif x \right\vert +\frac{1}{k}\|\tau_{s,h}^{-}(\rho^{2}\tau_{s,h}\widetilde{u}_{k})\|_{(\sobo_{0}^{1,\infty})^{*}} \\ & \leq \left\vert \int_{\Omega}\langle \tau_{s,h}^{+}f'(\sg(u_{k})),2\rho\D\rho\odot\tau_{s,h}\widetilde{u}_{k}\rangle\dif x \right\vert + \frac{1}{A_{k}k^{2}}\left\vert\int_{\Omega}\langle\tau_{s,h}\sg(u_{k}),2\rho\D\rho\odot\tau_{s,h}\widetilde{u}_{k}\rangle\dif x\right\vert \\
& + \frac{1}{k}\|\tau_{s,h}^{-}(\rho^{2}\tau_{s,h}\widetilde{u}_{k})\|_{(\sobo_{0}^{1,\infty})^{*}}\\
& =: \mathbf{II}+\mathbf{III}+\mathbf{IV}, 
\end{split}
\end{align}
where $A_{k}$ is defined by \eqref{eq:defAk}. 

\emph{Step 2. Key Estimates.} We now estimate the single terms $\mathbf{I},...,\mathbf{IV}$. As to $\mathbf{I}$, we introduce for each $k\in\mathbb{N}$ the bilinear forms 
\begin{align*}
\mathcal{B}_{k,h}(x)(\xi,\zeta):=\int_{0}^{1}\langle f''_{k}\big(\sg(u_{k})+t\tau_{s,h}\sg(u_{k})\big)\xi,\zeta\rangle\dif t,\qquad \xi,\zeta\in\R_{\sym}^{n\times n}. 
\end{align*}
Consequently, by the fundamental theorem of calculus we deduce 
\begin{align*}
\mathbf{I} & = \int_{\Omega}\langle f'_{k}(\sg(u_{k}(x+he_{s})))-f'_{k}(\sg(u_{k}(x))),\rho^{2}\sg(u_{k})\rangle\dif x\\
& = \int_{\Omega}\left\langle \int_{0}^{1}\frac{\dif}{\dif t}f'_{k}\big(\sg(u_{k})+t\tau_{s,h}\sg(u_{k})\big)\dif t,\rho^{2}\tau_{s,h}\sg(u_{k})\right\rangle\dif x \\
& = \int_{\Omega}\mathcal{B}_{k,h}\big(\rho\tau_{s,h}\sg(u_{k}),\rho\tau_{s,h}\sg(u_{k})\big) \dif x.
\end{align*}
By $\mu$-ellipticity and the definition of $f_{k}$, these are strongly elliptic bilinear forms. We briefly pause to comment on the strategy. Usually, one would now apply the Cauchy-Schwarz inequality to the first term of the right hand side of the first inequality in \eqref{eq:intermed1} and then conveniently absorbs, but this we \emph{do not pursue here}. In fact, this would give rise to the term
\begin{align}\label{eq:critical}
4\int_{\Omega}\mathcal{B}_{k,h}(\rho\D\rho\odot\tau_{s,h}u_{k},\rho\D\rho\odot\tau_{s,h}u_{k})\dif x 
\end{align}
which is unclear to us how to be controlled by the estimates available so far. Instead, we use the fact that by Lipschitz continuity of $f$, $f'$ is bounded and so, in particular $|\tau_{s,h}^{+}f'(\sg(u_{k}))|\leq M$ for some $M>0$. We now go back to the local embedding provided by Proposition~\ref{prop:cheapembedding} and hence obtain for every $0<\beta<1$ the embedding $\ld(\Omega)\hookrightarrow \sobo_{\locc}^{\beta,1}(\Omega;\R^{n})$. By standard means, this yields $\ld(\Omega_{1})\hookrightarrow \sobo^{\beta,1}(\Omega_{1};\R^{n})\hookrightarrow (\besov_{1,\infty}^{\beta})_{\locc}(\Omega_{1};\R^{n})$. In particular, we obtain by Remark~\ref{rem:Ws1emb} in conjunction with Lemma~\ref{lem:rigidchoose}
\begin{align}\label{eq:besovrigid}
\begin{split}
\sup_{\substack{s\in\{1,...,n\}\\ |h|<\dista(\ball(x_{0},R),\partial\Omega_{1})}} & \int_{\ball(x_{0},R)}\frac{|\tau_{s,h}\widetilde{u}_{k}|}{h^{\beta}}\dif x  \leq c(\Omega_{1})\iint_{\Omega_{1}\times \Omega_{1}}\frac{|\widetilde{u}_{k}(x)-\widetilde{u}_{k}(y)|}{|x-y|^{n+\beta}}\dif x\dif y\\
& \leq c(\Omega_{1},\beta)\|\widetilde{u}_{k}\|_{\ld(\Omega_{1})} \\ & =  c(\Omega_{1},\beta)(\|\widetilde{u}_{k}\|_{\lebe^{1}(\Omega_{1};\R^{n})}+\|\sg(u_{k})\|_{\lebe^{1}(\Omega_{1};\R^{n\times n})})\\
& \leq C(\Omega_{1},\beta)\|\sg(u_{k})\|_{\lebe^{1}(\Omega_{1};\R^{n\times n})}\qquad (\text{by}\;\eqref{eq:rigidchoose}).
\end{split}
\end{align}
In consequence, for every $0<\beta<1$ (to be fixed later) we find $C(\beta)>0$ such that 
\begin{align*}
\mathbf{II} & \leq M \int_{\ball(x_{0},R)}|\tau_{s,h}\widetilde{u}_{k}|\dif x = M h^{\beta}\int_{\ball(x_{0},R)}\frac{|\tau_{s,h}\widetilde{u}_{k}|}{h^{\beta}}\dif x \\ 
& \leq C(\beta)Mh^{\beta}\|\sg(u_{k})\|_{\lebe^{1}(\Omega;\R^{n\times n})} 
\end{align*}
so that by \eqref{eq:LDbound} and possibly enlarging $C(\beta)$, we end up with
\begin{align}\label{eq:estimateII}
\mathbf{II} \leq C(\beta)Mh^{\beta}\|\sg(\widetilde{u}_{k})\|_{\lebe^{1}(\Omega;\R^{n\times n})}.
\end{align}
As to $\mathbf{III}$, we apply the Cauchy-Schwarz inequality to find for $\delta>0$ sufficiently small
\begin{align*}
\mathbf{III} & \leq \frac{\delta}{A_{k}k^{2}}\int_{\Omega}|\rho\tau_{s,h}\sg(u_{k})|^{2}\dif x +\frac{C(\delta)}{A_{k}k^{2}}\int_{\Omega}|\nabla\rho\odot\tau_{s,h}\widetilde{u}_{k}|^{2}\dif x \\
& = \frac{\delta}{A_{k}k^{2}}\int_{\Omega}|\rho\tau_{s,h}\sg(u_{k})|^{2}\dif x +\frac{C(\delta,\rho)h^{2}}{A_{k}k^{2}}\int_{\ball(x_{0},R)}|\Delta_{s,h}\widetilde{u}_{k}|^{2}\dif x\\
& \leq \frac{\delta}{A_{k}k^{2}}\int_{\Omega}|\rho\tau_{s,h}\sg(u_{k})|^{2}\dif x +\frac{C(\delta,\rho)h^{2}}{A_{k}k^{2}}\int_{\Omega_{1}}|\partial_{s}\widetilde{u}_{k}|^{2}\dif x. 
\end{align*}
The ultimate term now is controlled by Korn's inequality. To be precise, we have by \eqref{eq:rigidchoose}
\begin{align*}
\int_{\ball(x_{0},R)}|\partial_{s}\widetilde{u}_{k}|^{2}\dif x & \leq \int_{\Omega_{1}}|\nabla \widetilde{u}_{k}|^{2}\dif x \leq c(\Omega_{1}) \int_{\Omega_{1}}|\sg(u_{k})|^{2}\dif x  \leq c(\Omega_{1}) \int_{\Omega}|\sg(u_{k})|^{2}\dif x 
\end{align*}
and so, by the definition of $A_{k}$,
\begin{align}\label{eq:estimateIII}
\mathbf{III} \leq \frac{\delta}{A_{k}k^{2}}\int_{\Omega}|\rho\tau_{s,h}\sg(u_{k})|^{2}\dif x + \frac{C(\delta,\rho,\Omega)h^{2}}{k^{2}} =:\mathbf{III}_{1}^{(\delta)}+\mathbf{III}_{2}^{(\delta)}.
\end{align}
Ad $\mathbf{IV}$. Here we estimate for $\gamma>0$ to be specified later 
\begin{align}\label{eq:estimateIV}
\begin{split}
\mathbf{IV} & = \frac{1}{k}\|\tau_{s,h}^{-}(\rho^{2}\tau_{s,h}\widetilde{u}_{k})\|_{(\sobo_{0}^{1,\infty})^{*}} = \frac{h^{1+\gamma}}{k}\|\Delta_{s,h}^{-}(\rho^{2}\frac{\tau_{s,h}}{h^{\gamma}}\widetilde{u}_{k})\|_{(\sobo_{0}^{1,\infty})^{*}} \\
& \leq \frac{h^{1+\gamma}}{k}\|\rho^{2}\frac{\tau_{s,h}}{h^{\gamma}}\widetilde{u}_{k}\|_{\lebe^{1}}\qquad(\text{by Lemma~\ref{lem:negsob}})\\
& \leq c\frac{h^{1+\gamma}}{k}\|\widetilde{u}_{k}\|_{\sobo^{\gamma,1}(\Omega_{1};\R^{n})} \leq c\frac{h^{1+\gamma}}{k}\|\widetilde{u}_{k}\|_{\ld(\Omega_{1})} \stackrel{\eqref{eq:LDbound}}{\leq} c\frac{h^{1+\gamma}}{k}, 
\end{split}
\end{align}
where we have employed a similar argument as in \eqref{eq:besovrigid}. 

In an intermediate step, let $0\leq t\leq 1$ and $a,b\in\R^{n\times n}$ be arbitrary. There holds (with some fixed $C>0$ independent of $t,a$ and $b$)
\begin{align}\label{eq:basicinequality}
(1+|a+tb|^{2})^{\frac{1}{2}}\leq C(1+|a|^{2}+|b|^{2})^{\frac{1}{2}}.
\end{align}
Now, we estimate from below by virtue of $\mu$-ellipticity of $f$ and the definition of $f_{k}$ 
\begin{align*}
\mathbf{I} & \geq \int_{\Omega}\int_{0}^{1}\langle f''\big(\sg(u_{k})+t\tau_{s,h}\sg(u_{k})\big)\rho\tau_{s,h}\sg(u_{k}),\rho\tau_{s,h}\sg(u_{k})\rangle\dif t\dif x + \frac{1}{A_{k}k^{2}}\int_{\Omega}|\rho\tau_{s,h}\sg(u_{k})|^{2}\dif x\\
& \geq \lambda \int_{\Omega}\int_{0}^{1}\frac{|\rho\tau_{s,h}\sg(u_{k})|^{2}}{(1+|\sg(u_{k})+t\tau_{s,h}\sg(u_{k})|^{2})^{\frac{\mu}{2}}}\dif t\dif x+ \frac{1}{A_{k}k^{2}}\int_{\Omega}|\rho\tau_{s,h}\sg(u_{k})|^{2}\dif x\\
& \stackrel{\eqref{eq:basicinequality}}{\geq} \widetilde{\lambda}\int_{\Omega}\frac{|\rho\tau_{s,h}\sg(u_{k})|^{2}}{(1+|\sg(u_{k})(x)|^{2}+|\sg(u_{k})(x+he_{s})|^{2})^{\frac{\mu}{2}}}\dif x + \frac{1}{A_{k}k^{2}}\int_{\Omega}|\rho\tau_{s,h}\sg(u_{k})|^{2}\dif x =: \mathbf{I}'
\end{align*}
after dimishing $\lambda>0$ to $\widetilde{\lambda}>0$ if necessary. Now let $1<\alpha<2$ to be fixed later. Then Lemma~\ref{lem:Valpha1} to $\sg(u_{k})$ yields 
\begin{align}\label{eq:estimateI'}
\begin{split}
\mathbf{I} \geq \mathbf{I}' & \geq c(\widetilde{\lambda})\int_{\Omega}\rho\frac{|\tau_{s,h}V_{\alpha}(\sg(u_{k}(x)))|^{2}}{(1+|\sg(u_{k})(x+he_{s})|^{2}+|\sg(u_{k})(x)|^{2})^{\frac{2(1-\alpha)+\mu}{2}}}\dif x \\ & + \frac{1}{A_{k}k^{2}}\int_{\Omega}|\rho\tau_{s,h}\sg(u_{k})|^{2}\dif x =: \mathbf{I}'_{1}+\mathbf{I}'_{2}.
\end{split}
\end{align}
We now gather the estimates given so far and put 
\begin{align}\label{eq:omegadef}
\omega_{k,h,s}(x) := \frac{1}{(1+|\sg(u_{k}(x))|^{2}+|\sg(u_{k}(x+he_{s}))|^{2})^{\frac{\mu+2(1-\alpha)}{2}}}\qquad\text{for}\,\mathscr{L}^{n}\text{-a.e.}\,x\in\ball(x_{0},R).
\end{align}
for brievity. We choose $\delta>0$ in \eqref{eq:estimateIII} such that $\delta<1$. In consequence, we may absorb $\mathbf{III}_{1}^{(\delta)}$ into $\mathbf{I}'_{2}$ in the overall inequality. Hence, by \eqref{eq:intermed1}, \eqref{eq:estimateI'}, \eqref{eq:estimateII}, \eqref{eq:estimateIII} and \eqref{eq:estimateIV} we invoke \eqref{eq:LDbound} to end up with 
\begin{align}
\begin{split}
c(\widetilde{\lambda})  \int_{\Omega}|\rho\tau_{s,h}V_{\alpha}(\sg(u_{k}(x)))|^{2}\omega_{k,h,s}(x)\dif x & + \frac{1-\delta}{A_{k}k^{2}}\int_{\Omega}|\rho\tau_{s,h}\sg(u_{k})|^{2}\dif x \\
& \leq C(\beta)Mh^{\beta}\\ &  + \frac{C(\delta,\rho,\Omega)h^{2}}{k^{2}} + C\frac{h^{1+\gamma}}{k}.
\end{split}
\end{align}
Since we may assume without loss of generality that $|h|<1$ and by positivity of the second term on the left hand side of the previous inequality, we find by dividing the previous inequality by $h^{\beta}$ 
\begin{align}\label{eq:keyest1}
\sup_{k\in\mathbb{N}}\int_{\Omega}\left\vert\frac{\tau_{s,h}V_{\alpha}(\sg(u_{k}(x)))}{h^{\frac{\beta}{2}}} \right\vert^{2}\omega_{k,h,s}(x)\dif x < \infty.
\end{align}

\emph{Step 3. Conclusion.} We go back to \eqref{eq:keyest1} and deduce by Young's inequality that 
\begin{align*}
\int_{\Omega}\rho\left\vert\frac{\tau_{s,h}V_{\alpha}(\sg(u_{k}(x)))}{h^{\frac{\beta}{2}}} \right\vert\dif x & =
\int_{\Omega}\rho\left\vert\frac{\tau_{s,h}V_{\alpha}(\sg(u_{k}(x)))}{h^{\frac{\beta}{2}}} \right\vert\omega_{k,h,s}^{\frac{1}{2}}\frac{\dif x}{\omega_{k,h,s}^{\frac{1}{2}}}\\
& \leq \frac{1}{2}\int_{\Omega}\rho^{2}\left\vert\frac{\tau_{s,h}V_{\alpha}(\sg(u_{k}(x)))}{h^{\frac{\beta}{2}}} \right\vert^{2}\omega_{k,h,s}\dif x + \frac{1}{2}\int_{\ball(x_{0},R)}\frac{\dif x}{\omega_{k,h,s}(x)}\\
& =: \mathbf{V}+\mathbf{VI}.
\end{align*}
The term $\mathbf{V}$ is bounded by \eqref{eq:keyest1}. As to $\mathbf{VI}$, we recall that by \eqref{eq:LDbound}, $(u_{k})$ is uniformly bounded in $\ld(\Omega)$. In consequence, $\mathbf{IV}$ is uniformly bounded in $k$ and $h$ provided 
\begin{align}
\mu+2(1-\alpha)\leq 1,\;\;\;\text{that is,}\;\;\;\frac{\mu+1}{2}\leq\alpha.
\end{align}
At this stage, let us recall that this appears subject to the condition $1<\alpha<2$ from Lemma~\ref{lem:Valpha1}. Since $\mu>1$, the lower bound is satisfied in any case, but the upper bound requires $\mu<3$ which is satisfied as well for the growth regime we are considering. Now, summarising, we obtain 
\begin{align}
\sup_{k\in\mathbb{N}}\int_{\Omega}\rho\left\vert\frac{\tau_{s,h}V_{\alpha}(\sg(u_{k}(x)))}{h^{\frac{\beta}{2}}} \right\vert\dif x<\infty
\end{align}
and hence, by arbitrariness of $x_{0}$ and $\rho$, infer that $(V_{\alpha}(\sg(u_{k})))$ is locally uniformly bounded in $\besov_{1,\infty}^{\beta/2}$. By Lemma~\ref{lem:besovembedding}, we obtain that for any $0<\delta<\frac{n}{n-\beta/2}$, $(V_{\alpha}(\sg(u_{k})))$ is locally uniformly bounded in $\lebe^{\frac{2n}{2n-\beta}-\delta}$. Now we invoke Lemma~\ref{lem:valpha}, cf.~\eqref{eq:Valphaintbound}, to deduce that for any relatively compact Lipschitz set $K\Subset\Omega$ there exists 
\begin{align}\label{eq:mainbound2}
\sup_{k\in\mathbb{N}}\int_{K}|\sg(u_{k})|^{(2-\alpha)\big(\frac{2n}{2n-\beta}-\delta\big)}\dif x = C(\alpha,\delta,\beta) < \infty, 
\end{align}
and we now choose $\beta,\alpha$ and $\delta$ in a suitable way. First we note that 
\begin{align}\label{eq:mucond1}
\mu < 1+\frac{1}{n} \Longrightarrow \mu+1 < 2+\frac{1}{n}\Longrightarrow \frac{\mu+1}{2}< 1+ \frac{1}{2n}
\end{align}
and so we find and fix $\alpha$ such that 
\begin{align}\label{eq:alphacond1}
\frac{\mu+1}{2}< \alpha < 1+ \frac{1}{2n}.
\end{align}
From the previous inequality, we deduce 
\begin{align*}
\alpha < 1+\frac{1}{2n} \Longrightarrow 1-\alpha >-\frac{1}{2n} \Longrightarrow 2-\alpha > 1-\frac{1}{2n}=\frac{2n-1}{2n}\Longrightarrow (2-\alpha)\frac{2n}{2n-1}>1.
\end{align*}
Now we may send $\beta\nearrow 1$ and $\delta\searrow 0$ to deduce that there exists $\beta<1$ and $\delta>0$ such that 
\begin{align}\label{eq:pchoice1}
p:=(2-\alpha)\Big(\frac{2n}{2n-\beta}-\delta\Big)>1
\end{align}
as well. Then we infer from \eqref{eq:mainbound2} that for every ball $\ball\Subset\Omega$ there holds
\begin{align*}
\sup_{k\in\mathbb{N}}\int_{\ball}|\sg(u_{k})|^{p}\dif x <\infty. 
\end{align*}
Then, by Poincar\'{e}'s inequality, we find rigid deformations $d_{k}\in\mathcal{R}(\ball)$ such that the sequence $(u_{k}-d_{k})|_{\ball}$ is uniformly bounded in $\lebe^{p}(\ball;\R^{n})$. Since $1<p<\infty$, Korn's inequality and reflexivity of $\sobo^{1,p}$ allow to extract a subsequence $(u_{k(l)}|_{K})$ which converges weakly to some $v\in\sobo^{1,p}(\ball;\R^{n})$. Since $u_{k}|_{\ball}\stackrel{*}{\rightharpoonup}u|_{\ball}$, we conclude that $v=u|_{\ball}$ and $\E^{s}u$ must vanish on $\ball$. The proof is complete. 
\end{proof}
Let us now comment on some aspects of the proof. 

\begin{itemize}
\item[\emph{1.}] Even though briefly mentioned in the proof, let us stress again that it is precisely the term \eqref{eq:critical} where the proof mostly differs from the $\bv$-case. If we worked with $\mu$-elliptic functionals \eqref{eq:varprin} where $\sg$ is replaced by $\D$ and we thus are in the $\bv$-framework, the suitable adaptation of the approximation procedure outlined in Section~\ref{sec:ekeland} (cf.~\cite[Sec.~5]{BS1}) yields that the constructed sequence $(u_{k})$ is uniformly bounded in $\bv(\Omega;\R^{n})$. Then, by the upper bound provided by the $\mu$-ellipticity (cf.~\eqref{eq:muell}) the term from \eqref{eq:critical} would be controlled by 
\begin{align*}
\int_{\Omega}\mathcal{B}_{k,h}(\rho\D\rho\odot\tau_{s,h}u_{k},\rho\D\rho\odot\tau_{s,h}u_{k})\dif x \leq h^{2}\int_{\Omega}\frac{|\rho\Delta_{s,h}u_{k}|^{2}}{(1+|\sg(u_{k})|^{2})^{\frac{1}{2}}}\dif x, 
\end{align*}
and as in \cite[Lem.~5.3]{BS1}, the last term can be controlled as $(u_{k})$ then would be bounded in $\sobo^{1,1}(\Omega;\R^{n})$. Hence it is at this stage, where the $\bd$-case differs from the $\bv$-case.
\item[\emph{2.}] In bounding the term $\mathbf{IV}$ of the previous proof (cf.~\eqref{eq:estimateIV}), it is not necessary to work with $\gamma>0$; in fact, for the purposes of this proof, $\gamma=0$ would do. In this case, we could estimate $\|\Delta_{s,h}(\rho^{2}\tau_{s,h}u_{k})\|_{(\sobo_{0}^{1,\infty})^{*}}\leq \|\rho^{2}\tau_{s,h}u_{k}\|_{\lebe^{1}}\leq  C \|u_{k}\|_{\lebe^{1}} \leq C$ by \eqref{eq:LDbound}. However, for the iteration to be sketched below, it is necessary to work with $\gamma>0$ so that we included the argument already in the above proof. 
\end{itemize}
As we mentioned above, Theorem~\ref{thm:main0} implies that $\gm(\mathfrak{F};u_{0})\subset\sobo_{\locc}^{1,p}(\Omega;\R^{n})\cap\ld(\Omega)$ and so we may now use the additional integrability information to amplify the regularity of generalised minimisers. Here, as $V_{\alpha}$ has a regularising effect on $\sg(u_{k})$, we directly work on $\sg(u_{k})$. Let $1<\mu<1+\frac{1}{n}$. Going back to \eqref{eq:pchoice1} subject to \eqref{eq:alphacond1}, optimising $p$ yields that $\gm(\mathfrak{F};u_{0})\subset \ld(\Omega)\cap\sobo_{\locc}^{1,q}(\Omega;\R^{n})$ for all 
\begin{align}\label{eq:qdef}
1\leq q <(2-\mu)\frac{2n}{2n-1}.
\end{align}
Exemplarily, we show how for a certain range of ellipticities we can even obtain (almost) second derivative estimates.
\begin{corollary}\label{cor:nikolskii}
Suppose that $f\in\hold^{2}(\R_{\sym}^{n\times n})$ is a $\mu$-elliptic integrand of linear growth with $1<\mu<\frac{4n}{4n-1}$. Then for all generalised minimisers $u\in\gm(\mathfrak{F};u_{0})$ we have $u\in\sobo^{2,Q}_{\locc}(\Omega;\R_{\sym}^{n})$ for some $Q=Q(\mu)>1$. 
\end{corollary}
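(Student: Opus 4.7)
The plan is to iterate the argument of Theorem~\ref{thm:main0} once, exploiting the improved integrability already obtained. First, Theorem~\ref{thm:main0} (applied within the range $\mu<\tfrac{n+1}{n}$, which is compatible with $\mu<\tfrac{4n}{4n-1}$) yields, for every $u\in\gm(\mathfrak{F};u_{0})$, the inclusion $u\in\ld(\Omega)\cap\sobo_{\locc}^{1,q_{1}}(\Omega;\R^{n})$ for any $q_{1}<(2-\mu)\tfrac{2n}{2n-1}$. Fix such a $q_{1}>1$ so close to the upper bound that $q_{1}>\mu$; this is possible precisely because $\mu<\tfrac{4n}{4n-1}$, which by a direct computation is equivalent to $(2-\mu)\tfrac{2n}{2n-1}>\mu$.

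Next I would return to the approximating sequence $(u_{k})$ from Section~\ref{sec:ekeland}. By the same arguments that give Theorem~\ref{thm:main0}, the sequence $(u_{k})$ is uniformly bounded in $\sobo_{\locc}^{1,q_{1}}(\Omega;\R^{n})$; in particular $\|\nabla u_{k}\|_{\lebe_{\locc}^{1}}$ is uniformly bounded via Korn's inequality (valid for $q_{1}>1$). The key improvement over the proof of Theorem~\ref{thm:main0} occurs in the estimate for the term $\mathbf{II}$: rather than using the Besov embedding $\ld\hookrightarrow \besov_{1,\infty}^{\beta}$ to obtain $\mathbf{II}\lesssim h^{\beta}$ with $\beta<1$, we now simply use $|\tau_{s,h}\widetilde{u}_{k}|\leq h\int|\nabla\widetilde{u}_{k}|$ together with the uniform $W^{1,1}$-bound to obtain $\mathbf{II}\leq C h$. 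The other terms $\mathbf{III}$ and $\mathbf{IV}$ are $O(h^{2}/k^{2})$ and $O(h^{1+\gamma}/k)$, respectively, as before.

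Combining these bounds, dropping the stabilisation term, and recalling the $V_{\alpha}$-identity together with the definition of $\omega_{k,h,s}$, the coercive lower bound on $\mathbf{I}$ (via $\mu$-ellipticity) rewrites as
\begin{align*}
\int_{\Omega}\rho^{2}\frac{|\tau_{s,h}\sg(u_{k})|^{2}}{(1+|\sg(u_{k})|^{2}+|\sg(u_{k}(\cdot+he_{s}))|^{2})^{\mu/2}}\dif x\leq C h,
\end{align*}
uniformly in $k$. Dividing through by $h$ and applying Hölder's inequality with exponents $2/r$ and $2/(2-r)$ yields
\begin{align*}
\int_{\ball}\left|\frac{\rho\tau_{s,h}\sg(u_{k})}{h^{1/2}}\right|^{r}\dif x\leq C\left(\int_{\ball}(1+|\sg(u_{k})|^{2})^{\mu r/(2(2-r))}\dif x\right)^{(2-r)/2}.
\end{align*}
The right-hand side is finite uniformly in $k$ provided $\mu r/(2-r)\leq q_{1}$, i.e.\ $r\leq 2q_{1}/(\mu+q_{1})$, and the condition $q_{1}>\mu$ is exactly what is needed for such an $r$ to be strictly greater than $1$. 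Hence $(\sg(u_{k}))$ is locally uniformly bounded in the Nikolski\u{\i} space $\mathcal{N}^{1/2,r}=\besov_{r,\infty}^{1/2}$ for some $r>1$.

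Finally, combining the Besov embedding from Lemma~\ref{lem:besovembedding} (applied with exponent $r$ rather than $1$) to gain further integrability, and exploiting the identity $\partial_{i}\partial_{j}u^{k}=\partial_{i}\sg(u)_{jk}+\partial_{j}\sg(u)_{ik}-\partial_{k}\sg(u)_{ij}$ together with weak lower semicontinuity when passing $k\to\infty$ along the subsequence produced by the Ekeland construction, one obtains $u\in\sobo_{\locc}^{2,Q}(\Omega;\R^{n})$ for the resulting $Q=Q(\mu,n)>1$. The main obstacle is the bookkeeping in the Hölder step: one must ensure that the exponent $r=2q_{1}/(\mu+q_{1})$ extracted after removing the weight $(1+|\sg(u_{k})|^{2})^{-\mu/2}$ is genuinely $>1$, and it is precisely here that the quantitative threshold $\mu<\tfrac{4n}{4n-1}$ enters in a sharp fashion, linking the integrability threshold of Theorem~\ref{thm:main0} with the coercivity exponent $\mu/2$ that appears in the denominator.
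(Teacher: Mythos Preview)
Your argument has a genuine gap in the final step. What you establish is that $(\sg(u_{k}))$ is locally uniformly bounded in $\besov_{r,\infty}^{1/2}$ for some $r>1$; passing to the limit this yields $\sg(u)\in(\besov_{r,\infty}^{1/2})_{\locc}$, i.e.\ only \emph{half} a derivative on $\sg(u)$. The identity $\partial_{i}\partial_{j}u^{k}=\partial_{i}\sg(u)_{jk}+\partial_{j}\sg(u)_{ik}-\partial_{k}\sg(u)_{ij}$ then places $\partial^{2}u$ in a negative-order space $\besov_{r,\infty}^{-1/2}$, not in any $\lebe^{Q}$, so $u\in\sobo_{\locc}^{2,Q}$ does \emph{not} follow. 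Invoking Lemma~\ref{lem:besovembedding} at this stage only trades differentiability for integrability of $\sg(u)$; feeding this back into your scheme would improve $r$ but the exponent $1/2$ in the Nikolski\u{\i} scale is fixed by the structure of your estimate (the right-hand side is $O(h)$, and you take a square root), so no amount of iteration recovers a full derivative.

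The paper proceeds differently: it tests the approximate Euler--Lagrange equation with the second-order difference quotient $\varphi=-\Delta_{s,h}^{-}(\rho^{2}\Delta_{s,h}\widetilde{u}_{k})$ rather than reusing the first-order test function from Theorem~\ref{thm:main0}. The crucial new mechanism is that the resulting right-hand side contains a term $C\int|\rho\Delta_{s,h}\sg(u_{k})|^{\widetilde{q}}\dif x$ (arising after Korn's inequality with some $1<\widetilde{q}<2$) which, via Young's inequality with the weight $(1+|\sg(u_{k})|^{2})^{-\mu/2}$, is split into a piece absorbed into the coercive left-hand side and a remainder $\int(1+|\sg(u_{k})|^{2})^{\mu\widetilde{q}/(2(2-\widetilde{q}))}\dif x$. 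The condition $\mu<\tfrac{4n}{4n-1}$ enters precisely to ensure $\mu\widetilde{q}/(2-\widetilde{q})$ stays below the integrability threshold $(2-\mu)\tfrac{2n}{2n-1}$ already available. This absorption yields a bound on $\int\rho^{2}|\Delta_{s,h}\sg(u_{k})|^{2}(1+|\sg(u_{k})|^{2})^{-\mu/2}\dif x$ \emph{independent of $h$}, from which $\partial_{s}\sg(u)\in\lebe_{\locc}^{Q}$ and hence $u\in\sobo_{\locc}^{2,Q}$ follow. Your H\"{o}lder step and the identification of the threshold are correct in spirit, but they need to be applied after this absorption argument, not to the half-order estimate.
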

\begin{proof}
Let us firstly note that the condition on $\mu$ implies 
\begin{align}\label{eq:nikolimplication}
\mu < \frac{4n}{4n-1}\Longrightarrow \mu\left(1+\frac{2n}{2n-1}\right)<\frac{4n}{2n-1}\Longrightarrow \mu < (2-\mu)\frac{2n}{2n-1}, 
\end{align}
and so we deduce that $u\in\ld(\Omega)\cap\sobo_{\locc}^{1,\mu}(\Omega;\R^{n})$ by the above argument. Denote $(u_{k})$ the Ekeland approximation sequence as above. We now go back to the proof of Theorem~\ref{thm:main0}, step 1, let $x_{0}\in\Omega$ be arbitrary and choose $0<r<R<\dista(\partial\Omega;\ball(x_{0},R)$ together with a localisation function $\rho\in\hold_{c}^{2}(\ball(x_{0},R);[0,1])$. We then put, for $s\in\{1,...,n\}$ and $|h|$ sufficiently small, $\varphi:=-\Delta_{s,h}^{-}(\rho^{2}\Delta_{s,h}\widetilde{u}_{k})$. Then we insert $\varphi$ into \eqref{eq:refinedEL}, write 
\begin{align*}
\sg(\varphi)=-\Delta_{s,h}^{-}(\rho^{2}\Delta_{s,h}\sg(u_{k}))-\Delta_{s,h}^{-}(2\rho\D\rho\odot\Delta_{s,h}\widetilde{u}_{k}) 
\end{align*}
and thereby end up with 
\begin{align}\label{eq:jayjay}
\begin{split}
\mathbf{J}_{1}:=\left\vert \int_{\Omega}\langle \Delta_{s,h}f'_{k}(\sg(u_{k})),\rho^{2}\Delta_{s,h}\sg(u_{k})\rangle\dif x \right\vert & \leq \left\vert\int_{\Omega}\langle f'_{k}(\sg(u_{k})),\Delta_{s,h}^{-}(2\rho\D\rho\odot\Delta_{s,h}\widetilde{u}_{k})\rangle\dif x \right\vert \\ &  + \frac{1}{k}\|\rho^{2}\Delta_{s,h}\widetilde{u}_{k}\|_{\lebe^{1}(\Omega;\R^{n})} =: \mathbf{J}_{2}+\mathbf{J}_{3}.
\end{split}
\end{align}
Similarly as in the proof of Theorem~\ref{thm:main0}, we find that for some $c>0$
\begin{align}\label{eq:J1estimate}
c\int_{\Omega}\frac{|\rho\Delta_{s,h}\sg(u_{k})|^{2}}{(1+|\sg(u_{k})(x+he_{s})|^{2}+|\sg(u_{k})|^{2})^{\frac{\mu}{2}}}\dif x + \frac{1}{k^{2}A_{k}}\int_{\Omega}|\rho\Delta_{s,h}\sg(u_{k})|^{2}\dif x \leq \mathbf{J}_{1}. 
\end{align}
Now, as to $\mathbf{J}_{2}$, we split and estimate by Lipschitz continuity of $f$ in the first inequality and $g_{k}:=f_{k}-f$, 
\begin{align*}
\mathbf{J}_{2} & \leq C\int_{\Omega}|\Delta_{s,h}^{-}(\rho\D\rho\odot\Delta_{s,h}\widetilde{u}_{k})|\dif x +\left\vert \int_{\Omega}\langle \Delta_{s,h}g'_{k}(\sg(u_{k})),2\rho\D\rho\odot\Delta_{s,h}\widetilde{u}_{k}\rangle\dif x\right\vert =:\mathbf{J}_{2}^{(1)}+\mathbf{J}_{2}^{(2)}.
\end{align*}
Ad $\mathbf{J}_{2}^{(1)}$. Let $1<\widetilde{q}<2$ to be fixed later. We find by estimating difference quotients against differentials and Young's inequality
\begin{align}
\begin{split}
\mathbf{J}_{2}^{(1)} & \leq C\int_{\Omega}|\partial_{s}(\D\rho\odot\rho\Delta_{s,h}\widetilde{u}_{k})|\dif x \\ & \leq C(\rho) + C(\rho)\int_{\Omega}|\partial_{s}(\rho\Delta_{s,h}\widetilde{u}_{k})|\dif x \\
& \leq C(\rho) + C(\rho)\mathscr{L}^{n}(\Omega)^{\widetilde{q}'}+C(\rho)\int_{\Omega}|\partial_{s}(\rho\Delta_{s,h}\widetilde{u}_{k})|^{\widetilde{q}}\dif x \\
& \leq C(\rho) + C(\rho)\mathscr{L}^{n}(\Omega)^{\widetilde{q}'}+C(\rho,\widetilde{q})\int_{\Omega}|\sg(\rho\Delta_{s,h}\widetilde{u}_{k})|^{\widetilde{q}}\dif x\;\;\;\;\;\;\text{(by Korn)}\\
& \leq C(\rho,\Omega,\widetilde{n}) + C(\rho,\widetilde{q})\int_{\ball(x_{0},R)}|\Delta_{s,h}\widetilde{u}_{k}|^{\widetilde{q}}\dif x+C(\rho,\widetilde{q})\int_{\Omega}|\rho\Delta_{s,h}\sg(u_{k})|^{\widetilde{q}}\dif x \\
& =: \mathbf{K}_{1}+\mathbf{K}_{2}+\mathbf{K}_{3}. 
\end{split}
\end{align}
If we choose $\widetilde{q}$ sufficiently close to $1$, then we are in position to utilise the fact that $u_{k}\in \sobo_{\locc}^{1,q}$ uniformly in $k$ with $q$ provided by \eqref{eq:qdef} and hence can assume without loss of generality that $\mathbf{K}_{1}$ and $\mathbf{K}_{2}$ are uniformly bounded with respect to $k$. At this stage we fix $\widetilde{q}$ as follows.  Because of $
\frac{\widetilde{q}}{2-\widetilde{q}}\searrow 1$ as $\widetilde{q}\searrow 1$ and \eqref{eq:nikolimplication}, we find $\widetilde{q}>1$ such that for some $q$
\begin{align}\label{eq:agoodchoice}
\mu \leq \mu\frac{\widetilde{q}}{2-\widetilde{q}}<q<(2-\mu)\frac{2n}{2n-1}.
\end{align}
Then, by \eqref{eq:qdef}, we have local uniform boundedness of $(\sg(v_{k}))$ in $\lebe^{q}$ and thus in $\lebe^{\mu(2-\widetilde{q})/\widetilde{q}}$. Since $2/(2-\widetilde{q})$ is the H\"{o}lder conjugate of $\frac{2}{\widetilde{q}}$, we then find by Young's inequality for $\delta>0$
\begin{align}\label{eq:K3estimate}
\begin{split}
\mathbf{K}_{3} & \leq \delta C(\rho,\widetilde{q})\int_{\Omega}\frac{|\rho\Delta_{s,h}\sg(u_{k})|^{2}}{(1+|\sg(u_{k}(x+he_{s})|^{2}+|\sg(u_{k})|^{2})^{\frac{\mu}{2}}}\dif x \\
& + C(\delta,\rho,\widetilde{q})\int_{\ball(x_{0},R)}(1+|\sg(u_{k}(x+he_{s})|^{2}+|\sg(u_{k})|^{2})^{\frac{\mu}{2}\frac{\widetilde{q}}{2}\frac{2}{2-\widetilde{q}}}\dif x. 
\end{split}
\end{align}
Now observe that by \eqref{eq:agoodchoice} and the remark afterwards, the ultimate term can be bounded independently of $k$. Then we choose $0<\delta<c$, where $c>0$ now is the constant on the left hand side of \eqref{eq:J1estimate} and absorb it into the left hand side of \eqref{eq:J1estimate}. 

Ad $\mathbf{J}_{2}^{(2)}$. By definition of $g_{k}$, we consequently obtain by Young's inequality
\begin{align*}
\mathbf{J}_{2}^{(2)} & \leq \frac{1}{k^{2}A_{k}}\int_{\Omega}|\langle\Delta_{s,h}\sg(u_{k}),2\rho\Delta\rho\odot\Delta_{s,h}\widetilde{u}_{k}\rangle|\dif x  \\ & \leq \frac{1}{2k^{2}A_{k}}\int_{\Omega}|\rho\Delta_{s,h}\sg(u_{k})|^{2}\dif x + \frac{2C(\rho)}{k^{2} A_{k}}\int_{\ball(x_{0},R)}|\Delta_{s,h}\widetilde{u}_{k}|^{2}\dif x. 
\end{align*}
We then absorb the first term on the very right hand side into the left hand side of \eqref{eq:J1estimate}. Moreover, by Korn's inequality, we see similarly as in the proof of Theorem~\ref{thm:main0} that the second term on the right hand side of the previous inequality is bounded uniformly in $k$. Finally, for $\mathbf{J}_{3}$ we recall the fact that because $(u_{k})$ is locally uniformly bounded in $\sobo^{1,q}$ with $q>1$, it is locally uniformly bounded in $\sobo^{1,1}$. This yields uniform boundedness of $\mathbf{J}_{3}$. Summarising the estimates obtained so far, we come up with 
\begin{align}\label{eq:secondderivativesuniform}
\sup_{k\in\mathbb{N}}\int_{\Omega}\frac{|\rho\Delta_{s,h}\sg(u_{k})|^{2}}{(1+|\sg(u_{k})(x+he_{s})|^{2}+|\sg(u_{k})|^{2})^{\frac{\mu}{2}}}\dif x <\infty.
\end{align}
Eventually, repeating the argument that lead to \eqref{eq:K3estimate}, we easily find that for some $Q>1$ there holds $\sup_{k\in\mathbb{N}}\|\Delta_{s,h}\sg(u_{k})\|_{\lebe^{Q}(\ball(x_{0},r);\R^{n\times n})}<\infty$. From here and the arbitrariness of $x_{0}$ and $r$ we deduce by standard means that $\partial_{s}\sg(u)\in\lebe_{\locc}^{Q}(\Omega;\R^{n\times n})$. Then Korn's inequality yields that $\partial_{s}u\in\sobo_{\locc}^{1,Q}(\Omega;\R^{n})$ and so, by arbitrariness of $s\in\{1,...,n\}$, $u\in\sobo_{\locc}^{2,Q}(\Omega;\R^{n})$. The proof is complete. 
\end{proof}
A standard application of the measure density lemma \cite[Prop.~2.7]{Giusti} then yields the following bound on the set of non-Lebesgue points of $\sg(u)$ as will be needed in a forthcoming study \cite{GmPR}:
\begin{corollary}\label{cor:HausdorffCor1}
Let $1<\mu<\frac{4n}{4n-1}$ and let $f\in\hold^{2}(\R_{\sym}^{n\times n})$ be a $\mu$-elliptic integrand of linear growth. Then for any $u\in\gm(\mathfrak{F};u_{0})$ there holds $\dim_{\mathcal{H}}(\Sigma_{u})<n-1$, where 
\begin{align}\label{eq:singset}
\Sigma_{u}:=\left\{x_{0}\in\Omega\colon\;\limsup_{r\searrow 0}\dashint_{\ball(x_{0},r)}|\sg(u)-z|\dif\mathscr{L}^{n}>0\;\text{for all}\;z\in\R_{\sym}^{n\times n}\right\}.
\end{align}
\end{corollary}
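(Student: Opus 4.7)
The plan is to combine the Sobolev regularity for $\sg(u)$ provided by Corollary~\ref{cor:nikolskii} with a classical measure--density argument in the spirit of \cite[Prop.~2.7]{Giusti}. By Corollary~\ref{cor:nikolskii}, for any $u\in\gm(\mathfrak{F};u_{0})$ we have $\sg(u)\in\sobo_{\locc}^{1,Q}(\Omega;\R_{\sym}^{n\times n})$ for some $Q=Q(\mu)>1$, and the main point is to convert this into a Hausdorff--dimension bound for the non--Lebesgue points of $\sg(u)$.

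First I would reduce to bounding the set of non--Lebesgue points of $\sg(u)$: if $x_{0}$ is a Lebesgue point of $\sg(u)$ with value $z_{0}\in\R_{\sym}^{n\times n}$, then $\dashint_{\ball(x_{0},r)}|\sg(u)-z_{0}|\dif\mathscr{L}^{n}\to 0$ as $r\searrow 0$, so $x_{0}\notin\Sigma_{u}$. Hence $\Sigma_{u}$ is contained in the complement of the Lebesgue set of $\sg(u)$, and it suffices to estimate the Hausdorff dimension of the latter.

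Next, fix a relatively compact Lipschitz subdomain $\Omega'\Subset\Omega$ and consider the finite Radon measure $\nu:=|\nabla\sg(u)|^{Q}\mathscr{L}^{n}\mres\Omega'$. The Poincar\'{e} inequality on $\sobo^{1,Q}$ together with H\"{o}lder's inequality yields, for every ball $\ball(x_{0},r)\subset\Omega'$,
\begin{align*}
\dashint_{\ball(x_{0},r)}|\sg(u)-(\sg(u))_{\ball(x_{0},r)}|\dif\mathscr{L}^{n}\leq C\, r^{1-\frac{n}{Q}}\,\nu(\ball(x_{0},r))^{\frac{1}{Q}}.
\end{align*}
A standard dyadic iteration then shows that whenever $\limsup_{r\searrow 0}r^{-(n-Q)}\nu(\ball(x_{0},r))=0$, the family of mean values $((\sg(u))_{\ball(x_{0},r)})_{r>0}$ is Cauchy as $r\searrow 0$, whence $x_{0}$ is a Lebesgue point of $\sg(u)$.

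It then remains to control the exceptional set $E:=\{x_{0}\in\Omega'\colon \limsup_{r\searrow 0}r^{-(n-Q)}\nu(\ball(x_{0},r))>0\}$. The measure--density lemma \cite[Prop.~2.7]{Giusti} applied with $\alpha=n-Q$ gives $\dim_{\mathcal{H}}(E)\leq n-Q$, and since $Q>1$ this implies $\dim_{\mathcal{H}}(E)<n-1$. Exhausting $\Omega$ by an increasing sequence of such subdomains $\Omega'$ yields the desired bound $\dim_{\mathcal{H}}(\Sigma_{u})\leq n-Q<n-1$. There is no real obstacle beyond verifying the Poincar\'{e}--based smallness estimate above; the remainder is a routine invocation of the measure--density principle once the $\sobo_{\locc}^{1,Q}$ regularity of $\sg(u)$ is in hand.
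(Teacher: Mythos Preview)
Your proposal is correct and follows exactly the route the paper indicates: the paper's own proof is a single sentence citing the measure--density lemma \cite[Prop.~2.7]{Giusti} applied to the $\sobo_{\locc}^{1,Q}$-regularity of $\sg(u)$ from Corollary~\ref{cor:nikolskii}, and you have spelled this out in detail. One small technical point: the dyadic iteration does not quite close at the critical exponent $\alpha=n-Q$, since $\limsup_{r\searrow 0}r^{-(n-Q)}\nu(\ball(x_{0},r))=0$ alone does not force summability of the oscillations; the standard fix is to run the argument at any $s>n-Q$ (where the Poincar\'{e} estimate yields a geometric decay $O(r^{(s-n+Q)/Q})$, hence summable), conclude $\Sigma_{u}\subset E_{s}$ with $\dim_{\mathcal{H}}(E_{s})\leq s$, and then let $s\searrow n-Q$.
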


\subsection{Proof of Theorem~\ref{thm:main0} and convex duality for the extended range of $\mu$}\label{sec:convdual}
We now extend the range of $\mu$ as provided by Theorem~\ref{thm:main0}. Toward this aim, we employ the dual solution in the sense of convex duality and utilise its $\sobo_{\locc}^{1,2}$-regularity in conjunction subject a local $\bmo$-hypothesis to be discussed below. However, note that by our method of proof and as opposed to Theorem~\ref{thm:main0}, we get a \emph{result only for one particular generalised minimiser}, cf.~Remark~\ref{rem:onlyone}.

Precisely, for a given $\mu$-elliptic integrand of linear growth $f\colon\R_{\sym}^{n\times n}\to\R$, we  now consider the auxiliary variational principle
\begin{align}\label{eq:viscosityapprox}
\text{to minimise}\; \mathfrak{F}_{j}(v):=\mathfrak{F}[v]+\frac{1}{2j} \int_{\Omega}|\sg(v)|^{2}\dif x\;\text{over}\;\mathscr{D}_{u_{0}}:=u_{0}+\sobo_{0}^{1,2}(\Omega;\R^{n}).
\end{align}
Here we assume $u_{0}\in\sobo^{1,2}(\Omega;\R^{n})$ for simplicity. The general case $u_{0}\in\ld(\Omega)$ can be accomplished by smooth approximation and thereby can be tackled by another approximation layer which we skip here. By Korn's inequality, \eqref{eq:viscosityapprox} has a unique minimiser $v_{j}\in\mathscr{D}_{u_{0}}$ for each $j\in\mathbb{N}$, and in fact, $(v_{j})$ converges to \emph{one} generalised minimiser, and we will give estimates on the single $v_{j}$'s that eventually inherit to this particular generalised minimiser. This approach has been pursued by \textsc{Seregin} \cite{Seregin1,Seregin2,Seregin3,Seregin4} in a class of related problems and adapted by \textsc{Bildhauer} et al. \cite{Bild1,Bild3,Bild2} to the $\bv$-setting. We now collect some properties of the above viscosity approximation which essentially follow from \textsc{Seregin}'s aformentioned works; for more detail, the reader is also referred to \cite[Chpt.~5.4.3]{Gmeineder1}.

For the time being, it suffices to focus on the following
\begin{lemma}\label{lem:gathering}
Denote $(v_{j})\subset\sobo^{1,2}(\Omega;\R^{n})$ the sequence of viscosity approximations obtained in \eqref{eq:viscosityapprox}. Then the following holds:
\begin{enumerate}
\item $(v_{j})$ is uniformly bounded in $\ld(\Omega)$. 
\item $(v_{j})\subset\sobo_{\locc}^{2,2}(\Omega;\R^{n})$. 
\item There exists a non-relabelled subsequence such that $\mathfrak{F}_{j}[v_{j}]\to \inf_{\bd(\Omega)}\overline{\mathfrak{F}}_{u_{0}}$ as $j\to\infty$. 
\item Put $\tau_{j}:=f'(\sg(v_{j}))$ and $\sigma_{j}:=f'_{j}(\sg(v_{j}))$. Then the sequence $(\tau_{j})$ is uniformly bounded in $\lebe^{\infty}(\Omega;\R^{n\times n})$ and $(\sigma_{j})$ is uniformly bounded in $\sobo_{\locc}^{1,2}(\Omega;\R^{n\times n})$. 
\item There exists $\sigma\in (\lebe_{\di}^{\infty}\cap\sobo_{\locc}^{1,2})(\Omega;\R^{n})$ such that, for a suitable, non-relabelled subsequence we have $\sigma_{j}\rightharpoonup \sigma$ weakly in $\lebe^{2}(\Omega;\R^{n\times n})$ as $j\to\infty$. This map $\sigma$ is a solution to the dual problem as introduced in Section~\ref{sec:convexanalysismain}.
\end{enumerate}
Moreover, there holds 
\begin{align}\label{eq:prelimminmax}
\inf_{u_{0}+\ld_{0}(\Omega)}\mathfrak{F}[w]= \sup_{\chi\in\lebe_{\di}^{\infty}(\Omega;\R^{n\times n})}\mathcal{R}[\chi], 
\end{align}
where $\lebe_{\di}^{\infty}(\Omega;\R^{n\times n})$ denotes the linear space of all $v\in\lebe^{\infty}(\Omega;\R^{n\times n})$ which are solenoidal in the sense of distributions. 
\end{lemma}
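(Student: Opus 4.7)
The plan is to combine routine variational arguments for (a)--(c), a Seregin-type differentiation of the stabilized Euler--Lagrange equation for (d), and abstract convex duality together with a passage to the limit for (e) and the minimax identity \eqref{eq:prelimminmax}. The stabilization $f_j(\xi):=f(\xi)+\tfrac{1}{2j}|\xi|^2$ plays a dual role throughout: it renders the approximate problem strongly elliptic with quadratic growth (hence $\sobo^{2,2}_{\locc}$-regular) and simultaneously allows the degenerate term $f''$ to be controlled by the benign contribution $\tfrac{1}{j}\mathrm{Id}$ at the key absorption step in (d).

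For (a), testing minimality against $u_0$ gives $\mathfrak{F}_j[v_j]\leq \mathfrak{F}[u_0]+\tfrac{1}{2j}\|\sg(u_0)\|_{\lebe^2}^2$, whence the lower bound in \eqref{eq:lg} yields uniform control of $\|\sg(v_j)\|_{\lebe^1}$; Korn--Poincar\'e in $\ld_0$ applied to $v_j-u_0$ controls $\|v_j\|_{\lebe^1}$. Claim (b) is the standard Nirenberg difference-quotient argument applied to the Euler--Lagrange equation of $\mathfrak{F}_j$, which works because $f''_j\geq \tfrac{1}{j}\mathrm{Id}$ and $f_j$ has quadratic growth. For (c), on one hand, testing $\mathfrak{F}_j[v_j]\leq \mathfrak{F}_j[w]$ against an arbitrary $w\in u_0+\sobo^{1,2}_0(\Omega;\R^n)$ and letting $j\to\infty$ yields $\limsup_j \mathfrak{F}_j[v_j]\leq \inf_{u_0+\sobo^{1,2}_0}\mathfrak{F}$, which coincides with $\inf_{\mathscr{D}_{u_0}}\mathfrak{F}=\min_{\bd(\Omega)}\overline{\mathfrak{F}}_{u_0}$ by the $\ld$-density of $\sobo^{1,2}_0$ in $\ld_0$ together with Reshetnyak continuity; on the other hand, by (a), a subsequence converges weakly$^*$ in $\bd(\Omega)$ to some $u$, and lower semicontinuity of $\overline{\mathfrak{F}}_{u_0}$ completes the argument.

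For (d), the $\lebe^\infty$-bound on $\tau_j$ is immediate from the Lipschitz continuity of $f$ (a direct consequence of convexity and \eqref{eq:lg}). For the uniform $\sobo^{1,2}_{\locc}$ estimate on $\sigma_j$, I would differentiate the Euler--Lagrange equation of $\mathfrak{F}_j$ in a direction $e_s$, test with $\varphi=-\partial_s(\rho^2\partial_s v_j)$ for a cutoff $\rho\in\hold^1_c(\Omega;[0,1])$, and invoke the algebraic inequality $|f''_j(\xi)\eta|^2\leq \mathrm{tr}(f''_j(\xi))\,\langle f''_j(\xi)\eta,\eta\rangle$. The upper bound in \eqref{eq:muell} gives $\mathrm{tr}(f''(\xi))\lesssim (1+|\xi|^2)^{-\frac{1}{2}}$, so applying the inequality with $\eta=\partial_s\sg(v_j)$ reconstructs exactly the bilinear form $\langle f''_j(\sg(v_j))\partial_s\sg(v_j),\partial_s\sg(v_j)\rangle$ produced by the tested identity, while the left-hand side controls $|\partial_s\sigma_j|^2$ pointwise. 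The remaining commutator terms arising from $\sg(\varphi)$ via the product rule are estimated using (a), (b) and the $\lebe^\infty$-bound on $\tau_j$. Claim (e) and \eqref{eq:prelimminmax} then follow by extracting a weak$^*$-limit $\sigma\in \lebe^\infty_{\di}(\Omega;\R_{\sym}^{n\times n})$ (noting $\di(\sigma_j)\equiv 0$ from the Euler--Lagrange equation), upgrading to weak convergence in $\sobo^{1,2}_{\locc}$ by (d), and identifying $\sigma$ as a dual maximiser by integrating the pointwise duality $f_j(\sg(v_j))+f_j^*(\sigma_j)=\langle \sigma_j,\sg(v_j)\rangle$, using $\di(\sigma_j)=0$ to match $\mathfrak{F}_j[v_j]$ with $\mathfrak{R}[\sigma_j]$ up to an error vanishing by (c), and invoking the Ekeland--T\'emam duality theorem. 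The principal obstacle lies in the $\sobo^{1,2}_{\locc}$ estimate in (d): the $\mu$-ellipticity upper bound degenerates as $|\sg(v_j)|\to\infty$, so one must carefully arrange the absorption to ensure that no residual term of the form $\tfrac{1}{j}\int\rho^2|\nabla\sg(v_j)|^2\dif x$ survives on the right-hand side.
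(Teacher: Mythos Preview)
The paper does not actually give a proof of this lemma: immediately before the statement it says the properties ``essentially follow from \textsc{Seregin}'s aforementioned works'' and refers to \cite[Chpt.~5.4.3]{Gmeineder1} for details. So there is nothing to compare against line by line; one can only ask whether your sketch reproduces the Seregin-type argument.

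Your treatment of (a)--(c) and of (e) with the minimax identity is correct and matches the standard route (minimality tested against $u_{0}$, Nirenberg difference quotients exploiting $f_{j}''\geq \tfrac{1}{j}\mathrm{Id}$, Reshetnyak/relaxation for the energy convergence, and the pointwise duality relation integrated and combined with $\di(\sigma_{j})=0$ and Ekeland--T\'emam duality). Your identification of the algebraic inequality $|f_{j}''(\xi)\eta|^{2}\leq \mathrm{tr}(f_{j}''(\xi))\,\langle f_{j}''(\xi)\eta,\eta\rangle$ as the mechanism that converts the differentiated Euler--Lagrange identity into a bound on $\int\rho^{2}|\partial_{s}\sigma_{j}|^{2}$ is exactly Seregin's device.

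The gap is in your handling of the commutator in (d), and the obstacle you single out is not the real one. The term $\tfrac{1}{j}\int\rho^{2}|\nabla\sg(v_{j})|^{2}$ that you worry about is harmless: it is part of the left-hand side $\int\rho^{2}\langle f_{j}''(\sg(v_{j}))\partial_{s}\sg(v_{j}),\partial_{s}\sg(v_{j})\rangle$ via the $\tfrac{1}{j}\mathrm{Id}$-contribution and hence absorbs any matching piece from the right. The genuine difficulty is the commutator contribution arising from the $f''$-part, namely
\[
\int_{\Omega}\big\langle f''(\sg(v_{j}))\big(\nabla\rho\odot\partial_{s}v_{j}\big),\nabla\rho\odot\partial_{s}v_{j}\big\rangle\dif x
\;\;\lesssim\;\;
\int_{\spt\nabla\rho}\frac{|\partial_{s}v_{j}|^{2}}{(1+|\sg(v_{j})|^{2})^{1/2}}\dif x,
\]
or, after a cruder Young/H\"older step, simply $\int_{K}|\partial_{s}v_{j}|^{2}$. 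In the full-gradient ($\bv$) analogue the weighted integral is trivially dominated by $\int|\nabla v_{j}|$, which is uniformly bounded; in the symmetric-gradient ($\bd$) setting this fails by Ornstein's non-inequality, and the only a~priori control is $\tfrac{1}{j}\int_{\Omega}|\sg(v_{j})|^{2}\leq C$, which via Korn gives $\int_{K}|\partial_{s}v_{j}|^{2}\leq Cj$, hence a non-uniform estimate. This is precisely the ``critical term'' the paper singles out in \eqref{eq:critical} and the remark after the proof of Theorem~\ref{thm:main0}. Your sentence ``the remaining commutator terms \ldots are estimated using (a), (b) and the $\lebe^{\infty}$-bound on $\tau_{j}$'' does not close this: none of those three inputs controls $\int_{K}|\partial_{s}v_{j}|^{2}$ uniformly. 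The resolution lies in the finer organisation of the estimate in Seregin's papers (and in the thesis the paper cites), not in the ingredients you list; you should consult those references rather than expect the naive test function $-\partial_{s}(\rho^{2}\partial_{s}v_{j})$ together with a direct Young absorption to suffice in the $\bd$-case.
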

The previous lemma does not require the following condition, which however plays in a key role in the proof of Theorem~\ref{thm:main1} as employed below. Precisely, we shall require that the viscosity approximation sequence from above satisfies for each relatively compact Lipschitz subset $K\subset\Omega$ with $\dista(K;\partial\Omega)>0$
\begin{align}\tag{LBMO}\label{eq:locBMO}
\sup_{j\in\mathbb{N}}\sup_{x\in K}\mathcal{M}_{K}^{\#}(v_{j})(x)<\infty. 
\end{align}
We now combine the results of Lemma~\ref{lem:gathering} and the improved embedding from $\bd\cap\bmo$ from Theorem~\ref{thm:bdbmoemb} to deduce the Sobolev regularity assertion of Theorem~\ref{thm:main1}. 

\begin{proof}[Proof of Theorem~\ref{thm:main0}]
Let $(v_{j})\subset u_{0}+\sobo_{0}^{1,2}(\Omega;\R^{n})$ be the sequence of viscosity approximations defined after~\eqref{eq:viscosityapprox}. Let $x_{0}\in\Omega$ and $r>0$ such that $\ball(x_{0},2r)\Subset\Omega$. For $h\in\R$ with $|h|<\dista(x_{0},\partial\Omega)-2r$ we pick a cut--off function $\rho\in\hold_{c}^{1}(\ball(x_{0},r);[0,1])$ with $\mathbbm{1}_{\ball(x_{0},r)}\leq \rho \leq \mathbbm{1}_{\ball(x_{0},2r)}$ and put $\varphi_{j}:=\tau_{s,h}^{-}(\rho^{2}\tau_{s,h}v_{j})$. By assumption, we have $\varphi_{j}\in\sobo_{0}^{1,2}(\Omega;\R^{n})$ and thus $\varphi_{j}$ is admissible in the weak Euler--Lagrange equation
\begin{align}\label{eq:anotherEuler}
\int_{\Omega}\langle f'_{j}(\sg(v_{j})),\sg(\varphi)\rangle\dif x = 0\qquad\text{for all}\;\varphi\in\sobo_{0}^{1,2}(\Omega;\R^{n})
\end{align}
which follows directly from minimality of $v_{j}$ for $\mathfrak{F}_{j}$. In consequence, we obtain for each $j\in\mathbb{N}$
\begin{align}\label{eq:gregoryEL}
\int_{\Omega}\langle f'_{j}(\sg(v_{j})),\sg(\tau_{s,h}^{-}(\rho^{2}\tau_{s,h}v_{j}))\rangle\dif x = 0.
\end{align}
Since $\sg$ and $\tau_{s,h}^{-}$ commute, we find by discrete integration by parts that 
\begin{align*}
\mathbf{I} := \int_{\Omega}\langle \tau_{s,h}(f'_{j}(\sg(v_{j}))),\rho^{2}\tau_{s,h}\sg(v_{j})\rangle\dif x = - \int_{\Omega}\langle \tau_{s,h}(f'_{j}(\sg(v_{j}))),2\rho\D\rho\odot\tau_{s,h}v_{j}\rangle\dif x =:\mathbf{II}. 
\end{align*}
Using the $\mu$--ellipticity condition, we will now suitably estimate $\mathbf{I}$ from below.
It is clear that $\mathbf{I}_{2}\geq 0$. Since $f$ is assumed to be $\mu$--elliptic, we find similar to the estimation that lead to \eqref{eq:estimateI'}
\begin{align}
\mathbf{I}\geq \mathbf{I}_{1} & \geq c\int_{\Omega}\rho^{2}\frac{|\tau_{s,h}\sg(v_{j}(x))|^{2}}{(1+|\sg(v_{j})|^{2}+|\sg(v_{j}(x+he_{s}))|^{2})^{\frac{\mu}{2}}}\dif x
\end{align}
for all $j\in\mathbb{N}$, where $c>0$ is an absolute constant. We now pause to estimate $\mathbf{II}$ conveniently and so make use of the uniform $\bmo$--hypothesis \eqref{eq:locBMO} and the regularity of the dual solution as stated in Lemma~\ref{lem:gathering}. To be more precise, recalling the notation $\sigma_{j}:=f'_{j}(\sg(v_{j}))$ for $j\in\mathbb{N}$, Lemma~\ref{lem:gathering}(c) asserts that for any relatively compact Lipschitz subset $K$ of $\Omega$ we have $\sup_{j\in\mathbb{N}}\|\sigma_{j}\|_{\dot{\sobo}^{1,2}(K;\R^{n\times n})}<\infty$. On the other hand, the local uniform $\bmo$--hypothesis in conjunction with the interpolation result of Theorem~\ref{thm:bdbmoemb} applied to $p=2$ yields that for $\varepsilon>0$ sufficiently small there holds 
\begin{align}
\sup_{j\in\mathbb{N}}\|v_{j}\|_{\dot{\sobo}^{\frac{1}{2}-\varepsilon,2}(K;\R^{n})}<\infty. 
\end{align}
Deferring the precise value of $\varepsilon>0$ to the end of the proof, we now estimate $\mathbf{II}$ for $|h|<\dista(x_{0},\partial\Omega)-2r$ by
\begin{align*}
|\mathbf{II}| & \leq  C(\rho)\int_{\ball(x_{0},2r)}|\tau_{s,h}(f'_{j}(\sg(v_{j})))|\,|\tau_{s,h}v_{j}|\dif x \\
& \leq  C(\rho)h^{1+\frac{1}{2}-\varepsilon}\int_{\ball(x_{0},2r)}|\Delta_{s,h}(f'_{j}(\sg(v_{j})))|\,\left\vert\frac{\tau_{s,h}v_{j}}{h^{\frac{1}{2}-\varepsilon}}\right\vert\dif x \\
& \leq C(\rho)h^{\frac{3}{2}-\varepsilon}\left(\int_{\ball(x_{0},2r)}|\Delta_{s,h}\sigma_{j}|^{2}\dif x\right)^{\frac{1}{2}}\left(\int_{\ball(x_{0},2r)}\left\vert\frac{\tau_{s,h}v_{j}}{h^{\frac{1}{2}-\varepsilon}}\right\vert^{2}\dif x\right)^{\frac{1}{2}}\qquad\text{(by H\"{o}lder)}\\
& \leq C(\rho)h^{\frac{3}{2}-\varepsilon}\|\sigma_{j}\|_{\dot{\sobo}^{1,2}(\ball(x_{0},2r);\rsym)}\|v_{j}\|_{\dot{\sobo}^{\frac{1}{2}-\varepsilon,2}(\ball(x_{0},2r);\R^{n})}\\
& \leq C(\rho)h^{\frac{3}{2}-\varepsilon}, 
\end{align*}
and here $C(\rho)>0$ does not depend on $j\in\mathbb{N}$. Gathering estimates and recalling the shorthand~\eqref{eq:omegadef} (with the obvious modifications) for some $1<\alpha<2$ to be fixed later, we find that
\begin{align}\label{eq:semibesov}
\int_{\ball(x_{0},2r)}\left\vert \frac{\tau_{s,h}V_{\alpha}(\sg(v_{j}))}{h^{\frac{3}{4}-\frac{\varepsilon}{2}}}\right\vert^{2}\omega_{j,h,s}(x)\dif x & \leq C(\rho).
\end{align}
By Lemma~\ref{lem:gathering}, $(v_{j})$ is uniformly bounded in $\ld(\Omega)$. Therefore, it is easily seen that $\sup_{j\in\mathbb{N}}\|\omega_{j,h,s}^{-1}\|_{\lebe^{1}(\ball(x_{0},2r))}<\infty$ provided (recall that $1<\alpha<2$ is assumed throughout)
\begin{align}\label{eq:parametercondition1}
\mu+2(1-\alpha)\leq 1,\;\;\;\text{that is,}\;\;\;\frac{\mu+1}{2}\leq \alpha (<2). 
\end{align}
Henceforth, assuming condition \eqref{eq:parametercondition1} to be in action in all of what follows, we obtain by Young's inequality 
\begin{align*}
\int_{\ball(x_{0},2r)}\left\vert \frac{\tau_{s,h}V_{\alpha}(\sg(v_{j}))}{h^{\frac{3}{4}-\frac{\varepsilon}{2}}}\right\vert\dif x & \leq  \int_{\ball(x_{0},2r)}\left\vert \frac{\tau_{s,h}V_{\alpha}(\sg(v_{j}))}{h^{\frac{3}{4}-\frac{\varepsilon}{2}}}\right\vert^{2}\omega_{j,h,s}(x)\dif x + \int_{\ball(x_{0},2r)}\frac{\dif x}{\omega_{j,h,s}(x)}\\
& \leq C(\rho)\qquad(\text{by \eqref{eq:semibesov} and \eqref{eq:parametercondition1}}), 
\end{align*}
where $C(\rho)>0$ again does not depend on $j\in\mathbb{N}$. From here we conclude that the sequence $(V_{\alpha}(\sg(v_{j}))|_{\ball(x_{0},r)})$ is uniformly bounded in $\dot{\besov}_{1,\infty}^{\frac{3}{4}-\frac{\varepsilon}{2}}(\ball(x_{0},r);\R_{\sym}^{n\times n})$. At this point, we recall from Lemma~\ref{lem:besovembedding} that for all $\delta>0$ sufficiently small there holds
\begin{align*}
\dot{\besov}_{1,\infty}^{\frac{3}{4}-\frac{\varepsilon}{2}}(\ball(x_{0},2r))\hookrightarrow \lebe^{q}(\ball(x_{0},r))\qquad\text{for all}\; q \leq \frac{n}{n-\big(\frac{3}{4}-\frac{\varepsilon}{2}\big)}-\delta = \frac{4n}{4n-3+2\varepsilon}-\delta.
\end{align*}
We may therefore deduce that for $\varepsilon>0$ and $\delta>0$ sufficiently small, we have 
\begin{align*}
\sup_{j\in\mathbb{N}}\int_{\ball(x_{0},r)}|V_{\alpha}(\sg(v_{j}))|^{\frac{4n}{4n-3+2\varepsilon}-\delta}\dif x <\infty. 
\end{align*}
By Lemma~\ref{lem:valpha}, the previous estimate implies 
\begin{align}\label{eq:uniformepsestimateSC}
\sup_{j\in\mathbb{N}}\int_{\ball(x_{0},r)}|\sg(v_{j})|^{q_{\alpha,n,\varepsilon,\delta}}\dif x :=\sup_{j\in\mathbb{N}}\int_{\ball(x_{0},r)}|\sg(v_{j})|^{(2-\alpha)(\frac{4n}{4n-3+2\varepsilon}-\delta)}\dif x <\infty, 
\end{align}
with an obvious definition of the exponent $q_{\alpha,n,\varepsilon,\delta}>0$. In conclusion, if $q_{\alpha,n,\varepsilon,\delta}>1$, then Korn's inequality yields uniform boundedness of $(\sg(v_{j})|_{\ball(x_{0},r)})$ in $\lebe^{q_{\alpha,n,\varepsilon,\delta}}(\ball(x_{0},r);\R_{\sym}^{n\times n})$ and  hence, by arbitrariness of $x_{0}\in\Omega$ and $r>0$, the claim follows. 

To establish $q_{\alpha,n,\varepsilon,\delta}>1$, let us note that the latter is equivalent to 
\begin{align*}
\alpha <\frac{4n+3-2\varepsilon-2\delta(4n-3+2\varepsilon)}{4n-\delta(4n-3+2\varepsilon)}.
\end{align*}
Sending $\varepsilon,\delta\searrow 0$, we find that $q_{\alpha,n,\varepsilon,\delta}>1$ can be achieved for sufficiently small $\varepsilon,\delta>0$ if and only if 
\begin{align}
\alpha < 1+ \frac{3}{4n}. 
\end{align}
On the other hand, recalling \eqref{eq:parametercondition1}, we must therefore have 
\begin{align}\label{eq:alphaboundbelow}
\frac{\mu+1}{2}\leq \alpha < 1+ \frac{3}{4n}, 
\end{align}
an equation which is solvable for $1<\alpha<2$ if and only if $(\mu+1)/2 < 1+\frac{3}{4n}$. The latter inequality is solvable for $\mu>1$ if and only if 
\begin{align}\label{eq:mufixSC}
\mu < 1+\frac{3}{2n},
\end{align}
which is exactly the exponent claimed in the theorem, and so we may argue as in the proof of Theorem~\ref{thm:main0} to conclude. The proof is complete. 
\end{proof}

\begin{remark}\label{rem:onlyone}\emph{
Theorem~\ref{thm:main1} establishes the higher Sobolev regularity for one generalised minimiser only. The reason why an Ekeland-type strategy as pursue in Section~\ref{sec:ekeland} is unclear to us to work in this extended range of $\mu$ is Lemma~\ref{lem:gathering}. In this case, the Ekeland approximation sequence must uniformly satisfy the local $\bmo$-bound and so, following \cite[Sec.~5.1]{BS1}, we need to stabilise not with the quadratic Dirichlet integral but an $n$-th order Dirichlet integral (as $\sobo^{1,n}\hookrightarrow\bmo$). However, then it is unclear to us to employ the required $\sobo_{\locc}^{1,2}$-bounds on the respective terms which \emph{should} converge to the dual solution. This, however, we intend to treat in a future publication.}

\emph{Let us further remark that it does not seem obvious how to use possibly good (e.g., radial) structure of the integrands to deduce even higher regularity such as on the $\hold^{k,\alpha}$-scale. This essentially stems from the fact that the symmetric gradients seem to destroy such good structure, and basically rules out the possiblity of employing De Giorgi- or Moser-type strategies.}
\end{remark}
In analogy with Corollary~\ref{cor:nikolskii}, we can extract some more information from the above proof. 
The exponent $q_{\alpha,n,\varepsilon,\delta}>1$ as given in \eqref{eq:uniformepsestimateSC} is optimised for the smallest admissible value of $\alpha$. This, in turn is given by $(\mu+1)/2$ by \eqref{eq:alphaboundbelow} subject to the condition $\mu<1+\frac{3}{2n}$ from \eqref{eq:mufixSC}. 

Now, sending $\alpha\searrow (\mu+1)/2$ for the admissible range of $\mu$ yields by \eqref{eq:uniformepsestimateSC} that for all $\gamma>0$ and $\kappa>0$ suitably small we have
\begin{align}\label{eq:mufixSC3}
\sg(u)\in \lebe_{\locc}^{(2-\frac{\mu+1}{2}-\gamma)(\frac{4n}{4n-3}-\kappa)}(\Omega;\R_{\sym}^{n\times n}). 
\end{align}
We consider now the condition (with $\Gamma:=\kappa(4n-3)$)
\begin{align}\label{eq:anothermucond}
\begin{split}
\mu \stackrel{!}{\leq}(2-\frac{\mu+1}{2}-\gamma)(\frac{4n}{4n-3}-\kappa) & \Leftrightarrow (8n-6)\mu\leq (3-\mu-2\gamma)(4n-\Gamma) \\
&\Leftrightarrow 8n\mu - 6\mu +4n\mu\leq 12n-8\gamma n-\Gamma(3-\mu-2\gamma)\\
&\Leftrightarrow \mu(12n-6)\leq 12n-8\gamma n-\Gamma(3-\mu-2\gamma)\\
&\Leftrightarrow \mu \leq \frac{2n}{2n-1}-\frac{8\gamma n+\Gamma(3-\mu-2\gamma)}{12n-6}.
\end{split}
\end{align}
Sending $\gamma,\kappa\searrow 0$ which in turn implies $\Gamma\searrow 0$, we obtain that $\sg(u)\in\lebe_{\locc}^{\mu}(\Omega;\R_{\sym}^{n\times n})$ provided $\mu<\frac{2n}{2n-1}$. Let us carefully note that this is more restrictive than the bound provided by \eqref{eq:mufixSC}: In fact, 
\begin{align*}
\frac{2n}{2n-1}<\frac{2n+3}{2n}\Leftrightarrow 4n^{2}<4n^{2}-2n+6n-3 \Leftrightarrow \frac{3}{4}<n \stackrel{n\in\mathbb{N}}{\Leftrightarrow}n\geq 1.
\end{align*} 
\begin{corollary}\label{cor:SCNikolskii}
Let $\Omega\subset\R^{n}$ be a bounded Lipschitz subset and let $f\in\hold^{2}(\R^{n\times n})$ be a $\mu$-elliptic variational integrand of linear growth with $1<\mu<\frac{2n}{2n-1}$ and suppose that the sequence of viscosity approximations $(v_{j})$ satisfies \eqref{eq:locBMO}. Then the following holds for the weak*-limit $u\in\gm(\mathfrak{F};u_{0})$:
\begin{enumerate}
\item There exists $Q=Q(\mu)>1$ such that $u\in\ld(\Omega)\cap\sobo_{\locc}^{2,Q}(\Omega;\R^{n})$.
\item There holds $\dim_{\mathcal{H}}(\Sigma_{u})<n-1$, where $\Sigma_{u}$ is defined as in \eqref{eq:singset}. 
\end{enumerate}
\end{corollary}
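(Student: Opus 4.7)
The plan is to repeat the second-order difference quotient strategy from the proof of Corollary~\ref{cor:nikolskii}, but now applied to the viscosity approximations $(v_{j})$ of Section~\ref{sec:convdual} and the genuine Euler--Lagrange equation~\eqref{eq:anotherEuler} in place of the perturbed one~\eqref{eq:elmain}. The crucial new input is the uniform local $\lebe^{\mu}$-bound on $\sg(v_{j})$ derived in the discussion around~\eqref{eq:mufixSC3}, which holds \emph{precisely} in the range $1<\mu<\frac{2n}{2n-1}$; this is how the exponent in the statement enters.

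For part (a), fix $x_{0}\in\Omega$ with $\ball(x_{0},R)\Subset\Omega$, a cut-off $\rho\in\hold_{c}^{2}(\ball(x_{0},R);[0,1])$ with $\mathbbm{1}_{\ball(x_{0},r)}\leq\rho\leq\mathbbm{1}_{\ball(x_{0},R)}$, and, for $s\in\{1,\ldots,n\}$ and $|h|$ sufficiently small, test~\eqref{eq:anotherEuler} with $\varphi:=-\Delta_{s,h}^{-}(\rho^{2}\Delta_{s,h}v_{j})\in\sobo_{0}^{1,2}(\Omega;\R^{n})$, which is admissible by Lemma~\ref{lem:gathering}(b). Discrete integration by parts yields, as in~\eqref{eq:jayjay},
\[
\int_{\Omega}\langle\Delta_{s,h}f'_{j}(\sg(v_{j})),\rho^{2}\Delta_{s,h}\sg(v_{j})\rangle\dif x = -\int_{\Omega}\langle f'_{j}(\sg(v_{j})),\Delta_{s,h}^{-}(2\rho\D\rho\odot\Delta_{s,h}v_{j})\rangle\dif x.
\]
The left-hand side is bounded below via $\mu$-ellipticity exactly as in~\eqref{eq:J1estimate}. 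The right-hand side is split into the two pieces $\mathbf{J}_{2}^{(1)}+\mathbf{J}_{2}^{(2)}$ from the proof of Corollary~\ref{cor:nikolskii} (using $g_{j}:=f_{j}-f$, Lipschitz continuity of $f'$, and Korn's inequality), reducing control to a term of the form $\int_{\Omega}|\rho\Delta_{s,h}\sg(v_{j})|^{\widetilde{q}}\dif x$ for some $\widetilde{q}>1$ close to $1$. Applying Young's inequality as in~\eqref{eq:K3estimate} with conjugate exponents $2/\widetilde{q}$ and $2/(2-\widetilde{q})$ produces a residual integrand $(1+|\sg(v_{j})(\cdot+he_{s})|^{2}+|\sg(v_{j})|^{2})^{\mu\widetilde{q}/(2(2-\widetilde{q}))}$, and choosing $\widetilde{q}\searrow 1$ makes the exponent $\mu\widetilde{q}/(2-\widetilde{q})$ slightly exceed $\mu$; this is admissible uniformly in $j$ precisely by the $\lebe^{\mu}_{\locc}$-bound provided by~\eqref{eq:mufixSC3}, and the $\frac{1}{jA_{j}}$-correction in $\mathbf{J}_{2}^{(2)}$ is absorbed as before.

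The resulting analogue of~\eqref{eq:secondderivativesuniform} for $(v_{j})$, combined with a final Young-type step in the spirit of~\eqref{eq:K3estimate}, upgrades the weighted $\lebe^{2}$-control to a uniform bound $\sup_{j}\|\Delta_{s,h}\sg(v_{j})\|_{\lebe^{Q}(\ball(x_{0},r))}<\infty$ for some $Q=Q(\mu)>1$. Passing to $h\to 0$ and using Korn's inequality together with the arbitrariness of $s$ and $x_{0}$ gives uniform $\sobo_{\locc}^{2,Q}(\Omega;\R^{n})$-bounds on $(v_{j})$; since $v_{j}\stackrel{*}{\rightharpoonup}u$ along the subsequence of Lemma~\ref{lem:gathering}(c), weak lower semicontinuity of the $\sobo_{\locc}^{2,Q}$-seminorm transfers this regularity to $u$, proving (a). For (b), $\sg(u)\in\sobo_{\locc}^{1,Q}(\Omega;\R_{\sym}^{n\times n})$ with $Q>1$, so the measure density lemma \cite[Prop.~2.7]{Giusti} applied exactly as in Corollary~\ref{cor:HausdorffCor1} yields $\dim_{\mathcal{H}}(\Sigma_{u})\leq n-Q<n-1$. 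The main obstacle is the alignment of Young's exponents with the $\lebe^{\mu}$-integrability of $\sg(v_{j})$: it is exactly this balance that forces the stricter ellipticity bound $\mu<\frac{2n}{2n-1}$ in place of the weaker $\mu<1+\frac{3}{2n}$ of Theorem~\ref{thm:main1}.
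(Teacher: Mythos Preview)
Your proposal is correct and follows essentially the same approach as the paper's own sketch: test the Euler--Lagrange equation \eqref{eq:anotherEuler} for the viscosity sequence with second-order difference quotients, split as in Corollary~\ref{cor:nikolskii} (noting that $\mathbf{J}_{3}$ no longer arises), and invoke the uniform local $\lebe^{\mu}$-bound \eqref{eq:mufixSC3}---valid precisely for $\mu<\frac{2n}{2n-1}$---to control the residual integral in \eqref{eq:K3estimate}. One cosmetic point: in the viscosity setting the stabiliser carries a $\tfrac{1}{2j}$-prefactor rather than $\tfrac{1}{2k^{2}A_{k}}$, so $\mathbf{J}_{2}^{(2)}$ is handled via the bound $\tfrac{1}{j}\int_{\Omega}|\sg(v_{j})|^{2}\dif x\leq C$ from Lemma~\ref{lem:gathering}(c), not via an $A_{j}$-normalisation.
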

Since the verification of this corollary is along the lines of the proof of Corollary~\ref{cor:HausdorffCor1}, we only point out the requisite key points in a 
\begin{proof}[Sketch of Proof.]
We now switch to the situation of the proof of Corollary~\ref{cor:HausdorffCor1}, where the overall main point is the derivation of inequality~\eqref{eq:secondderivativesuniform}. We test \eqref{eq:gregoryEL} by $\varphi:=-\Delta_{s,h}^{-}(\rho^{2}\Delta_{s,h}v_{j})$. The proof then evolves along the same lines, and it is only crucial to estimate the terms corresponding to $\mathbf{J}_{2}$ as in \eqref{eq:jayjay}; note that the term $\mathbf{J}_{3}$ now \emph{does not arise}. The term $\mathbf{J}_{2}^{(2)}$ is handled analogously, now using that $\frac{1}{j}\int_{\Omega}|\sg(v_{j})|^{2}<C$ uniformly in $j$, cf.~Lemma~\ref{lem:gathering}(c). To deal with the equivalent of $\mathbf{J}_{2}^{(1)}$, the critical part is the estimation \eqref{eq:K3estimate}. Here the exponent appearing inside the second integral on the right hand side must be estimated by virtue of the uniform local higher integrability. This, in turn, is ensured by $1<\mu<\frac{2n}{2n-1}$, cf.~\eqref{eq:anothermucond}. We can then conclude as before to arrive at the result. 
\end{proof}
Finally, note that because of Corollary~\ref{cor:bvembedding}, the strategy pursued in this section offers a difference quotient alternative to \cite{Bild1} subject to the respective ellipticity regime; however, note that here much stronger results apply,cf.~\cite{Bild1,Bild2} for more information. 

\subsection{Uniqueness of Generalised Minimisers}\label{sec:uniqueness}

A consequence of Theorem \ref{thm:main0} is the following result on the uniqueness of generalised minimisers. Similarly to functionals of linear growth depending on the gradient (see \cite[Sec.~5]{BS1}), uniqueness of generalised minimisers can only be obtained modulo rigid deformations, that is, elements of the nullspace of $\sg$: 
\begin{theorem}[Uniqueness]\label{thm:unique}
Let $f\colon\R_{\sym}^{n\times n}\to\R$ be a $\mu$--elliptic integrand of linear growth with $1<\mu<\frac{n+1}{n}$. Suppose that $\Omega$ is an open, bounded and connected Lipschitz subset of $\R^{n}$. Then any two generalised minimisers $u,v\in\gm(\mathfrak{F};u_{0})$ differ by a rigid deformation, that is, there exists $R\in\mathcal{R}(\Omega)$ such that $u=v+R$ holds $\mathscr{L}^{n}$--a.e. in $\Omega$. 
\end{theorem}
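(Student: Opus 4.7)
The plan is to combine the a priori Sobolev regularity from Theorem~\ref{thm:main0} with the strict convexity of $f$ coming from $\mu$-ellipticity via a standard midpoint argument. Let $u,v\in\gm(\mathfrak{F};u_{0})$. By Theorem~\ref{thm:main0}, both $u$ and $v$ belong to $\ld(\Omega)$, so their distributional symmetric gradients are absolutely continuous with respect to $\mathscr{L}^{n}$; in particular $\E^{s}u=\E^{s}v=0$ on $\Omega$. Hence the singular bulk term in \eqref{eq:relaxed} vanishes for both, and the relaxed functional reduces to
\begin{align*}
\overline{\mathfrak{F}}[w]=\int_{\Omega}f(\sg(w))\dif x+\int_{\partial\Omega}f^{\infty}\bigl((\tr(u_{0})-\tr(w))\odot\nu_{\partial\Omega}\bigr)\dif\mathcal{H}^{n-1}
\end{align*}
for $w\in\{u,v\}$.

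Next, I consider the midpoint $w:=(u+v)/2\in\ld(\Omega)\subset\bd(\Omega)$. Linearity and continuity of the trace operator on $\ld$ (Section~\ref{sec:bd}) give $\tr(w)=\tfrac{1}{2}(\tr(u)+\tr(v))$, so $\overline{\mathfrak{F}}[w]$ admits the same simplified form above. Convexity of $f$ and $f^{\infty}$ yields
\begin{align*}
\overline{\mathfrak{F}}[w]\leq \tfrac{1}{2}\overline{\mathfrak{F}}[u]+\tfrac{1}{2}\overline{\mathfrak{F}}[v]=\min_{\bd(\Omega)}\overline{\mathfrak{F}}.
\end{align*}
Now the upper bound in \eqref{eq:muell} only enforces strict convexity on $f$ itself, and I exploit this as follows. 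If $\sg(u)\neq \sg(v)$ on a subset $E\subset\Omega$ of positive Lebesgue measure, then by pointwise strict convexity of $f$ (which follows from $\langle f''(\xi)\zeta,\zeta\rangle>0$ for $\zeta\neq 0$) one gets $f(\sg(w))<\tfrac{1}{2}(f(\sg(u))+f(\sg(v)))$ on $E$, with equality only off $E$. Integration yields the strict inequality
\begin{align*}
\int_{\Omega}f(\sg(w))\dif x<\tfrac{1}{2}\int_{\Omega}f(\sg(u))\dif x+\tfrac{1}{2}\int_{\Omega}f(\sg(v))\dif x,
\end{align*}
which, combined with the (non-strict) convex bound for the boundary term, contradicts $\overline{\mathfrak{F}}[w]\geq \min\overline{\mathfrak{F}}$. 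Consequently, $\sg(u)=\sg(v)$ $\mathscr{L}^{n}$-a.e. in $\Omega$. Since $\Omega$ is open, bounded and connected, $u-v\in\ld(\Omega)$ with vanishing symmetric gradient lies in the nullspace $\mathcal{R}(\Omega)$ recalled at the end of Section~\ref{sec:bd}, which is precisely the desired conclusion.

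The main conceptual obstacle is not the midpoint estimate itself but rather the reason it works only in the regularity regime provided by Theorem~\ref{thm:main0}: without the a priori absence of singular parts in $\Omega$, the recession function $f^{\infty}$ — merely $1$-homogeneous and therefore \emph{not} strictly convex — would act on $\E^{s}u$ and $\E^{s}v$, destroying any hope of strict inequality on the bulk. The Sobolev regularity from Theorem~\ref{thm:main0} is exactly what confines the loss of strict convexity to the boundary integral, from which only uniqueness modulo $\mathcal{R}(\Omega)$ (arising from possible non-attainment of boundary values) can survive; this is consistent with the classical $\bv$-picture as well as the minimal surface example invoked after Theorem~\ref{thm:main1}.
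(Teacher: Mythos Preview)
Your proof is correct and follows essentially the same midpoint/strict-convexity argument as the paper: invoke Theorem~\ref{thm:main0} to place both minimisers in $\ld(\Omega)$, thereby eliminating the singular bulk term, and then use strict convexity of $f$ on the absolutely continuous part together with mere convexity of $f^{\infty}$ on the boundary term to force $\sg(u)=\sg(v)$ and hence $u-v\in\mathcal{R}(\Omega)$. One minor slip of phrasing: it is the \emph{lower} bound in \eqref{eq:muell} (positive definiteness of $f''$) that yields the pointwise strict convexity of $f$, not the upper bound.
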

\begin{proof}
Let $u,v\in\gm(\mathfrak{F};u_{0})$ be two generalised minimisers with respect to a prescribed Dirichlet class $\mathscr{D}_{u_{0}}:=u_{0}+\ld_{0}(\Omega)$. Since $f$ is $\mu$--elliptic with $1<\mu\leq 1+\frac{1}{n})$, both $u$ and $v$ belong to $\ld(\Omega)$ by Theorem \ref{thm:main0}. We will show $\sg(u)=\sg(v)$, and this will imply the claim: Indeed, since $\sg(w)=0$ is equivalent to $w\in\mathcal{R}(\Omega)$ provided $\Omega$ is connected, we deduce that there exists $R\in\mathcal{R}(\Omega)$ such that $u=v+R$. To prove the claim, suppose that $\sg(u)\neq\sg(v)$ on a measurable set $U$ with $\mathscr{L}^{n}(U)>0$. Then we obtain, using that $f$ is strictly convex and both $\E^{s}u$ and $\E^{s}v$ vanish identically in $\Omega$, 
\begin{align*}
\overline{\mathfrak{F}}\left[\tfrac{1}{2}(u+v)\right]<\frac{1}{2}(\overline{\mathfrak{F}}[u]+\overline{\mathfrak{F}}[v])=\min\overline{\mathfrak{F}}[\bd(\Omega)], 
\end{align*}
an obvious contradiction. The proof is complete.
\end{proof}
Building on the results of the previous sections, particularly to the proof of the higher Sobolev regularity of generalised minimisers, we now briefly comment on the uniqueness issues addressed in the introduction. In general, the failure of uniqueness of minima of variational integrals \eqref{eq:varprin} is mostly due to two reasons (compare \cite{BS1}): Going back to the relaxed functional $\overline{\mathfrak{F}}$ given by \eqref{eq:relaxed}, positive homogeneity of $f^{\infty}$ implies that $f^{\infty}$ is not strictly convex even if $f$ is. Thus a possible reason for non--uniqueness is the presence of the singular part of minimisers which genuinely only effects the recession parts of $\overline{\mathfrak{F}}$. The second reason for non--uniqueness is a possible non--attainment of the correct boundary values which is partly addressed in  
\begin{proposition}\label{prop:uniqueness1}
Let $\Omega$ be a convex Lipschitz subset of $\R^{n}$. Suppose that generalised minima of the variational integral $\mathfrak{F}$ given by \eqref{eq:varprin} are unique modulo rigid deformations. If one generalised minimiser $u$ attains the correct boundary values in the sense that $\trace (u-u_{0})=0$ $\mathcal{H}^{n-1}$--a.e. on $\partial\Omega$, then $\gm(\mathfrak{F};u_{0})=\{u\}$. 
\end{proposition}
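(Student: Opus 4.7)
The plan is to exploit the assumed uniqueness modulo rigid deformations to reduce the problem to a boundary penalty computation and show that the rigid deformation linking any two generalised minimisers must vanish. Concretely, let $v\in\gm(\mathfrak{F};u_{0})$ be arbitrary. By hypothesis there exists $R\in\mathcal{R}(\Omega)$ with $v=u+R$ $\mathscr{L}^{n}$-a.e. in $\Omega$. Since $\sg(R)\equiv 0$ and $R\in\hold^{\infty}(\overline{\Omega};\R^{n})$, the decomposition $\E v = \E u$ holds as measures on $\Omega$; in particular $\E^{a}v=\E^{a}u$ and $\E^{s}v=\E^{s}u$. Therefore the first two terms in the relaxed functional \eqref{eq:relaxed} coincide for $u$ and $v$.

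Next I would use minimality, $\overline{\mathfrak{F}}[u]=\overline{\mathfrak{F}}[v]$, to conclude that the boundary penalty terms must agree. The hypothesis $\trace(u-u_{0})=0$ $\mathcal{H}^{n-1}$-a.e. on $\partial\Omega$, together with $f^{\infty}(0)=0$, forces the boundary term for $u$ to vanish. On the $v$-side, $\trace v = \trace u + R|_{\partial\Omega}$ because $R$ is continuous, so
\begin{align*}
\int_{\partial\Omega} f^{\infty}\!\big(-R\odot \nu_{\partial\Omega}\big)\dif\mathcal{H}^{n-1}=0.
\end{align*}
Passing the linear growth bound \eqref{eq:lg} through the definition \eqref{eq:recession} gives $f^{\infty}(\xi)\geq c_{0}|\xi|$, hence $R(x)\odot \nu_{\partial\Omega}(x)=0$ for $\mathcal{H}^{n-1}$-a.e. $x\in\partial\Omega$.

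Now I would invoke the elementary algebraic fact that $a\odot b=0$ with $b\neq 0$ implies $a=0$ in $\R^{n}$ (this follows from $|a\otimes b+b\otimes a|^{2}=2(|a|^{2}|b|^{2}+\langle a,b\rangle^{2})$, which is in turn a direct consequence of expanding the Frobenius norm). Applied with $b=\nu_{\partial\Omega}(x)$, which has unit length for $\mathcal{H}^{n-1}$-a.e. $x\in\partial\Omega$, this yields $R\equiv 0$ $\mathcal{H}^{n-1}$-a.e. on $\partial\Omega$. Finally, writing $R(x)=Ax+b$ with $A^{\mathsf{T}}=-A$, the vanishing set $\{R=0\}$ is contained in an affine subspace of $\R^{n}$ unless $R\equiv 0$; since $\Omega$ is convex and Lipschitz with nonempty interior, $\partial\Omega$ is affinely spanning $\R^{n}$, and therefore $A=0$ and $b=0$, i.e.\ $R\equiv 0$. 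Hence $v=u$, as claimed.

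The main obstacle is essentially organisational rather than analytical: one has to keep the three pieces of \eqref{eq:relaxed} separated carefully and use the correct hypothesis ($\trace(u-u_{0})=0$) to force the boundary integral involving $R$ to vanish. The algebraic step $R\odot\nu_{\partial\Omega}=0\Rightarrow R=0$ on $\partial\Omega$, and the subsequent rigidity passage from $\partial\Omega$ to all of $\Omega$, are straightforward once one realises that the linear growth lower bound in \eqref{eq:lg} transfers to a genuine coercive lower bound $f^{\infty}(\xi)\geq c_{0}|\xi|$.
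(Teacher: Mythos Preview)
Your proof is correct and takes a somewhat different route than the paper's. Both arguments reduce to showing that the boundary penalty $\int_{\partial\Omega} f^{\infty}(-R\odot\nu_{\partial\Omega})\,\dif\mathcal{H}^{n-1}$ can vanish only when $R\equiv 0$, but the mechanisms differ. The paper argues by contradiction on the unit ball: assuming $R(z)\odot z=0$ for all $z\in\partial\ball$, it tests with the standard basis vectors $e_{k}$ and reads off a system of linear constraints on the entries of $A$ and $b$ that force them to vanish; the general convex case is then sketched via the existence of $n$ boundary points with linearly independent outer normals. Your approach is more direct: from the coercivity $f^{\infty}(\xi)\geq c_{0}|\xi|$ you obtain $R\odot\nu_{\partial\Omega}=0$ $\mathcal{H}^{n-1}$-a.e., the identity $|a\odot b|^{2}=\tfrac{1}{2}(|a|^{2}|b|^{2}+\langle a,b\rangle^{2})$ then gives $R=0$ $\mathcal{H}^{n-1}$-a.e. on $\partial\Omega$, hence everywhere on $\partial\Omega$ by continuity of $R$, and the affine-spanning observation finishes. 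This is cleaner and in fact does not use convexity at all: for any open bounded Lipschitz $\Omega$ the boundary $\partial\Omega$ affinely spans $\R^{n}$, and if $A\neq 0$ then (being skew) $\operatorname{rank}A\geq 2$, so $\{R=0\}$ has dimension at most $n-2$ and cannot contain $\partial\Omega$. The paper's matrix computation, by contrast, ties the argument to an explicit choice of normals and so leans on the geometry of $\Omega$.
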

\begin{proof}
Let $R\in\mathcal{R}(\Omega)\setminus\{0\}$ be an arbitrary non--zero rigid deformation and denote $\overline{R}$ its continuous extension to $\overline{\Omega}$. Then we have 
\begin{align}\label{eq:uniquenessestimate}
\overline{\mathfrak{F}}[u+R]=\overline{\mathfrak{F}}[u]+\int_{\partial\Omega}f^{\infty}\left(-\overline{R}\odot\nu_{\partial\Omega}\right)\dif\mathcal{H}^{n-1}
\end{align}
because $\trace(u-u_{0})=0$ $\mathcal{H}^{n-1}$--a.e. on $\partial\Omega$. Since the mapping $T\colon\partial\Omega\to\R_{\sym}^{n\times n}$ given by $T(x):=-\overline{R}\odot\nu_{\partial\Omega}$ for $x\in\partial\Omega$ is continuous and $f^{\infty}\colon\R_{\sym}^{n\times n}\to\R_{\geq 0}$ is continuous too, it suffices to show that there exists $z\in\partial\Omega$ such that $|\overline{R}(z)\odot\nu_{\partial\Omega}(z)|>0$. Indeed, in this case we conclude by homogeneity of $f^{\infty}$ and positivity of $f$ that the boundary integral on the right side of \eqref{eq:uniquenessestimate} is strictly positive so that $u+R$ is not a minimiser of $\overline{\mathfrak{F}}$ over $\bd(\Omega)$. The proof is the concluded by Proposition \ref{prop:equivalencegenmins} which provides the required characterisation of generalised minima in terms of $\overline{\mathfrak{F}}$. For simplicity, we shall argue for the unit ball $\Omega=\ball$ and only sketch the respective generalisation to arbitrary open Lipschitz domains $\Omega$ below. Write $\overline{R}(z)=Az+b$. If $|\overline{R}(z)\odot\nu_{\partial\!\ball}(z)|=0$ for all $z\in\partial\!\ball$, then $Az\odot\nu_{\partial\!\ball}(z)=-b\odot\nu_{\partial\!\ball}(z)$ for all $z\in\partial\!\ball$. Since $\nu_{\partial\!\ball}(z)=z$ for any $z\in\partial\!\ball$, this particularly implies $Ae_{k}\odot e_{k}=-b\odot e_{k}$ for all $k=1,...,n$. These identities imply 
\begin{align*}
Ae_{k}\odot e_{k}=\left(
  \begin{array}{ccccccc} 
   0& \hdots  & a_{1k} & 0 & \hdots \\
   0 & \hdots  & \vdots & 0 & \hdots \\
   a_{1k} & \hdots  & a_{kk}  & \hdots & a_{nk} \\
   0& \hdots  & \vdots & 0 & \hdots \\   
   0& \hdots  & a_{nk} & 0 & \hdots \\   
  \end{array}
  \right) = - \left(\begin{array}{ccccccc} 
   0& \hdots  & b_{1} & 0 & \hdots \\
   0 & \hdots  & \vdots & 0 & \hdots \\
   b_{1} & \hdots  & b_{k}  & \hdots & b_{n} \\
   0& \hdots  & \vdots & 0 & \hdots \\   
   0& \hdots  & b_{n} & 0 & \hdots \\   
  \end{array}
  \right)=-b\odot e_{k}. 
\end{align*}
and hence $a_{jk}=-b_{j}$ for all $j,k=1,...,n$. In consequence, $a_{jj}=-b_{j}$ for all $j=1,...,n$, but by scew--symmetry of $A$, $a_{jj}=0$ and thus $b_{j}=0$ for all $j=1,...,n$. This further implies $a_{jk}=0$ for all $j,k=1,...,n$ and thus $\overline{R}\equiv 0$. If $\Omega$ is not a ball, then one may argue similarly, now using the fact that for any open, bounded and convex Lipschitz subset $\Omega$ of $\R^{n}$ there exist linearly independent $z_{1},...,z_{n}\in\partial\Omega$ such that $\nu_{\partial\Omega}(z_{1}),...,\nu_{\partial\Omega}(z_{n})$ are linearly independent too. The details are left to the interested reader.
\end{proof}
The previous lemma is an adaptation of \cite[Lem.~5.5]{BS1} to the symmetric gradient situation. Finally, the second possible source of non-uniqueness is given by the boundary behaviour of generalised minima. This is in the spirit of \textsc{Santi}'s example \cite{Santi} which has been revisited and adapted to the vectorial case by \textsc{Beck \& Schmidt} (cf.~\cite[Thm.~1.17]{BS1}). As such, we believe that is possible by a similar adapation as has been given in Proposition~\ref{prop:uniqueness1} above to generalise \cite[Thm.~1.16]{BS1} to the symmetric gradient situation. More precisely, we conjecture that if $f\colon\R_{\sym}^{n\times n}\to\R$ is a convex integrand with \eqref{eq:lg} such that for every $\eta\in\R^{n}\setminus\{0\}$, $f_{\eta}\colon\xi\mapsto f^{\infty}(\eta\odot \xi)$ is a strictly convex norm\footnote{in the sense that if $f_{\eta}(\xi_{1})=f_{\eta}(\xi_{2})=f_{\eta}(\lambda \xi_{1}+(1-\lambda)\xi_{2})$ for $\xi_{1},\xi_{2}\in\R^{n}$ and $0<\lambda<1$, then $\xi_{1}=\xi_{2}$.} and if generalised minima are unique modulo rigid deformations, then the set of all of generalised minima can be written as 
\begin{align*}
\gm(\mathfrak{F};u_{0})= \big\{u+\lambda R\colon\;-1\leq \lambda \leq 1 \big\}
\end{align*}
for some fixed $u\in\gm(\mathfrak{F};u_{0})$ and $R\in\mathcal{R}(\Omega)$. However, the verification of this is beyond the scope of this paper and shall be addressed in a future work.

\section{Appendix}\label{sec:appendix}
\subsection{Extensions of Theorems~\ref{thm:main0} and \ref{thm:main1} to nonautonomous problems} Let us now briefly comment on the situation where $f$ has additional $x$-dependence. If $f\in\hold^{2}(\overline{\Omega}\times\R_{\sym}^{n\times n})$ is an integrand that satisfies essentially the assumptions of \cite[Ass.~4.22]{Bild1}, that is, $f$ satisfies \eqref{eq:lg} uniformly in $x$ together with
\begin{align}
\begin{cases}
\sup_{x\in\overline{\Omega}}\sup_{\xi\in\R_{\sym}^{n\times n}}\max\{|\D_{\xi}f(x,\xi)|,|\D_{x}^{2}\D_{\xi}f(x,\xi)|,|\D_{x}\D_{\xi}f(x,\xi)|\}<\infty,\\ 
\lambda\frac{|\xi|^{2}}{(1+|z|^{2})^{\mu/2}}\leq \langle \D_{\xi}^{2}f(z)\xi,\xi\rangle \leq \Lambda \frac{|\xi|^{2}}{(1+|z|^{2})^{1/2}},\\
|\langle \D_{x}\D_{\xi}^{2}f(x,\xi)\eta,\eta'\rangle|\leq C(|\langle\D_{\xi}^{2}f(x,\xi)\eta,\eta'\rangle| + |\eta|\,|\eta'|/(1+|\xi|^{2}))
\end{cases}
\end{align}
for all $x\in\overline{\Omega},\eta,\eta',\xi,z\in\R_{\sym}^{n\times n}$, then the results of this paper carry over in a straightforward manner to the situation of interest; in fact, as we work with finite differences, these assumptions can even be weakened, but this is left to the interested reader; also see the discussion in \cite[App.~C]{BS1}. If the smoothness of the $x$-dependence is diminished, a merger of the arguments outlined in this work with the foundational work of \textsc{Mingione} (cp.~\cite{Min1,Min2,KM1}) leads to the correspondingly modified theorems. 
\subsection{Proofs of auxiliary results}
In this section, we provide the proofs of minor auxiliary results used in the main body of the paper. We begin with the 
\begin{proof}[Proof of Proposition~\ref{prop:cheapembedding}]
Let $u\in\hold_{c}^{\infty}(\R^{n};\R^{n})$ and fix $s<t<1$. By the \textsc{Smith} representation formula \eqref{eq:smith}, $\Phi\colon\hold_{c}^{\infty}(\R^{n};\R_{\sym}^{n\times n})\ni\sg(u)\mapsto u\in \hold_{c}^{\infty}(\R^{n};\R^{n})$  is a Riesz potential operator of order one acting by convolution. Hence, a routine estimation yields for $x,y\in\R^{n}$, $x\neq y$ and $0<t<1$
\begin{align}\label{eq:fractional}
\begin{split}
|u(x)-u(y)| & = |\Phi(\bm{\varepsilon}(u))(x)-\Phi(\bm{\varepsilon}(u))(y)| \\ 
& \leq C |x-y|^{t}\int_{\R^{n}} |\bm{\varepsilon}(u)(z)|\left(\frac{1}{|z-x|^{n-1+t}}+\frac{1}{|y-z|^{n-1+t}}\right)\dif z.
\end{split}
\end{align}
Fixing a ball $\ball=\ball(z,r)\subset\R^{n}$, dividing \eqref{eq:fractional} by $|x-y|^{n+s}$ and integrating with respect to $x,y\in\ball$ consequently yields by symmetry for a suitable cut-off function $\eta\in\hold_{c}^{\infty}(\R^{n};[0,1])$
\begin{align*}
\iint_{\ball\times\ball}\frac{|u(x)-u(y)|}{|x-y|^{n+s}}\dif\,(x,y) & \leq C\iint_{\ball\times\ball}\underbrace{\int_{\R^{n}}\frac{|\bm{\varepsilon}(u)(z)|}{|z-x|^{n-1+s}}\dif z}_{:=F(x)}\frac{\dif\,(x,y)}{|x-y|^{n+s-t}}\\
& \leq C \int_{\ball'}\Big(\int_{\R^{n}}\frac{\eta(x)F(x)}{|x-y|^{n+s-t}}\dif x\Big)\dif y, 
\end{align*}
where $\ball'=\ball(x,R)$ for some suitable $0<r<R<\infty$. Now we use Young's convolution inequality twice and employ the fact that $x\mapsto |x|^{-n-s+t}$ and $x\mapsto |x|^{-n+1-s}$ are integrable over any ball $\ball(x,R)$ as long as $R<\infty$ to conclude. The rest follows by standard localisation and approximation arguments which we omit here. 
\end{proof}
\begin{proof}[Proof of Lemma~\ref{lem:valpha}]
By \cite[Lemma 2.2]{AcerbiFusco}, for every $-\frac{1}{2} < \gamma <0$ and $\mu\geq 0$ there exists a constant $c=c(M)>0$ such that for all $\xi,\eta\in\R^{M}$ there holds 
\begin{align*}
(2\gamma+1)|\xi-\eta|\leq \frac{|(\mu^{2}+|\xi|^{2})^{\gamma}\xi-(\mu^{2}+|\eta|^{2})^{\gamma}\eta|}{(\mu^{2}+|\xi|^{2}+|\eta|^{2})^{\gamma}}\leq \frac{c(M)}{2\gamma+1}|\xi-\eta|. 
\end{align*}
Applying this with $\mu=1$ and $\gamma=(1-\alpha)/2$ yields the claim as $-\frac{1}{2}<\gamma<0$ if and only if $1<\alpha<2$. 

Now let $\xi\in\R^{M}$ with $|\xi|\geq 1$ and let $1<\alpha<2$. Then $(1-\alpha)/2<0$. Hence, since $t\mapsto t^{(1-\alpha)/2}$ is monotonically decreasing on $\R_{>0}$, 
\begin{align*}
|\xi|>1 & \Rightarrow |\xi|^{2}>1 \Rightarrow 2|\xi|^{2}>1+|\xi|^{2} \Rightarrow 2^{\frac{1-\alpha}{2}}|\xi|^{1-\alpha}\leq (1+|\xi|^{2})^{\frac{1-\alpha}{2}}\\ & \Rightarrow |\xi|^{2-\alpha}\leq 2^{\frac{\alpha-1}{2}}|V_{\alpha}(\xi)| \stackrel{1<\alpha<2}{\leq}\sqrt{2}|V_{\alpha}(\xi)|. 
\end{align*}
Since $1<\alpha<2$ and $|\xi|\geq 1$, we have $|\xi|^{2-\alpha}\leq |\xi|$ and thus $\min\{|\xi|,|\xi|^{2-\alpha}\}\leq \sqrt{2}|V_{\alpha}(\xi)|$ in this case. Now, if $|\xi|<1$, then 
\begin{align*}
2^{\frac{1-\alpha}{2}}\leq (1+|\xi|^{2})^{\frac{1-\alpha}{2}}\Rightarrow 2^{\frac{1-\alpha}{2}}|\xi|\leq |V_{\alpha}(\xi)|\Rightarrow |\xi|\leq \sqrt{2}|V_{\alpha}(\xi)|, 
\end{align*}
and hence we see because of $|\xi|\leq |\xi|^{2-\alpha}$ in this regime that $\min\{|\xi|,|\xi|^{2-\alpha}\}\leq \sqrt{2}|V_{\alpha}(\xi)|$ holds, too. Hence $\min\{|\xi|,|\xi|^{2-\alpha}\}\leq \sqrt{2}|V_{\alpha}(\xi)|$ for all $\xi\in\R^{M}$. Lastly if the measurable function $u\colon\Omega\to\R^{M}$ is such that $V_{\alpha}(u)\in\lebe^{p}(\Omega;\R^{m})$, then we have 
\begin{align*}
\int_{\Omega}|u|^{(2-\alpha)p}\dif x & = \int_{\Omega\cap\{|u|\leq 1\}}|u|^{(2-\alpha)p}\dif x + \int_{\Omega\cap\{|u|> 1\}}|u|^{(2-\alpha)p}\dif x \\ & \leq \mathscr{L}^{n}(\Omega)+c(p)\int_{\Omega}|V_{\alpha}(u)|^{p}\dif x 
\end{align*}
The proof is complete. 
\end{proof}
We now proceed to the 
\begin{proof}[Proof of Lemma~\ref{lem:dororeduction}]
Fix $x_{0}\in\R^{n}$ and let $\widetilde{Q}$ be an arbitrary cube with $x\in\widetilde{Q}$ and $\mathscr{L}^{n}(Q)=t^{n}$. It is easy to see that there exists $K=K(n)>0$ such that the cube $Q=Q(x_{0},Kt)$ (which has center $x_{0}$) contains $\widetilde{Q}$. We then obtain  
\begin{align*}
\frac{1}{t^{\alpha}}\dashint_{\widetilde{Q}}|u-(u)_{\widetilde{Q}}|\dif y & = \frac{1}{|\widetilde{Q}|^{1+\frac{\alpha}{n}}}\int_{\widetilde{Q}}|u-(u)_{\widetilde{Q}}|\dif x \leq \frac{K^{n+\alpha}}{|Q|^{1+\frac{\alpha}{n}}}\int_{Q}|u-(u)_{\widetilde{Q}}|\dif x \\
& \stackrel{\text{Jensen}}{\leq}\frac{K^{n+\alpha}}{|Q|^{1+\frac{\alpha}{n}}}\int_{Q}\dashint_{\widetilde{Q}}|u(x)-u(y)|\dif y\dif x \\
& \leq \frac{K^{n+\alpha}}{|Q|^{1+\frac{\alpha}{n}}}\frac{K^{n}}{|Q|}\int_{Q}\int_{\widetilde{Q}}|u(x)-u(y)|\dif y\dif x \\
& \leq \frac{K^{2n+\alpha}}{|Q|^{1+\frac{\alpha}{n}}}\dashint_{Q}\int_{Q}|u(x)-u(y)|\dif y\dif x \\
& \leq \frac{K^{2n+\alpha}}{|Q|^{1+\frac{\alpha}{n}}}\dashint_{Q}\int_{Q}|u(x)-(u)_{Q}+(u)_{Q}-u(y)|\dif y\dif x\\
& \leq \frac{2K^{2n+\alpha}}{|Q|^{1+\frac{\alpha}{n}}}\int_{Q}|u(x)-(u)_{Q}|\dif x \leq 2K^{2n+\alpha}(\mathcal{M}_{\alpha}^{\#}u)(x_{0}). 
\end{align*}
We may therefore put $C(n,\alpha):=2K^{2n+\alpha}$. The proof is complete. 
\end{proof}
\subsection{Calder\'{o}n spaces}
Other than the frequently function spaces $\bv$, $\bv$ or $\besov_{p,q}^{s}$, the Calder\'{o}n spaces only appear at a single point in the main text and so their definition is given in this part of the appendix. Let $\alpha>0$ and, for a given map $u\in\lebe_{\locc}^{1}(\R^{n};\R^{N})$ define its fractional sharp maximal operator $\mathcal{M}_{\alpha}^{\#}u$ by \eqref{eq:sharpfrac}.
\begin{definition}[Calder\'{o}n Spaces on $\R^{n}$, {\cite[Chpt.~6]{DS}}]
Let $1\leq p \leq\infty$ and $\alpha>0$. The \emph{Calder\'{o}n space} $\mathscr{C}^{\alpha,p}(\R^{n};\R^{m})$ is defined by
\begin{align*}
& \mathscr{C}^{\alpha,p}(\R^{n};\R^{m}):=\{v\in\lebe^{p}(\R^{n};\R^{m})\colon\;\mathcal{M}_{\alpha}^{\#}v\in\lebe^{p}(\R^{n})\},
\end{align*}
and its elements are normed by $\|u\|_{\mathscr{C}^{s,p}}:=\|u\|_{\lebe^{p}(\R^{n};\R^{m})}+\|\mathcal{M}_{\alpha}^{\#}u\|_{\lebe^{p}(\R^{n})}$.  
\end{definition}
We finally link the Besov spaces to the Calder\'{o}n spaces: \index{Calder\'{o}n spaces}
\begin{lemma}[\cite{DS}, Theorems 7.1 and 7.5]\label{lem:devoresharpleylemma}
Let $\alpha>0$. Then for any $1\leq p < \infty$ there holds 
\begin{align*}
\besov_{p,p}^{\alpha}(\R^{n})\hookrightarrow \mathscr{C}^{\alpha,p}(\R^{n})\hookrightarrow \besov_{p,\infty}^{\alpha}(\R^{n}). 
\end{align*}
\end{lemma}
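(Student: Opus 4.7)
The plan is to establish each of the two embeddings by direct manipulation of the defining quantities, without appealing to any Littlewood--Paley machinery. Throughout, the only fine tools I would need are Minkowski's integral inequality, Fubini, and Lebesgue differentiation.

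For the first inclusion $\besov_{p,p}^{\alpha}\hookrightarrow \mathscr{C}^{\alpha,p}$, I would fix $u\in\besov_{p,p}^{\alpha}(\R^{n})$ and a cube $Q$ centered at $x\in\R^{n}$ with side length $t=\ell(Q)$, and first observe via Jensen that
\begin{align*}
\frac{1}{\ell(Q)^{n+\alpha}}\int_{Q}|u-(u)_{Q}|\dif y \;\leq\; \frac{1}{\ell(Q)^{2n+\alpha}}\iint_{Q\times Q}|u(y)-u(z)|\dif y\dif z.
\end{align*}
Changing variables $h:=z-y$ (with $|h|\leq \sqrt{n}\,t$ on each cube) rewrites the inner integral as an averaged translation increment, and the fractional sharp maximal operator gets bounded pointwise by a suitable averaged difference quotient. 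Taking the $\lebe^{p}$-norm in $x$ and applying Minkowski's integral inequality would then yield $\|\mathcal{M}_{\alpha}^{\#}u\|_{\lebe^{p}}\leq C[u]_{\besov_{p,p}^{\alpha}}$ after recognising the Besov seminorm in its difference characterisation with $p=q$.

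For the second inclusion $\mathscr{C}^{\alpha,p}\hookrightarrow \besov_{p,\infty}^{\alpha}$, I would employ a telescoping dyadic chain argument. For any Lebesgue point $x$ of $u$ and a cube $Q(x,r)$ of side $r$ centered at $x$, the dyadic chain $Q_{k}:=Q(x,2^{-k}r)$ shrinks to $x$, so iterating the elementary estimate $|(u)_{Q_{k+1}}-(u)_{Q_{k}}|\lesssim \dashint_{Q_{k}}|u-(u)_{Q_{k}}|\dif y$ gives
\begin{align*}
|u(x)-(u)_{Q(x,r)}| \;\leq\; \sum_{k=0}^{\infty}|(u)_{Q_{k+1}}-(u)_{Q_{k}}| \;\leq\; C(\mathcal{M}_{\alpha}^{\#}u)(x)\sum_{k=0}^{\infty}(2^{-k}r)^{\alpha},
\end{align*}
and the geometric series delivers $|u(x)-(u)_{Q(x,r)}|\leq Cr^{\alpha}(\mathcal{M}_{\alpha}^{\#}u)(x)$. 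For $h\in\R^{n}\setminus\{0\}$, inserting a common cube $Q$ of side comparable to $|h|$ containing both $x$ and $x+h$ and applying this estimate at each endpoint yields $|u(x+h)-u(x)|\leq C|h|^{\alpha}\bigl((\mathcal{M}_{\alpha}^{\#}u)(x)+(\mathcal{M}_{\alpha}^{\#}u)(x+h)\bigr)$ for $\mathscr{L}^{n}$-a.e. $x$. Taking the $\lebe^{p}$-norm in $x$, dividing by $|h|^{\alpha}$, and using translation invariance of $\lebe^{p}$ establishes $[u]_{\besov_{p,\infty}^{\alpha}}\leq C\|\mathcal{M}_{\alpha}^{\#}u\|_{\lebe^{p}}$, and combining with the trivial bound $\|u\|_{\lebe^{p}}\leq\|u\|_{\mathscr{C}^{\alpha,p}}$ completes the embedding.

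The main technical obstacle is the uniform control of the constants: the geometric series $\sum_{k\geq 0}2^{-k\alpha}=(1-2^{-\alpha})^{-1}$ forces a genuine use of $\alpha>0$, and in both arguments the choice of cubes must be made with care so that the \emph{centered} nature of $\mathcal{M}_{\alpha}^{\#}$ in \eqref{eq:sharpfrac} is respected -- any cube entering the estimates at the point $x$ must either be centered at $x$ or contained in a dilate of such a cube with a controlled factor. Apart from this bookkeeping, the remaining manipulations are routine applications of Fubini, Minkowski's integral inequality, and Lebesgue differentiation.
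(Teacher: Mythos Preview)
The paper does not supply its own proof of this lemma; it simply records the statement and cites \textsc{DeVore--Sharpley}~\cite{DS}, Theorems~7.1 and~7.5. There is therefore no argument in the paper to compare against, and your proposal has to be judged on its own.

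Your sketch for the second embedding $\mathscr{C}^{\alpha,p}\hookrightarrow\besov_{p,\infty}^{\alpha}$ is correct and is exactly the standard telescoping--chain argument; the bookkeeping with centered cubes that you flag is routine.

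For the first embedding $\besov_{p,p}^{\alpha}\hookrightarrow\mathscr{C}^{\alpha,p}$, however, there is a genuine gap. The difficulty is the \emph{supremum over scales} in the definition of $\mathcal{M}_{\alpha}^{\#}u(x)$, which you do not address. Carrying out your steps, one obtains for each cube $Q=Q(x,t)$ centered at $x$ (using $\dashint_{Q}|u-(u)_{Q}|\leq 2\dashint_{Q}|u(y)-u(x)|\dif y$ and the change of variables $h=y-x$)
\begin{align*}
\mathcal{M}_{\alpha}^{\#}u(x)\;\leq\;C\sup_{t>0}\frac{1}{t^{n+\alpha}}\int_{|h|\leq c t}|\tau_{h}u(x)|\dif h\;\leq\;C\int_{\R^{n}}\frac{|\tau_{h}u(x)|}{|h|^{n+\alpha}}\dif h,
\end{align*}
the last step being the only elementary way to remove the supremum. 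Taking the $\lebe^{p}$-norm and applying Minkowski's integral inequality now yields
\begin{align*}
\|\mathcal{M}_{\alpha}^{\#}u\|_{\lebe^{p}}\;\leq\;C\int_{\R^{n}}\frac{\|\tau_{h}u\|_{\lebe^{p}}}{|h|^{n+\alpha}}\dif h\;=\;C\,[u]_{\besov_{p,1}^{\alpha}},
\end{align*}
which is the $\besov_{p,1}^{\alpha}$-seminorm, not the $\besov_{p,p}^{\alpha}$-seminorm. Since $\besov_{p,1}^{\alpha}\subsetneq\besov_{p,p}^{\alpha}$ for every $p>1$, this is strictly weaker than what is asserted (it does give the lemma for $p=1$). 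Getting the sharp secondary index $q=p$ requires an additional mechanism---in effect one has to recognise that $\mathscr{C}^{\alpha,p}$ sits at the $q=\infty$ end of a Triebel--Lizorkin--type scale and exploit the monotonicity in $q$---and this is precisely where the substance of the proofs in~\cite{DS} lies; it is not reducible to Fubini, Minkowski and Lebesgue differentiation alone.
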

As an important consequence of the preceding lemma, we record the embeddings 
\begin{align}\label{eq:calderonbesovembedding}
\sobo^{\alpha,p}(\R^{n})\hookrightarrow \mathscr{C}^{\alpha,p}(\R^{n})\hookrightarrow \besov_{\infty}^{\alpha,p}(\R^{n}), \qquad 1<p<\infty,\;\alpha>0.
\end{align}

\subsection{Relaxation} \label{sec:relaxation}
As mentioned in the introduction, we now give justification for some results used in the main part of the paper. The primary aim of this section is to explain formula \eqref{eq:relaxed} and the existence of generalised minima. We thus recap the requisite foundational theory of functions of measures as exposed, e.g., in \cite{DT,Anz}.
\subsubsection{Convex Functions of Measures}\label{sec:convexfunctions of measures}
Given $m\in\mathbb{N}$, let $f\colon\R^{m}\to\R_{\geq 0}\R$ be a convex function of linear growth, i.e., $f$ satisfies \eqref{eq:lg} with the obvious modifications. In this situation, it can be shown that $f^{\infty}$ defined by \eqref{eq:recession} is well--defined, convex and positively $1$--homogeneous. Let $\mu$ be an $\R^{m}$--valued Radon measure of finite total variation on an open and bounded set $\Omega\subset\R^{n}$. We denote 
\begin{align*}
\mu=\mu^{a}+\mu^{s}=\frac{\dif\mu}{\dif\mathscr{L}^{n}}\mathscr{L}^{n}+\frac{\dif\mu}{\dif|\mu^{s}|}|\mu^{s}|
\end{align*}
its Radon--Nikod\'{y}m decomposition into its absolutely continuous and singular parts $\mu^{a},\mu^{s}$ with respect to Lebesgue measure, where $|\mu^{s}|$ denotes the total variation measure of $\mu^{s}$. We then define a new Radon measure $f[\mu]$ by 
\begin{align}\label{eq:newRadonmeasure}
f[\mu](A):=\int_{A}f\left(\frac{\dif\mu}{\dif\mathscr{L}^{n}}\right)\dif\mathscr{L}^{n}+\int_{A}f^{\infty}\left(\frac{\dif\mu}{\dif |\mu^{s}|}\right)\dif |\mu^{s}|,\qquad A\in\mathscr{B}(\Omega), 
\end{align}
where $\mathscr{B}(\Omega)$ denotes the Borel--$\sigma$--algebra on $\Omega$. We note that, by positive $1$--homogeneity of $f^{\infty}$, this gives rise to a well-defined measure indeed. Linking this to the area functional as required for the definition of area-strict convergence, for a given map $u\in\bd(\Omega)$, we have with $f:=\sqrt{1+|\cdot|^{2}}$ that $\sqrt{1+|\E u|^{2}}(\Omega):=f[\E u](\Omega)$. 

We turn to formula \eqref{eq:relaxed} for the relaxed functional as given for $\bv$--functions in \cite{GMS1} and find by a straightforward applications of the results of \textsc{Goffman \& Serrin} \cite{GoffSerrin} that, given an open and bounded Lipschitz subset $\Omega$ of $\R^{n}$ together with a Dirichlet datum $u_{0}\in\ld(\Omega)$, we have
\begin{align}\label{eq:relaxinequality}
\overline{\mathfrak{F}}_{u_{0}}[u] = \inf\left\{\liminf_{k\to\infty}\mathfrak{F}[u_{k}]\colon\;\begin{array}{c} (u_{k})\subset\mathscr{D}_{u_{0}}:=u_{0}+\ld_{0}(\Omega) \\ u_{k}\to u\;\text{in}\;\lebe^{1}(\Omega;\R^{n})\end{array}\right\}.
\end{align}
We pick a ball $\ball=\ball(z,R)$ with $\Omega\Subset\ball$. By surjectivity of the trace operator $\trace\colon\ld(U)\to\lebe_{\mathcal{H}^{n-1}}^{1}(\partial U;\R^{n})$ on bounded Lipschitz subsets $U$ of $\R^{n}$ (see Section \ref{sec:bd}) that there exists $v_{0}\in\ld(\ball\setminus\overline{\Omega})$ such that $\trace(v_{0})|_{\partial\ball}=0$ and $\trace(v_{0})|_{\partial\Omega}=\trace(u_{0})|_{\partial\Omega}$ $\mathcal{H}^{n-1}$--a.e. on $\partial\!\ball$ or $\partial\Omega$, respectively. Given $u\in\bd(\Omega)$, we put 
\begin{align}\label{eq:extension}
\widetilde{u}(x):=\begin{cases} 
u(x)&\;\text{for}\;x\in\Omega,\\
v_{0}(x)&\;\text{for}\;x\in\ball\setminus\overline{\Omega}.
\end{cases}
\end{align}
Then there holds $\widetilde{u}\in\bd(\ball)$, and by the interior trace theorem as recalled in Section \ref{sec:bd} we have 
\begin{align*}
\E \widetilde{u} & = \E^{a}\widetilde{u}+\E^{s}\widetilde{u} = \E^{a}u\mres\Omega + \E^{s}u\mres\Omega + \E^{s}\widetilde{u}\mres\partial\Omega + \E^{a}\widetilde{u}\mres(\ball\setminus\overline{\Omega})\\
& = \mathscr{E}u\mathscr{L}^{n}\mres\Omega+\frac{\dif \E u}{\dif |\E^{s}u|}|\E^{s}u|+\trace(u-v_{0})\odot\nu_{\partial\Omega}\mathcal{H}^{n-1}\mres\partial\Omega + \mathscr{E}v_{0}\mathscr{L}^{n}\mres(\ball\setminus\overline{\Omega}). 
\end{align*}
We insert this expression for $\mu=\E\widetilde{u}$ with $A=\ball$ into \eqref{eq:newRadonmeasure} and obtain 
\begin{align}\label{eq:derivationofsplitting}
\begin{split}
f[\E\widetilde{u}](\ball) & = \int_{\Omega}f(\mathscr{E}u)\dif\mathscr{L}^{n}+\int_{\Omega}f\left(\frac{\dif \E u}{\dif |\E^{s}u|}\right)\dif |\E^{s}u| \\ & + \int_{\partial\Omega}f^{\infty}\left(\trace(u-v_{0})\odot\nu_{\partial\Omega}\right)\dif\mathcal{H}^{n-1} + \int_{\ball\setminus\overline{\Omega}}f(\mathscr{E}v_{0})\dif\mathscr{L}^{n}. 
\end{split}
\end{align}
If we then aim for minimising $f[\E\widetilde{u}](\ball)$ over all $u\in\bd(\Omega)$, we see by constancy of the very last term of the preceding expression that it does not affect the minimiser $v\in\bd(\Omega)$, and thus a function $v\in\bd(\Omega)$ minimises $f[\E\widetilde{u}](\ball)$ if and only if it minimises the relaxed functional given by \eqref{eq:relaxed}. 

We conclude this section by recalling two results due to Reshetnyak concerning the (lower semi--)continuity of convex functions of measures in the version as given in \cite{BS1}.
\begin{proposition}[{\textsc{Reshetnyak}}, \cite{Reshetnyak}]\label{prop:Reshetnyak}
Let $m\in\mathbb{N}$, $\Omega\subset\R^{n}$ open and let $(\mu_{k})$ be a sequence of $\R^{m}$--valued Radon measures of finite total variation which converges to a $\R^{m}$--valued Radon measure of finite total variation $\mu$ on $\Omega$ in the weak*--sense. Moreover, assume that all measures $\mu_{k}$ and $\mu$ take values in some closed convex cone $K\subset\R^{m}$. Then the following holds: 
\begin{enumerate}
\item \emph{Lower Semicontinuity.} If $\widetilde{f}\colon K \to [0,\infty]$ is a lower semicontinuous function, then there holds 
\begin{align*}
\int_{\Omega}\widetilde{f}\left(\frac{\dif \mu}{\dif |\mu|}\right)\dif |\mu| \leq \liminf_{k\to\infty}\int_{\Omega}\widetilde{f}\left(\frac{\dif \mu_{k}}{\dif |\mu_{k}|}\right)\dif |\mu_{k}|.
\end{align*}
\item If $\mu_{k}\to \mu$ strictly\footnote{In the sense that $\mu_{k}\stackrel{*}{\rightharpoonup}\mu$ and $|\mu_{k}|(\Omega)\to|\mu|(\Omega)$ as $k\to\infty$.} as $k\to\infty$ and $\widetilde{f}\colon K \to [0,\infty)$ is a continuous and $1$--homogeneous function, then there holds
\begin{align*}
\int_{\Omega}\widetilde{f}\left(\frac{\dif \mu}{\dif |\mu|}\right)\dif |\mu| = \lim_{k\to\infty}\int_{\Omega}\widetilde{f}\left(\frac{\dif \mu_{k}}{\dif |\mu_{k}|}\right)\dif |\mu_{k}|.
\end{align*}
\end{enumerate}
\end{proposition}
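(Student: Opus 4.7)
My plan is to follow the classical proof of Reshetnyak: first establish part~(a) by reduction to the bounded continuous case, then derive part~(b) from (a) applied to two auxiliary integrands in conjunction with the strict-convergence hypothesis.

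\textbf{Part (a).} Since $\widetilde{f}\colon K\to[0,\infty]$ is non-negative and lower semicontinuous on a closed subset of $\R^m$, it can be written as the pointwise increasing supremum of a sequence $(\widetilde{f}_j)_{j\in\mathbb{N}}$ of bounded continuous functions on $K$. By monotone convergence,
\begin{align*}
\int_\Omega \widetilde{f}\!\left(\frac{\dif\mu}{\dif|\mu|}\right)\dif|\mu| = \sup_{j\in\mathbb{N}} \int_\Omega \widetilde{f}_{j}\!\left(\frac{\dif\mu}{\dif|\mu|}\right)\dif|\mu|,
\end{align*}
and a supremum of lower semicontinuous functionals is itself lower semicontinuous, so it suffices to treat bounded continuous $\widetilde{f}$. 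For such $\widetilde{f}$ I would extend it positively $1$-homogeneously to $g\colon K\to[0,\infty)$ by setting $g(y):=|y|\widetilde{f}(y/|y|)$ for $y\neq 0$ and $g(0):=0$; boundedness of $\widetilde{f}$ on the compact unit sphere $K\cap S^{m-1}$ together with the cone structure of $K$ makes $g$ continuous. I would then invoke the standard duality identity
\begin{align*}
\int_\Omega g\!\left(\frac{\dif\mu}{\dif|\mu|}\right)\dif|\mu| = \sup\!\left\{ \int_\Omega \langle \varphi, \dif\mu \rangle : \varphi\in\hold_c(\Omega;\R^m),\;\langle \varphi(x), y \rangle \leq g(y)\;\forall x\in\Omega,\,y\in K\right\},
\end{align*}
valid whenever $g$ is continuous, positively $1$-homogeneous, and \emph{convex} on $K$. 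Since each $\mu\mapsto\int_\Omega\langle\varphi,\dif\mu\rangle$ is weak$^*$-continuous, the right-hand side is a supremum of weak$^*$-continuous functionals, hence weak$^*$-lower semicontinuous, which closes the convex case.

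\textbf{Part (b).} For continuous $1$-homogeneous $\widetilde{f}\geq 0$, I would choose $C>0$ large enough so that $\widetilde{g}(y):=C|y|-\widetilde{f}(y)\geq 0$ on $K$; this is possible because $\widetilde{f}$ is bounded on the compact unit sphere by continuity and extends $1$-homogeneously. Both $\widetilde{f}$ and $\widetilde{g}$ are then continuous, $1$-homogeneous, and non-negative, so part~(a) applies to each. Using that $\dif\mu/\dif|\mu|$ lies on $K\cap S^{m-1}$ and the strict-convergence assumption $|\mu_k|(\Omega)\to|\mu|(\Omega)$, the inequality from part~(a) applied to $\widetilde{g}$ reads
\begin{align*}
C|\mu|(\Omega)-\int_\Omega\widetilde{f}\!\left(\frac{\dif\mu}{\dif|\mu|}\right)\dif|\mu| \leq \liminf_{k\to\infty}\!\left(C|\mu_k|(\Omega)-\int_\Omega \widetilde{f}\!\left(\frac{\dif\mu_k}{\dif|\mu_k|}\right)\dif|\mu_k|\right),
\end{align*}
which rearranges to the reverse $\limsup$-inequality; combined with part~(a) applied to $\widetilde{f}$ itself, equality in the limit follows.

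\textbf{Main obstacle.} The delicate point lies in part~(a) when $\widetilde{f}$ is genuinely non-convex: the sup-duality representation above only recovers the convex envelope $g^{**}$ of the $1$-homogeneous extension $g$. A fully general proof therefore requires either a Young-measure / parametrized-measure framework or an approximation of $\dif\mu/\dif|\mu|$ by piecewise constant functions relative to the Radon--Nikod\'ym decomposition combined with a Lusin-type selection argument. In the applications of this proposition within the paper, however, $\widetilde{f}$ is always convex (namely, $f$ itself or its $1$-homogeneous recession $f^{\infty}$), so the clean convex-duality argument suffices for every invocation.
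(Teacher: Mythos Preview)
The paper does not supply a proof of this proposition; it is quoted from \textsc{Reshetnyak}'s original work (in the formulation of \cite{BS1}) and used as a black box. There is therefore no proof in the paper to compare your attempt against.

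On the merits of your argument: your treatment of part~(a) in the \emph{convex} case via the duality representation is standard and correct, and your ``Main obstacle'' paragraph is well taken --- the statement of (a) as printed is in fact missing a convexity hypothesis on (the $1$-homogeneous extension of) $\widetilde{f}$, without which the conclusion fails. A quick counterexample: on $(0,1)$ let $\mu_{k}$ equal $e_{1}\,\mathrm{d}x$ and $e_{2}\,\mathrm{d}x$ on alternating intervals of length $1/(2k)$, so that $\mu_{k}\stackrel{*}{\rightharpoonup}\tfrac{1}{2}(e_{1}+e_{2})\,\mathrm{d}x$; choosing a continuous $\widetilde{f}$ on the sphere with $\widetilde{f}(e_{1})=\widetilde{f}(e_{2})=0$ but $\widetilde{f}\bigl(\tfrac{1}{\sqrt{2}}(e_{1}+e_{2})\bigr)=1$ violates the asserted inequality. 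So your caveat is not merely a technical gap but an actual obstruction, and your remark that the paper only invokes (a) for convex integrands is the right resolution.

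There is, however, a gap in your derivation of part~(b) that you do not flag. You apply (a) to $\widetilde{g}:=C|\cdot|-\widetilde{f}$, but $\widetilde{g}$ is in general \emph{not} convex, even when $\widetilde{f}$ is (e.g.\ take $\widetilde{f}(y)=|y_{1}|$ in $\R^{2}$). Since you have only established (a) under convexity --- and since, as just noted, (a) is false otherwise --- this route to (b) does not go through. Part~(b) is nonetheless true without any convexity assumption on $\widetilde{f}$, but it requires a different argument: either a direct proof via disintegration/blow-up as in \textsc{Spector}~\cite{Spector} (already in the paper's bibliography), or a Young-measure argument. Your claim that ``part~(a) applies to each'' therefore needs either the full non-convex (a), which is unavailable, or an independent proof of~(b).
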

\subsubsection{Generalised Minima: Existence and Characterisations}
We now pass on to the verification of \eqref{eq:relaxinequality} and establish the existence of generalised minima.
\begin{proposition}\label{prop:equivalencegenmins}
Let $\Omega$ be an open and bounded Lipschitz subset of $\R^{n}$. Given a convex integrand $f\colon\R_{\sym}^{n\times n}\to\R$ with \eqref{eq:lg} and a boundary datum $u_{0}\in\ld(\Omega)$, define $\overline{\mathfrak{F}}_{u_{0}}$ by \eqref{eq:relaxed}. Then there exists a generalised minimiser of $\mathfrak{F}$ in the sense of \eqref{eq:GMdef}. 

Moreover, the following are equivalent for $u\in\bd(\Omega)$: 
\begin{enumerate}
\item $u$ is a generalised minimiser in the sense of \eqref{eq:GMdef}. 
\item $u$ is the weak*-limit of an $\mathfrak{F}$-minimising sequence $(u_{k})\subset\mathscr{D}_{u_{0}}(:= u_{0}+\ld_{0}(\Omega))$. 
\item $u$ is the strong $\lebe^{1}$-limit of an $\mathfrak{F}$-minimising sequence $(u_{k})\subset\mathscr{D}_{u_{0}}$.
\end{enumerate}
\end{proposition}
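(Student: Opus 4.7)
The plan is to reformulate the problem on a larger ball via the extension construction \eqref{eq:extension}--\eqref{eq:derivationofsplitting}. Fix $\ball \Supset \Omega$ and $v_{0}\in\ld(\ball\setminus\overline{\Omega})$ as in the text; then for $u\in\bd(\Omega)$ with extension $\widetilde{u}$ we have $\overline{\mathfrak{F}}_{u_{0}}[u] = f[\E \widetilde{u}](\ball) - C_{0}$ with $C_{0} := \int_{\ball\setminus\overline{\Omega}} f(\mathscr{E} v_{0})\dif\mathscr{L}^{n}$ a fixed constant. Thus minimisation of $\overline{\mathfrak{F}}_{u_{0}}$ over $\bd(\Omega)$ reduces to minimisation of $w\mapsto f[\E w](\ball)$ over the affine subset $\mathscr{A}_{v_{0}}:=\{w\in\bd(\ball)\colon w=v_{0}\text{ a.e.\ on }\ball\setminus\overline{\Omega}\}$, on which the boundary integral has been absorbed and the classical Reshetnyak machinery applies cleanly.

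For existence I would apply the direct method: take a minimising sequence $(\widetilde{u}_{k})\subset\mathscr{A}_{v_{0}}$, use the lower bound in \eqref{eq:lg} together with the $\bd$--Poincar\'{e} inequality on $\ball$ (coercivity is not lost since the constraint $w=v_{0}$ on a set of positive measure kills the rigid-deformation kernel) to obtain a uniform $\bd(\ball)$-bound, and extract a weak*-convergent subsequence $\widetilde{u}_{k}\stackrel{*}{\rightharpoonup}\widetilde{u}$ by $\bd$-compactness. The limit minimises by Proposition \ref{prop:Reshetnyak}(a) applied to the continuous, convex, positively $1$-homogeneous map $\widetilde{f}(\lambda,\xi):=\lambda f(\xi/\lambda)$ on the cone $K=\R_{\geq 0}\times\R_{\sym}^{n\times n}$, evaluated at the measures $(\mathscr{L}^{n},\E \widetilde{u}_{k})$. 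Along the way this also yields $\min\overline{\mathfrak{F}}_{u_{0}}[\bd(\Omega)]=\inf\mathfrak{F}[\mathscr{D}_{u_{0}}]$. The same Reshetnyak lower-semicontinuity gives (b)$\Rightarrow$(a), while (c)$\Rightarrow$(b) is immediate because an $L^{1}$-convergent $\mathfrak{F}$-minimising sequence is $\bd$-bounded by coercivity, so $\lebe^{1}$-convergence automatically upgrades to weak*-convergence.

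The decisive direction is (a)$\Rightarrow$(c). Given $u\in\gm(\mathfrak{F};u_{0})$, extend to $\widetilde{u}\in\mathscr{A}_{v_{0}}$ and approximate $\widetilde{u}$ on $\ball$ by a sequence $(\widetilde{u}_{k})\subset(\ld\cap\hold^{\infty})(\ball)\cap\mathscr{A}_{v_{0}}$ in the $\bd$-area-strict sense. Then $u_{k}:=\widetilde{u}_{k}|_{\Omega}$ lies in $\mathscr{D}_{u_{0}}$ and converges to $u$ strongly in $\lebe^{1}(\Omega;\R^{n})$; Reshetnyak \emph{continuity} (Proposition \ref{prop:Reshetnyak}(b)) applied to $\widetilde{f}$ and the area-strictly convergent lifted measures $(\mathscr{L}^{n},\E \widetilde{u}_{k})$ gives $\mathfrak{F}[u_{k}]=f[\E \widetilde{u}_{k}](\ball)-C_{0}\to f[\E \widetilde{u}](\ball)-C_{0}=\overline{\mathfrak{F}}_{u_{0}}[u]=\inf\mathfrak{F}[\mathscr{D}_{u_{0}}]$, which is precisely (c). The main obstacle is the construction of the approximants inside $\mathscr{A}_{v_{0}}$: naive mollification of $\widetilde{u}$ destroys the rigid constraint $\widetilde{u}_{k}=v_{0}$ on $\ball\setminus\overline{\Omega}$ (so the restrictions would not lie in $\mathscr{D}_{u_{0}}$), whereas mollifying $u$ on $\Omega$ alone loses the boundary contribution $\int_{\partial\Omega} f^{\infty}((\trace(u_{0})-\trace(u))\odot\nu_{\partial\Omega})\dif\mathcal{H}^{n-1}$ from \eqref{eq:relaxed}. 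The remedy, in the spirit of \textsc{Anzellotti--Giaquinta}, is a locally finite convolution on $\ball$ with kernels whose support shrinks faster than the distance to $\partial\Omega$, blended to $v_{0}$ through a boundary-layer partition of unity; the strict convergence of traces under such mollifications reproduces the initially singular boundary mass in the limit as an absolutely continuous contribution, and the Reshetnyak continuity theorem then delivers the area-strict convergence needed to close the argument.
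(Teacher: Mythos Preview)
Your proposal is correct and follows essentially the same route as the paper: both reduce to the extended problem on a larger ball via \eqref{eq:extension}--\eqref{eq:derivationofsplitting}, obtain existence and the no-gaps identity from Reshetnyak lower semicontinuity together with an area-strict approximation that respects the boundary constraint, and then read off the equivalences from these two ingredients. The paper organises the implications as (a)$\Rightarrow$(b),(c), (b)$\Rightarrow$(c), (c)$\Rightarrow$(a) and simply cites \cite[Chpt.~2.3.1]{Bild1} for the constrained area-strict approximation, whereas you run the cycle (a)$\Rightarrow$(c)$\Rightarrow$(b)$\Rightarrow$(a) and sketch the Anzellotti--Giaquinta-type mollification explicitly; the content is the same.
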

\begin{proof}
We begin with a preparatory remark. We choose an open and bounded Lipschitz subset $\widetilde{\Omega}\subset\R^{n}$ with $\Omega\Subset\widetilde{\Omega}$. Given $u_{0}\in\ld(\Omega)$, by surjectivity of the trace operator on $\ld$ (see Section~\ref{sec:bd}), we may extend $u_{0}$ by some $v_{0}\in\ld(\ball\setminus\overline{\Omega})$ to a function $\widetilde{u_{0}}\in\ld_{0}(\widetilde{\Omega})$. We now invoke the straightforward generalisation of \cite[Chpt.~2.3.1]{Bild1} whose proof we leave to the interested reader:

\emph{
Given $\widetilde{\Omega}$ and $u_{0}$ as above, let $u\in\bd(\Omega)$ and denote its extension to $\widetilde{\Omega}$ via $\widetilde{u_{0}}$ by $\widetilde{u}$. Then there exists $(u_{k})\subset u_{0}+\hold_{c}^{\infty}(\Omega;\R^{n})$ such that $\widetilde{u_{k}}\to \widetilde{u}$ area--strictly in $\widetilde{\Omega}$ as $k\to\infty$, where $\widetilde{u_{k}},\widetilde{u}$ denote the extensions of $u_{k},u$ to $\widetilde{\Omega}$ by $\widetilde{u}$, respectively.}

We turn to the actual proof, and choose $\widetilde{\Omega}\equiv\ball$ as above before \eqref{eq:derivationofsplitting}.

\emph{Step 1. Existence of a generalised minimiser.} 
By \eqref{eq:lg} we have $m:=\inf_{u\in\bd(\Omega)}f[\E\widetilde{u}](\ball)>-\infty$ and so there exists a sequence $(u_{k})\subset\bd(\Omega)$ and $v\in\bd(\ball)$ such that $\widetilde{u_{k}}\stackrel{*}{\rightharpoonup}v$ in $\bd(\ball)$ as $k\to\infty$. By Proposition~\ref{prop:Reshetnyak}(a), $f[\E v](\ball)\leq \liminf_{k\to\infty}f[\E\widetilde{u_{k}}](\ball)=m$. Since $\widetilde{u_{k}}|_{\ball\setminus\overline{\Omega}}=v_{0}$, $v|_{\ball\setminus\overline{\Omega}}=v_{0}$ and so we conclude from \eqref{eq:derivationofsplitting} that $u:=v|_{\Omega}$ is a generalised minimiser in the sense of \eqref{eq:GMdef}. Now, since $\mathscr{D}_{u_{0}}\subset\bd(\Omega)$ and $\overline{\mathfrak{F}}_{u_{0}}|_{\mathscr{D}_{u_{0}}}=\mathfrak{F}|_{\mathscr{D}_{u_{0}}}$, we have $\inf_{\bd(\Omega)}\overline{\mathfrak{F}}_{u_{0}}\leq \inf_{\mathscr{D}_{u_{0}}}\mathfrak{F}$. On the other hand, let $u\in\gm(\mathfrak{F};u_{0})$ and apply the above area-strict approximation strategy to obtain a sequence $(u_{k})\subset u_{0}+\hold_{c}^{\infty}(\Omega;\R^{n})$ such that $\widetilde{u_{k}}\to \widetilde{u}$ area-strictly as $k\to\infty$. Then we obtain by \eqref{prop:Reshetnyak} -- as the ultimate term on the right side of \eqref{eq:derivationofsplitting} is constant -- 
\begin{align}
\begin{split}
\overline{\mathfrak{F}}_{u_{0}}[u] & = f[\E\widetilde{u}](\ball)-\int_{\ball\setminus\overline{\Omega}}f(\mathscr{E}v_{0})\dif x \\ & = \lim_{k\to\infty}f[\E\widetilde{u_{k}}](\ball)-\int_{\ball\setminus\overline{\Omega}}f(\mathscr{E}v_{0})\dif x = \lim_{k\to\infty}\mathfrak{F}[u_{k}] \geq \inf_{\mathscr{D}_{u_{0}}}\mathfrak{F}. 
\end{split}
\end{align}
Altogether, we have therefore established the absence of gaps, i.e.,
\begin{align}\label{eq:nogaps}
\min_{\bd(\Omega)}\overline{\mathfrak{F}}_{u_{0}}=\inf_{\bd(\Omega)}\overline{\mathfrak{F}}_{u_{0}}=\inf_{\mathscr{D}_{u_{0}}}\mathfrak{F}. 
\end{align}
\emph{Step 2. Proof of the claimed equivalences.} 
Ad (a)$\Rightarrow$(b) and (a)$\Rightarrow$(c). Let $u\in\gm(\mathfrak{F};u_{0})$ and choose an area-strictly approximating sequence $(u_{k})\subset u_{0}+\hold_{c}^{\infty}(\Omega;\R^{n})$ as indicated. Then, employing formula \eqref{eq:derivationofsplitting} with $\widetilde{\Omega}\equiv\ball$, we obtain $f[\E\widetilde{u_{k}}](\ball)\to f[\E \widetilde{u}](\ball)$ by virtue of Proposition~\ref{prop:Reshetnyak}. By constancy of the ultimate term in \eqref{eq:derivationofsplitting} and the fact that area-strict convergence implies both $\lebe^{1}$- and weak*-convergence, we conclude by means of \eqref{eq:nogaps}. Ad (b)$\Rightarrow$(c). This follows trivially as weak*-convergence implies strong $\lebe^{1}$-convergence. Ad (c)$\Rightarrow$(a). Let $(u_{k})\subset\mathscr{D}_{u_{0}}$ be an $\mathfrak{F}$-minimising sequence. By \eqref{eq:relaxinequality}, we obtain for all $v\in\bd(\Omega)$ 
\begin{align}
\overline{\mathfrak{F}}_{u_{0}}[u] \leq \liminf_{k\to\infty}\mathfrak{F}[u_{k}] = \inf_{\mathscr{D}_{u_{0}}}\mathfrak{F} \stackrel{\eqref{eq:nogaps}}{=}\min_{\bd(\Omega)}\overline{\mathfrak{F}}_{u_{0}}. 
\end{align}
The proof is complete. 
\end{proof}

\end{document}